\newtheorem{theorem}{Theorem}[section]
\newtheorem{proposition}[theorem]{Proposition}
\newtheorem{corollary}[theorem]{Corollary}
\newtheorem{lemma}[theorem]{Lemma}
\theoremstyle{remark}
\newtheorem{remark}[theorem]{Remark}
\def\N{\mathbb N}
\def\R{\mathbb R}
\def\C{\mathbb C}
\def\Words{\mathbb W}
\def\Prob{\qopname\relax o{\mathbb{P}}}
\def\Expect{\qopname\relax o{\mathbb{E}}}
\def\Unif{\qopname\relax o{\mathsf{Unif}}}
\def\LE{\qopname\relax o{\mathsf{LE}}}
\def\LI{\qopname\relax o{\mathsf{LI}}}
\def\Tr{\qopname\relax o{\mathsf{Tr}}}
\def\PGF{\qopname\relax o{\mathsf{PGF}}}
\def\hit{\mathsf h}
\def\time{\mathsf t}
\def\spend{\mathsf s}
\def\ltime{\mathsf T}
\def\lspend{\mathsf S}
\def\Metrics{\mathbb M}
\def\SigmaAlgebraMetrics{\mathcal M}
\def\abs#1{\lvert#1\rvert}
\def\card#1{\lvert#1\rvert}
\def\norm#1{\lVert#1\rVert}
\def\Re{\qopname\relax o{Re}}
\def\num{{\scriptscriptstyle\#}}
\def\type{\chi}
\let\phi=\varphi
\let\emptyset=\varnothing
\let\emptyword=\emptyset
\def\ndash{\nobreakdash-\hskip0pt}
\def\mytikzglyphsize{0.34} 
\def\mytikzglyphbase{.3pt} 
\tikzstyle{vertex}=[circle, fill=black, inner sep=0pt, minimum width=2pt]
\tikzstyle{dist}=[circle, draw, fill=black!40, inner sep=0pt, minimum width=4pt]
\tikzstyle{cond}=[midway,rectangle,inner sep=1pt,fill=white,midway,font=\tiny]
\def\mytikztrics[#1,#2]{\pgfpoint{((#1)+(#2)/2)*1cm}{(#2)*sqrt(3)/2*1cm}}
\def\mytikztri#1#2{(0:0) #2 #1 (0:1) #2 #1 (60:1) #2}
\def\mytikzscale#1#2#3{%
	\scope[shift={(0:0)},scale=1/2]#1\endscope
	\scope[shift={(0:0.5)},scale=1/2]#2\endscope
	\scope[shift={(60:0.5)},scale=1/2]#3\endscope}
\def\mytikzsg#1#2#3{%
	\def\first{#1}\def\second{#2}
	\ifx\first\second
		#3
	\else
		\mytikzscale{\mytikzsg{#1x}{#2}{#3}}{\mytikzsg{#1x}{#2}{#3}}{\mytikzsg{#1x}{#2}{#3}}
	\fi}
\def\mytikzfill{black!40}
\def\mytikzfilla{\mytikzfill}
\def\mytikzfillb{\mytikzfill}
\def\mytikzfillc{\mytikzfill}
\def\mytikztype#1{%
	\ifcase#1
		\fill[\mytikzfilla] (0,0) -- (1,0) -- (60:1) -- cycle;
	\or
		\fill[\mytikzfilla] (0,0) -- (0.5,0) -- (60:0.5) -- cycle;
		\fill[\mytikzfillb] (0.7,0) -- (1,0) -- (60:1) -- (60:0.7) -- cycle;
	\or
		\fill[\mytikzfilla] (0.5,0) -- (1,0) -- (tri cs:[0.5,0.5]) -- cycle;
		\fill[\mytikzfillb] (0,0) -- (0.3,0) -- (tri cs:[0.3,0.7]) -- (60:1) -- cycle;
	\or
		\fill[\mytikzfilla] (60:0.5) -- (tri cs:[0.5,0.5]) -- (60:1) -- cycle;
		\fill[\mytikzfillb] (0,0) -- (1,0) -- (tri cs:[0.7,0.3]) -- (60:0.3) -- cycle;
	\or
		\fill[\mytikzfilla] (0,0) -- (0.4,0) -- (tri cs:[4/15,4/15]) -- (60:0.4) -- cycle;
		\fill[\mytikzfillb] (1,0) -- (0.6,0) -- (tri cs:[7/15,4/15]) -- (tri cs:[0.6,0.4]) -- cycle;
		\fill[\mytikzfillc] (60:1) -- (60:0.6) -- (tri cs:[4/15,7/15]) -- (tri cs:[0.4,0.6]) -- cycle;
	\or
		\fill[\mytikzfilla] (0,0) -- (1,0) -- (tri cs:[0.7,0.3]) -- (tri cs:[0.3,0.3]) --
					(tri cs:[0.3,0.7]) -- (60:1) -- cycle;
	\or
		\fill[\mytikzfilla] (0,0) -- (1,0) -- (60:1) -- (60:0.7) --
					(tri cs:[0.4,0.3]) -- (60:0.3) -- cycle;
	\or
		\fill[\mytikzfilla] (0,0) -- (0.3,0) -- (tri cs:[0.3,0.4]) -- (0.7,0) --
					(1,0) -- (60:1) -- cycle;
	\fi}
\def\mytikzconn#1{%
	\ifcase#1
	\or
		\fill[black] (0.87,0) -- (1,0) -- (60:1) -- (60:0.87) -- cycle;
	\or
		\fill[black] (0,0) -- (0.13,0) -- (tri cs:[0.13,0.87]) -- (60:1) -- cycle;
	\or
		\fill[black] (0,0) -- (1,0) -- (tri cs:[0.87,0.13]) -- (60:0.13) -- cycle;
	\or
	\or
		\fill[black] (tri cs:[0.87,0.13]) -- (1,0) -- (0.235,0) -- (60:0.235) -- (60:1) --
				(tri cs:[0.13,0.87]) -- (tri cs:[0.13,0.235]) -- (tri cs:[0.235,0.13]) -- cycle;
	\or
		\fill[black] (60:0.13) -- (tri cs:[0.635,0.13]) -- (tri cs:[0.635,0.235]) -- (60:0.87) -- (60:1) --
				(tri cs:[0.765,0.235]) -- (0.765,0) -- (0,0) -- cycle;
	\or
		\fill[black] (0.13,0) -- (tri cs:[0.13,0.635]) -- (tri cs:[0.235,0.635]) -- (0.87,0) -- (1,0) --
				(tri cs:[0.235,0.765]) -- (60:0.765) -- (0,0) -- cycle;
	\fi}
\def\img#1{{\def\mytikzfill{black}\tikzpicture[scale=\mytikzglyphsize,baseline=\mytikzglyphbase]\mytikztype#1\endtikzpicture}}
\def\conn#1#2{\tikzpicture[scale=\mytikzglyphsize,baseline=\mytikzglyphbase]\mytikztype#1\mytikzconn#2\endtikzpicture}
\def\comp#1#2#3#4{%
	\tikzpicture[scale=\mytikzglyphsize,baseline=\mytikzglyphbase]
		\def\cmp{y}
		\def\arg{#2}\ifx\arg\cmp\def\mytikzfilla{black}\fi
		\def\arg{#3}\ifx\arg\cmp\def\mytikzfillb{black}\fi
		\def\arg{#4}\ifx\arg\cmp\def\mytikzfillc{black}\fi
		\mytikztype#1
	\endtikzpicture}
\def\tree#1#2#3#4#5#6{%
	\tikzpicture[scale=\mytikzglyphsize,baseline=\mytikzglyphbase]
		\def\cmp{y}
		\def\arg{#1}\ifx\arg\cmp\def\ca{}\else\def\ca{transparent}\fi
		\def\arg{#2}\ifx\arg\cmp\def\cb{}\else\def\cb{transparent}\fi
		\def\arg{#3}\ifx\arg\cmp\def\cc{}\else\def\cc{transparent}\fi
		\draw (0,0) node[vertex,\ca] {} (1,0) node[vertex,\cb] {} (60:1) node[vertex,\cc] {};
		\def\arg{#4}\ifx\arg\cmp\draw (1,0) {} -- (60:1) {};\fi
		\def\arg{#5}\ifx\arg\cmp\draw (60:1) {} -- (0,0) {};\fi
		\def\arg{#6}\ifx\arg\cmp\draw (0,0) {} -- (1,0) {};\fi
	\endtikzpicture}
\tikzstyle{walk}=[circle, inner sep=0pt, minimum width=4pt]
\def\mywalksize{0.25}
\def\mywalk[#1,#2]#3#4{
	\pgfmathparse{((#1)+(#2)/2)*\mywalksize}\xdef\myxpos{\pgfmathresult}
	\pgfmathparse{(#2)*sqrt(3)/2*\mywalksize}\xdef\myypos{\pgfmathresult}
	\draw[#4,line width=2pt] (\myxpos,\myypos) node[walk,fill=#4] {};
	\foreach \i in {#3} {
		\edef\oldxpos{\myxpos}\edef\oldypos{\myypos}
		\pgfmathparse{\oldxpos+\mywalksize*cos(60*\i)}\xdef\myxpos{\pgfmathresult}
		\pgfmathparse{\oldypos+\mywalksize*sin(60*\i)}\xdef\myypos{\pgfmathresult}
		\draw[#4,line width=2pt] (\oldxpos,\oldypos) -- (\myxpos,\myypos) node[walk,fill=#4] {};
	}}
\def\mytikzg#1#2#3#4{%
	\scope[shift={#1}]
		\mytikzsg{}{#2}{#3}
		\node[above left] at (60:1/2) {#4};
	\endscope}
\def\mytikzscaleb#1#2#3#4#5#6{%
	\scope[shift={(0:0)},scale=1/3]#1\endscope
	\scope[shift={(0:0.33333)},scale=1/3]#2\endscope
	\scope[shift={(60:0.33333)},scale=1/3]#3\endscope
	\scope[shift={(0:0.66667)},scale=1/3]#4\endscope
	\scope[shift={(30:0.57735)},scale=1/3]#5\endscope
	\scope[shift={(60:0.66667)},scale=1/3]#6\endscope}
\def\mytikzsgb#1#2#3{%
	\def\first{#1}\def\second{#2}
	\ifx\first\second
		#3
	\else
		\mytikzscaleb{\mytikzsgb{#1x}{#2}{#3}}{\mytikzsgb{#1x}{#2}{#3}}{\mytikzsgb{#1x}{#2}{#3}}{\mytikzsgb{#1x}{#2}{#3}}{\mytikzsgb{#1x}{#2}{#3}}{\mytikzsgb{#1x}{#2}{#3}}
	\fi}
\def\mytikzscalec#1#2#3#4#5{%
	\scope[shift={(0:0)},scale=1/3]#1\endscope
	\scope[shift={(0:0.33333)},scale=1/3]#2\endscope
	\scope[shift={(0:0.66667)},scale=1/3]#3\endscope
	\scope[shift={(0:0.33333)},scale=1/3,rotate=60]#4\endscope
	\scope[shift={(30:0.57735)},scale=1/3,rotate=300]#5\endscope}
\def\mytikzkoch#1#2#3{%
	\def\first{#1}\def\second{#2}
	\ifx\first\second
		#3
	\else
		\mytikzscalec{\mytikzkoch{#1x}{#2}{#3}}{\mytikzkoch{#1x}{#2}{#3}}{\mytikzkoch{#1x}{#2}{#3}}{\mytikzkoch{#1x}{#2}{#3}}{\mytikzkoch{#1x}{#2}{#3}}{\mytikzkoch{#1x}{#2}{#3}}
	\fi}
\begin{document}

\title{Uniform spanning trees on Sierpi\'nski graphs}

\author{Masato Shinoda}
\address{Masato Shinoda\\
	Department of Mathematics\\
	Nara Women's University\\
	Kitauoya-Nishimachi\\
	Nara city, Nara 630-8506\\
	Japan}
\email{shinoda@cc.nara-wu.ac.jp}

\author{Elmar Teufl}
\address{Elmar Teufl\\
	Mathematisches Institut\\
	Eberhard Karls Universit\"at T\"ubingen\\
	Auf der Morgenstelle 10\\
	72076 T\"ubingen\\
	Germany}
\email{elmar.teufl@uni-tuebingen.de}

\author{Stephan Wagner}
\address{Stephan Wagner\\
	Department of Mathematical Sciences\\
	Stellenbosch University\\
	Private Bag X1\\
	Matieland 7602\\
	South Africa}
\email{swagner@sun.ac.za}

\subjclass[2010]{60C05 (05C05,82C41)}
\keywords{uniform spanning trees, Sierpi\'nski graphs, loop erased random walk, limit behaviour}

\begin{abstract}
We study spanning trees on Sierpi\'nski graphs (i.e., finite approximations to the Sierpi\'nski gasket) that are chosen uniformly at random. We construct a joint probability space for uniform spanning trees on every finite Sierpi\'nski graph and show that this construction gives rise to a multi-type Galton-Watson tree. We derive a number of structural results, for instance on the degree distribution. The connection between uniform spanning trees and loop-erased random walk is then exploited to prove convergence of the latter to a continuous stochastic process. Some geometric properties of this limit process, such as the Hausdorff dimension, are investigated as well. The method is also applicable to other self-similar graphs with a sufficient degree of symmetry.
\end{abstract}

\maketitle

\tableofcontents

\section{Introduction}
\label{section:intro}

The Sierpi\'nski gasket is certainly one of the most famous fractals, and the Sierpi\'n\-ski graphs, which can be seen as finite approximations of the Sierpi\'nski gasket, are among the most thoroughly studied self-similar graphs. The number of spanning trees in the $n$\ndash th Sierpi\'nski graph $G_n$ (starting with a single triangle $G_0$, see Figure~\ref{figure:sg}) turns out to be given by the remarkable explicit formula
\[ \tau(G_n) = \biggl( \frac{3}{20} \biggr)^{1/4} \cdot \biggl( \frac35 \biggr)^{n/2} \cdot 540^{3^n/4}, \]
which was obtained by different methods in several recent works: by setting up and solving a system of recursions \cite{chang2007trees,teufl2006number,teufl2011number}, or by electrical network theory \cite{teufl2011resistance}. In \cite{lawler2010random}, a proof using probabilistic results is sketched. Moreover, the Laplacian spectrum of $G_n$ can be described rather explicitly by means of a technique known as ``spectral decimation'' \cite{fukushima1992spectral,shima1991eigenvalue}, from which another proof can be derived \cite{anema2012counting}.

\begin{figure}[htb]
\centering
\def\fig#1#2#3#4{%
	\scope[shift={#1}]
		\mytikzsg{}{#2}{#3}
		\node[above left] at (60:1/2) {#4};
	\endscope}
\begin{tikzpicture}[scale=2.5]
	\fig{(0,0)}{}{\draw \mytikztri{--}{node[vertex] {}} -- cycle;}{$G_0$}
	\fig{(1.2,0)}{x}{\draw \mytikztri{--}{node[vertex] {}} -- cycle;}{$G_1$}
	\fig{(2.4,0)}{xx}{\draw \mytikztri{--}{node[vertex] {}} -- cycle;}{$G_2$}
	\fig{(4,0)}{xxxxxx}{\fill \mytikztri{--}{} -- cycle;}{$K$}
\end{tikzpicture}
\caption{Sierpi\'nski graphs $G_0$, $G_1$, $G_2$, and the Sierpi\'nski gasket $K$.}
\label{figure:sg}
\end{figure}

Once the counting problem is solved, it is natural to consider \emph{uniformly random spanning trees} of $G_n$ and to study their structure. Uniform spanning trees are known to have strong connections to other probabilistic models, such as loop-erased random walk (Wilson's celebrated algorithm \cite{lawler2010random,wilson1996generating} to construct uniform spanning trees being a particular application), and they are also of interest in mathematical physics. For this reason, the structure of uniformly random spanning trees in other important families of graphs, such as square grids \cite{burton1993local} has been studied thoroughly.

The recursive nature of Sierpi\'nski graphs and the strong symmetry enables us to derive a number of results on uniform spanning trees, as will be shown in this paper. After some preliminaries, we construct a joint probability space for uniform spanning trees on every finite Sierpi\'nski graph. An important tool in this context is the theory of (a rather general kind of) Galton-Watson processes. Making use of this tool, we also prove some structural results on uniform spanning trees of $G_n$, for instance a strong law of large numbers for the degree distribution of a uniform spanning tree. This extends the work of Chang and Chen \cite{chang2010structure}, who prove convergence of expected values (for which they also give explicit formulae). Similar results for the two-dimensional square lattice were obtained by Manna, Dhar and Majumdar \cite{manna1992spanning}.

\emph{Loop-erased random walk} on the Sierpi\'nski gasket was studied in the paper of Hattori and Mizuno \cite{hattori2012looperased}; our results on uniform spanning trees provide an alternative approach to this topic and were obtained independently of Hattori and Mizuno and approximately at the time (see for instance \cite{teufl2011uniform}). The expected length of such a walk from one corner to another was studied earlier in the physics literature by Dhar and Dhar \cite{dhar1997distribution}; it grows asymptotically like $\bigl(\frac43 + \frac1{15}\sqrt{205}\,\bigr)^n$. As it was also shown by Hattori and Mizuno, we find that, upon renormalization, loop-erased random walk converges to a limit process. The analogue of this process for the square lattice is the celebrated Schramm-Loewner evolution \cite{schramm2000scaling,lawler2004conformal}, whose analysis is notoriously complicated. However, the different geometry of the Sierpi\'nski graphs makes it possible to prove rather strong theorems on the shape of this limiting process comparatively easily, including parameters such as the \emph{Hausdorff dimension}. Similar results on the limit process of the self-avoiding walk were obtained by Hattori, Hattori and Kusuoka \cite{hattori1991selfavoiding,hattori1992exponent,hattori1990selfavoiding,hattori1993selfavoiding} and by Hambly, Hattori and Hattori \cite{hambly2002selfrepelling} for the self-repelling walk.

In Section~\ref{sec:metric}, we study the metric induced by a random spanning tree on the Sierpi\'nski graph $G_n$. We prove almost sure convergence to a limit metric, and show that the resulting metric space is a so-called $\R$\ndash tree. We also study the \emph{interface}, which is, loosely speaking, the set where different branches of a spanning tree embedded in the plane ``touch'', and estimate its Hausdorff dimension.

In the following list, the main results of this paper are summarised. For the sake of simplicity, all results and their derivation are only given for the Sierpi\'nski gasket, but there are other fractals to which the same approach applies, see Section~\ref{sec:other}.

\begin{itemize}
\item We construct a joint probability space for uniform spanning trees
	on every finite Sierpi\'nski graph using a projective limit.
	As part of the construction, we also have to consider
	spanning forests with the property
	that each of the components contains one of the three corner vertices.
	We show that the distribution of the component sizes
	in random spanning forests of this type converges (upon renormalisation)
	to a limiting distribution---see Section~\ref{subsection:component}.
\item We prove almost sure convergence of the \emph{degree distribution}
	(see Section~\ref{subsection:degree}):
	the proportion of vertices of degree $i$ ($i \in \{1,2,3,4\}$ fixed)
	in a random spanning tree of $G_n$ converges almost surely
	to a limit constant $w(i)$.
\item Section~\ref{section:lerw} is concerned with loop erased random walk
	on Sierpi\'nski graphs $G_n$:
	using the connection between spanning trees and
	loop-erased random walk, we recover the aforementioned result
	that the length of such a walk from one corner to another grows
	asymptotically like $\bigl(\frac43 + \frac1{15}\sqrt{205}\,\bigr)^n$,
	and that the renormalised length has a limit distribution
	(cf.~\cite[Theorem~5]{hattori2012looperased}).
	We also provide tail estimates for this limit distribution,
	see Lemma~\ref{lemma:mg-bounds}.
\item In Section~\ref{section:convergence}, we study the limit process and
	prove some geometric properties:
	specifically, we show that the limit curve is
	almost surely self-avoiding (Theorem~\ref{theorem:prop}),
	and has Hausdorff dimension
	$\log_2 \bigl(\frac43 + \frac1{15}\sqrt{205}\,\bigr) \approx 1.193995$
	(Theorem~\ref{theorem:prop-lerw}, (5)).
	These results were also obtained in the aforementioned paper
	of Hattori and Mizuno (see \cite[Theorems 9 and 10]{hattori2012looperased}).
	Moreover, we prove H\"older continuity with an
	explicit exponent (Theorem~\ref{theorem:prop-lerw}, (4)).
\item The limit of the tree metric is the main topic of Section~\ref{sec:metric}.
	It is shown (Theorem~\ref{theorem:randommetric}) that
	we almost surely obtain a random metric on the ``rational points''
	(i.e., all points which are vertices in some finite approximation)
	of the Sierpinski gasket whose Cauchy completion is an $\R$\ndash tree,
	i.e., a metric space in which there is a unique arc between any two points and
	this arc is geodesic (that is an isometric embedding of a real interval).
\end{itemize}

\section{Notation and Preliminaries}
\label{section:notation}

A \emph{graph} $G$ is a pair $(VG,EG)$, where $VG$ is the vertex set and
\[ EG \subseteq \bigl\{ \{x,y\} \,:\, x,y\in VG, x\ne y \bigr\} \]
is the edge set. Two vertices $x,y$ are \emph{adjacent} if $\{x,y\}\in EG$. The \emph{degree} $\deg x$ of a vertex $x$ is the number of adjacent vertices. A \emph{walk} in $G$ is a finite or infinite sequence $(x_0,x_1,\dotsc)$ of vertices in $G$, such that consecutive entries are adjacent. A walk is called \emph{self-avoiding} if its entries are mutually distinct. The edge set $E(x)$ of a walk $x=(x_0,x_1,\dotsc)$ is the set
\[ E(x) = \bigl\{ \{x_0,x_1\}, \{x_1,x_2\}, \dotsc \bigr\}. \]
Equipped with the edge set $E(x)$ a walk $x$ gives rise to a subgraph of $G$. The \emph{length} of the walk $(x_0,\dotsc,x_n)$ is equal to $n$, the number of edges. The \emph{distance} $d_G(v,w)$ of two vertices $v,w$ is the least integer $n$ such that there is a walk of length $n$ in $G$ connecting $v$ and $w$.

A \emph{tree} is a connected graph without cycles. A \emph{spanning tree} of a graph $G$ is a subgraph of $G$ which is a tree and contains all vertices of $G$. Similarly, a \emph{forest} is a graph without cycles and a \emph{spanning forest} of a graph $G$ is a subgraph of $G$ which is a forest and contains all vertices of $G$. Let $F$ be a forest and $v,w$ be two vertices in $F$. If $v,w$ are in the same component of $F$, then we write $vFw$ to denote the unique self-avoiding walk in $F$ connecting $v$ and $w$.

Next we need some ingredients from probability theory. We use multi-index notation: If $r\in\N$, $\mathbold z=(z_1,\dotsc,z_r)$, and $\mathbold k=(k_1,\dotsc,k_r)$, then $\mathbold z^\mathbold k = z_1^{k_1} \dotsm z_r^{k_r}$. If $\mathbold X=(X_1,\dotsc,X_r)$ is a random vector in $\N_0^r$, then
\[ \PGF(\mathbold X, \mathbold z)
	= \Expect\bigl(\mathbold z^{\mathbold X}\bigr)
	= \sum_{\mathbold k\in\N_0^r} \Prob(\mathbold X=\mathbold k) \mathbold z^\mathbold k \]
is the (multivariate) \emph{probability generating function} of $\mathbold X$.

An \emph{$r$\ndash type Galton-Watson process} $(\mathbold X_n)_{n\ge0} = (X_{1,n},\dotsc,X_{r,n})_{n\ge0}$ is a stochastic process that starts with one or more individuals, each of which has a type associated to it. Each individual gives birth to zero or more children according to the offspring probabilities
\[ \mathbold p(\mathbold k) = \bigl( p_1(\mathbold k), \dotsc, p_r(\mathbold k) \bigr) \]
for $\mathbold k=(k_1,\dotsc,k_r)\in\N_0^r$. Here $p_i(\mathbold k)$ is the probability that an individual of type $i$ has $k_j$ children of type $j$ for $j\in\{1,\dotsc,r\}$. The vector $\mathbold X_n$ represents the number of individuals in the $n$\ndash th generation by their type (i.e., $X_{i,n}$ is the number of individuals of type $i$ in the $n$\ndash th generation). It is convenient to describe the offspring distributions by their multidimensional multivariate probability generating function $\mathbold f=(f_1,\dotsc,f_r)$, which is called \emph{offspring generating function} and given by
\[ \mathbold f(\mathbold z) = \sum_{\mathbold k\in\N_0^r} \mathbold p(\mathbold k) \mathbold z^\mathbold k \]
for $\mathbold z=(z_1,\dotsc,z_r)$. Then
\[ \PGF(\mathbold X_n,\mathbold z)
	= \PGF(\mathbold X_{n-1}, \mathbold f(\mathbold z)) = \dotsb
	= \PGF(\mathbold X_0, \mathbold f^n(\mathbold z)), \]
where $\mathbold f^n$ is the $n$\ndash fold iteration of $\mathbold f$. The \emph{mean matrix} $\mathbold M = (m_{ij})_{1\le i,j\le r}$ is given by $m_{ij} = (\partial f_i/\partial z_j)(\mathbf 1)$. The process is called
\begin{itemize}
\item \emph{positively regular} if $\mathbold M$ is primitive
	(i.e., all entries of $\mathbold M^k$ are positive for some $k$),
\item \emph{singular} if $\mathbold f(\mathbold z) = \mathbold M \mathbold z$,
\item \emph{subcritical}, \emph{critical} or \emph{supercritical} depending on whether the largest eigenvalue of the mean matrix is less than, equal to, or greater than $1$.
\end{itemize}
see for instance \cite{mode1971multitype} for these notions and the theory of multi-type Galton-Watson processes.

Let $\Words$ be a subset of $\N$ (in the following $\Words$ will always be $\{1,2,3\}$); elements of $\Words^n$ are written as words over the alphabet $\Words$, e.g.\ $12133$ means $(1,2,1,3,3)$. Let $\Words^0 = \{\emptyword\}$ consist of the empty word $\emptyword$ only and set
\[ \Words^* = \biguplus_{n\ge0} \Words^n. \]
Concatenation of two words $v,w\in\Words^*$ is written by juxtaposition $vw$. The set $\Words^*$ carries a graph structure in a canonical way: two words $v,w\in\Words^*$ are adjacent if $w=v\iota$ or $v=w\iota$ for some $\iota\in\Words$. This turns $\Words^*$ into a tree with root $\emptyword$. If $w=v\iota$, then $w$ is called \emph{child} or \emph{offspring} of $v$ and $\iota$ is the \emph{suffix} of $w$.

Let $R$ be a finite set and fix some $\Words\subseteq\N$. Consider the set of all subtrees with an element of $R$ associated to each vertex, i.e.,
\[ \mathcal W_R = \bigl\{ (W,f) \,:\,
	W\subseteq\Words^* \text{ induces a subtree},
	\emptyword\in W,
	f=(f_w)_{w\in W}\in R^W \bigr\}. \]
If $(W,f)\in\mathcal W_R$ and $w\in W$, then we say that the word $w$ is the \emph{label} and $f_w$ is the \emph{type} of the pair $(w,f_w)$. A \emph{labelled multi-type Galton-Watson tree} with labels in $\Words^*$ and types in $R$ is a random element of the set $\mathcal W_R$, whose distribution is uniquely determined by the following:
\begin{itemize}
\item The type of the root (or \emph{ancestor}) $\emptyword$ is given by a fixed distribution on $R$.
\item The random offspring generation of a vertex (or \emph{individual}) $w$ only depends on the type of $w$.
	It is given by a probability distribution (depending on the type of $w$) on the set
	\[ \bigl\{ (S,g) \,:\, S\subseteq\Words, g=(g_\iota)_{\iota\in S}\in R^S \bigr\}. \]
	The interpretation is that once a pair $(S,g)$ is chosen,
	the individual $w$ gives birth to $\card{S}$ children
	with labels $w\iota$ for $\iota\in S$, and type $g_\iota$ is assigned to child $w\iota$.
\end{itemize}
A labelled multi-type Galton-Watson tree is denoted by $(F_w)_{w\in W}\in\mathcal W_R$. Notice that in this notation $W\subseteq\Words^*$ is the random set of individuals and $F_w$ is the random type of an individual $w\in W$.

To every labelled multi-type Galton-Watson tree $(F_w)_{w\in W}$ with labels in $\Words^*$ and types in $R$, the (random) number of individuals of a certain type in the $n$\ndash th generation yields a multi-type Galton-Watson process with $r=\card{R}$ types. To this end, let $a_1,\dotsc,a_r$ be the elements of $R$ and set
\[ X_{i,n} = \card{\{ w\in W\cap\Words^n \,:\, F_w = a_i \}}, \qquad
	\mathbold X_n = (X_{1,n},\dotsc,X_{r,n}) \]
for $n\ge0$ and $i\in\{1,\dotsc,r\}$. Then $(\mathbold X_n)_{n\ge0}$ is an $r$\ndash type Galton-Watson processes.

\section{Construction of Sierpi\'nski graphs}
\label{section:sg}

The Sierpi\'nski gasket $K$ (see \cite{sierpinski1915courbe} for its origin in mathematical literature) can be defined formally by means of the following three similitudes:
\[ \psi_i(x) = \tfrac12(x-u_i) + u_i \]
for $i\in\{1,2,3\}$, where $u_1=(0,0)$, $u_2=(1,0)$, and $u_3=\tfrac12(1,\sqrt3)$. Then $K$ is the unique non-empty compact set such that
\[ K = \psi_1(K) \cup \psi_2(K) \cup \psi_3(K). \]
Its Hausdorff dimension is given by
\[ \operatorname{dim}_H K = \frac{\log3}{\log2} = 1.5849625\dotsc \]
The Sierpi\'nski graphs $G_0, G_1, \dotsc$ are discrete approximations to $K$ and are constructed inductively: The vertex set $VG_0$ and edge set $EG_0$ of $G_0$ are given by
\[ VG_0 = \{u_1, u_2, u_3\} \qquad\text{and}\qquad EG_0 = \{\{u_1,u_2\},\{u_2,u_3\},\{u_3,u_1\}\}, \]
respectively. Then, for any $n\ge0$, the sets $VG_{n+1}$ and $EG_{n+1}$ are defined as follows:
\begin{align*}
VG_{n+1} &= \psi_1(VG_n) \cup \psi_2(VG_n) \cup \psi_3(VG_n), \\
EG_{n+1} &= \psi_1(EG_n) \cup \psi_2(EG_n) \cup \psi_3(EG_n).
\end{align*}
Notice that $G_{n+1}$ is an amalgam of three scaled images of $G_n$, which we denote by $\psi_1(G_n)$, $\psi_2(G_n)$, and $\psi_3(G_n)$, i.e.,
\[ G_{n+1} = \psi_1(G_n) \cup \psi_2(G_n) \cup \psi_3(G_n). \]
The vertices in $VG_0\subset VG_n$ are often called \emph{corner vertices} or \emph{boundary vertices} of the graph $G_n$. The vertex sets are nested, i.e., $VG_0\subset VG_1\subset VG_2 \subset \dotsb$, and the Sierpi\'nski gasket $K$ is the closure of the union $VG_0\cup VG_1\cup VG_2\dotsb$. Figure~\ref{figure:sg} shows the graphs $G_0$, $G_1$, $G_2$ and the Sierpi\'nski gasket $K$. The self-similar nature and the fact that the three scaled images only intersect in the three points
\[ \tfrac12(u_2+u_3), \qquad \tfrac12(u_3+u_1), \qquad \tfrac12(u_1+u_2), \]
allows to solve many problems concerning the Sierpi\'nski gasket and Sierpi\'nski graphs exactly.

As explained above, we may view $G_n$ as an amalgam of three copies of $G_{n-1}$. More generally, we may consider $G_n$ as an amalgam of $3^{n-k}$ copies of $G_k$ ($0\le k\le n$). For any word $w=w_1\dotsb w_n\in\Words^n$ ($n\ge1$), set $\psi_w = \psi_{w_1} \circ \dotsb \circ \psi_{w_n}$, and let $\psi_\emptyword$ be the identity map. Then, for $0\le k\le n$,
\[ G_n = \bigcup_{w\in\Words^k} \psi_w(G_{n-k}). \]
If $w\in\Words^k$, we call $\psi_w(G_{n-k})$ (respectively $\psi_w(K)$) a \emph{$k$\ndash part} of $G_n$ (respectively $K$). Note that the $k$\ndash parts are in one-to-one correspondence with the words in $\Words^k$. For any word $w\in\Words^k$ and any subgraph $H\subseteq G_n$ the \emph{restriction} $\pi_w(H)$ is the subgraph of $G_{n-k}$ given by
\[ \pi_w(H) = \psi_w^{-1}(H\cap\psi_w(G_{n-k})). \]

\section{Spanning trees on Sierpi\'nski graphs}
\label{section:trees}

For the sake of completeness we reproduce the computation of the number of spanning trees of the Sierpi\'nski graphs following the approach given in \cite{chang2007trees,teufl2006number,teufl2011number}. Using a decomposition of certain spanning forests a recursion for the number of spanning trees and two other quantities is derived. In the physics literature this approach is often called the renormalization group. We write
\begin{itemize}
\item $\mathcal T_n$ to denote the set of spanning trees of $G_n$,
\item $\mathcal S_n^i$ ($i\in\{1,2,3\}$) to denote
	the set of spanning forests in $G_n$ with two connected components,
	so that one component contains $u_i$ and the other contains $VG_0\setminus\{u_i\}$, and
\item $\mathcal R_n$ to denote the set of spanning forests of $G_n$ with three connected components,
	each of which contains exactly one vertex from the set $VG_0$.
\end{itemize}
By symmetry, the sets $\mathcal S_n^1$, $\mathcal S_n^2$, and $\mathcal S_n^3$ all have the same cardinality. For notational convenience, we set
\[ \mathcal Q_n = \mathcal T_n \uplus \mathcal S_n^1 \uplus \mathcal S_n^2 \uplus \mathcal S_n^3 \uplus \mathcal R_n. \]
The crucial observation is that the restriction of a spanning forest in $\mathcal Q_{n+1}$ to one of $\psi_1(G_n)$, $\psi_2(G_n)$, or $\psi_3(G_n)$ can be identified with a spanning forest in $\mathcal Q_n$. If $f\in\mathcal Q_{n+1}$, then $\pi_1(f), \pi_2(f), \allowbreak \pi_3(f) \in \mathcal Q_n$ and
\begin{equation}\label{eq:restrict}
f = \psi_1(\pi_1(f)) \cup \psi_2(\pi_2(f)) \cup \psi_3(\pi_3(f)).
\end{equation}
Here and in the following we use lowercase letters for elements of $\mathcal Q_n$ and capital letters for random elements of $\mathcal Q_n$. Since $\mathcal T_0$ consists of the three elements \tree yyynyy, \tree yyyyny, \tree yyyyyn, whereas $\card{\mathcal S_0^i}=\card{\mathcal R_0^{}}=1$ for $i\in\{1,2,3\}$, the subdivision of $\mathcal T_n$ into three families of equal size turns out to be advantageous. In the following we describe one subdivision which is convenient and induced by symmetry. Set
\[ \mathcal T_0^1 = \{\tree yyynyy\}, \qquad
	\mathcal T_0^2 = \{\tree yyyyny\}, \qquad
	\mathcal T_0^3 = \{\tree yyyyyn\}, \]
and in general, for $n\ge1$ and $i\in\{1,2,3\}$,
\[ \mathcal T_n^i = \bigl\{ t\in\mathcal T_n \,:\,
	u_itu_j \subseteq \psi_i(G_{n-1})\cup\psi_j(G_{n-1}) \text{ for all } j \in \{1,2,3\}\setminus\{i\} \bigr\}. \]
Here we consider the self-avoiding walk $u_itu_j$ as the subgraph consisting of the vertices and the edges connecting consecutive vertices. In words, $\mathcal T_n^i$ is the set of spanning trees with the property that the unique paths from $u_i$ to the other corner vertices $u_j$, $j \neq i$, only pass through $\psi_i(G_{n-1})$ and $\psi_j(G_{n-1})$ and do not ``make a detour''. Then
\[ \mathcal T_n^{} = \mathcal T_n^1 \uplus \mathcal T_n^2 \uplus \mathcal T_n^3 \]
and $\card{\mathcal T_n^{}} = 3\,\card{\mathcal T_n^i}$ for $i\in\{1,2,3\}$. Define
\[ \tau_n = \card{\mathcal T_n^1} = \card{\mathcal T_n^2} = \card{\mathcal T_n^3}, \qquad
	\sigma_n = \card{\mathcal S_n^1} = \card{\mathcal S_n^2} = \card{\mathcal S_n^3}, \qquad
	\rho_n = \card{\mathcal R_n^{}}. \]

\begin{lemma}\label{lemma:count}
If $n\ge0$, then
\begin{align*}
\tau_{n+1}   &= 18 \tau_n^2 \sigma_n, \\
\sigma_{n+1} &= 21 \tau_n \sigma_n^2 + 9 \tau_n^2 \rho_n, \\
\rho_{n+1}   &= 14 \sigma_n^3 + 36 \tau_n \sigma_n \rho_n,
\end{align*}
and
\begin{align*}
\tau_n   &= \bigl(\tfrac53\bigr)^{-n/2} \, 540^{(3^n-1)/4}, \\
\sigma_n &= \bigl(\tfrac53\bigr)^{ n/2} \, 540^{(3^n-1)/4}, \\
\rho_n   &= \bigl(\tfrac53\bigr)^{3n/2} \, 540^{(3^n-1)/4}.
\end{align*}
\end{lemma}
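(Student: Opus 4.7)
My plan is to derive the three recursions via a single combinatorial decomposition and then solve them in closed form.

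The recursions follow from \eqref{eq:restrict}: every element $f\in\mathcal Q_{n+1}$ is determined by the triple of restrictions $(\pi_1(f),\pi_2(f),\pi_3(f))\in\mathcal Q_n^3$, and conversely the gluing $\psi_1(f_1)\cup\psi_2(f_2)\cup\psi_3(f_3)$ is always a spanning subgraph of $G_{n+1}$. To track when it lies in $\mathcal Q_{n+1}$ and in which of its strata, I introduce an auxiliary multigraph $Q=Q(f_1,f_2,f_3)$ whose vertex set is the disjoint union of all connected components of $f_1,f_2,f_3$ and whose three edges correspond to the three shared points $\psi_j(u_k)=\psi_k(u_j)$, each joining the component of $f_j$ and the component of $f_k$ in which it lies. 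Since the three subcopies $\psi_j(G_n)$ meet only in those three points, the glued subgraph is a forest iff $Q$ is a forest, its components correspond canonically to those of $Q$, and each corner $u_i$ sits in a unique vertex of $Q$.

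Since $c(f_j)\in\{1,2,3\}$ according as $f_j\in\mathcal T_n,\mathcal S_n^\bullet,\mathcal R_n$, and $Q$ has exactly $3$ edges, membership of the gluing in $\mathcal T_{n+1}$, $\mathcal S_{n+1}^i$, or $\mathcal R_{n+1}$ forces $c(f_1)+c(f_2)+c(f_3)$ to equal $4$, $5$, or $6$ respectively. This narrows the possible type-triples to the multisets $\{1,1,2\}$; $\{1,1,3\}$ and $\{1,2,2\}$; and $\{1,2,3\}$ and $\{2,2,2\}$. For each such multiset I perform a finite case analysis over the subtype choices (which $\mathcal S_n^i$ for each $\mathcal S$-summand) and the assignment of types to subcopies, under two constraints: $Q$ is acyclic, and every component of the gluing contains a corner (so the result actually lies in $\mathcal Q_{n+1}$). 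The $S_3$-symmetry permuting the three subcopies cuts the bookkeeping considerably, and the resulting counts deliver the three recursions.

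To pass to the closed forms, I plan to establish the multiplicative identity $\tau_n\rho_n=\sigma_n^2$ by induction: it holds trivially at $n=0$, and substituting the recursions together with the induction hypothesis shows that both $\tau_{n+1}\rho_{n+1}$ and $\sigma_{n+1}^2$ reduce to $900\,\tau_n^2\sigma_n^4$. This invariant collapses the $\sigma$- and $\rho$-recursions to $\sigma_{n+1}=30\,\tau_n\sigma_n^2$ and $\rho_{n+1}=50\,\sigma_n^3$. Setting $P_n:=\tau_n\sigma_n$ gives $P_{n+1}=540\,P_n^3$ with $P_0=1$, so $P_n=540^{(3^n-1)/2}$; setting $R_n:=\sigma_n/\tau_n$ gives $R_{n+1}=\tfrac53 R_n$ with $R_0=1$, so $R_n=(5/3)^n$. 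Extracting $\tau_n=\sqrt{P_n/R_n}$ and $\sigma_n=\sqrt{P_nR_n}$ and using $\rho_n=\sigma_n^2/\tau_n$ yields the stated formulas.

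I expect the main obstacle to be the case analysis for $\sigma_{n+1}$ under the $\{1,2,2\}$ distribution. Beyond discarding subtype combinations that make $Q$ contain a triangle (hence produce a gluing with a cycle), one must also exclude configurations where $Q$ is a forest with the right vertex count but a component of the gluing lacks any corner—for instance, when all three corners coalesce into one component while a stray internal component is isolated. Carefully accounting for these exclusions is what reduces the naive count to the coefficient $21$ (rather than $24$) in the first term of the $\sigma_{n+1}$-recursion, and produces the coefficient $14$ in the $\rho_{n+1}$-recursion.
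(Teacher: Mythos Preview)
Your derivation of the recursions is essentially the paper's approach: both rest on the decomposition~\eqref{eq:restrict} and a finite case analysis over the types of the three restrictions, which the paper summarises pictorially in Figure~\ref{figure:rectree}. Your auxiliary multigraph $Q$ is a clean way to formalise that case analysis, but the content is the same.

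Where you genuinely diverge is in obtaining the closed forms. The paper simply verifies them by induction: plug the stated expressions into the three recursions and check they are reproduced. You instead \emph{derive} the formulas, first isolating the multiplicative invariant $\tau_n\rho_n=\sigma_n^2$ (which collapses the $\sigma$- and $\rho$-recursions to $\sigma_{n+1}=30\,\tau_n\sigma_n^2$ and $\rho_{n+1}=50\,\sigma_n^3$), then diagonalising via $P_n=\tau_n\sigma_n$ and $R_n=\sigma_n/\tau_n$ to decouple the system into two independent one-step recursions. This is more informative: it explains \emph{why} the factors $540=18\cdot30$ and $5/3=30/18$ appear, and it makes the invariant $\tau_n\rho_n=\sigma_n^2$ explicit (a relation the paper later exploits implicitly when computing the probabilities in~\eqref{eq:probs}). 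The paper's verification is of course shorter and entirely adequate once the answer is known.
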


\begin{proof}
The recursion satisfied by $\tau_n$, $\sigma_n$, $\rho_n$ follows from the decomposition \eqref{eq:restrict}. For a graphical explanation of the specific terms, see Figure~\ref{figure:rectree}. The initial values are $(\tau_0,\sigma_0,\rho_0)=(1,1,1)$, and using induction, it is easy to verify that the formulae stated in the lemma are indeed the explicit solution to the recursion.
\end{proof}

\begin{figure}[htb]
\centering
\def\figscale{1.3}
\def\fig#1#2#3#4#5{%
	\scope[shift={#1}]
		\mytikzscale{\mytikztype#3}{\mytikztype#4}{\mytikztype#5}
		\mytikzsg{}{x}{\draw \mytikztri{}{node[vertex] {}};}
		\node[right,font=\footnotesize] at (tri cs:[0.5,0.5]) {#2};
	\endscope}
\begin{tikzpicture}[scale=\figscale]
	\scope[shift={(0,0)}]
		\fig{(0.5,0)}{$\times 2$}200
		\draw (1.0,-0.05) node[anchor=base]
			{$\underbrace{\hbox{\pgfmathparse{\figscale * 1.0 + 0.4}\hskip\pgfmathresult cm}}$};
		\draw (1.0,-0.6) node[anchor=base] {$2\cdot(3\tau_n)^2 \sigma_n$};

		\draw (0,-0.6) node[anchor=base east] {$\tau_{n+1} = $};
	\endscope

	\scope[shift={(0,-2)}]
		\fig{(0.0,0)}{$\times 2$}031
		\fig{(1.2,0)}{$\times 2$}013
		\fig{(2.4,0)}{$\times 2$}033
		\fig{(3.6,0)}{$\times 1$}330
		\draw (2.3,-0.05) node[anchor=base]
			{$\underbrace{\hbox{\pgfmathparse{\figscale * 4.6 + 0.4}\hskip\pgfmathresult cm}}$};
		\draw (2.3,-0.6) node[anchor=base] {$7\cdot(3\tau_n) \sigma_n^2$};

		\draw (5.0,-0.6) node[anchor=base] {$+$};

		\fig{(5.4,0)}{$\times 1$}004
		\draw (5.9,-0.05) node[anchor=base]
			{$\underbrace{\hbox{\pgfmathparse{\figscale * 1 + 0.4}\hskip\pgfmathresult cm}}$};
		\draw (5.9,-0.6) node[anchor=base] {$(3\tau_n)^2 \rho_n$};

		\draw (0,-0.6) node[anchor=base east] {$\sigma_{n+1} = $};
	\endscope

	\scope[shift={(0,-4)}]
		\fig{(0.0,0)}{$\times 6$}323
		\fig{(1.2,0)}{$\times 6$}322
		\fig{(2.4,0)}{$\times 2$}312
		\draw (1.7,-0.05) node[anchor=base]
			{$\underbrace{\hbox{\pgfmathparse{\figscale * 3.4 + 0.4}\hskip\pgfmathresult cm}}$};
		\draw (1.7,-0.6) node[anchor=base] {$14\cdot\sigma_n^3$};

		\draw (3.8,-0.6) node[anchor=base] {$+$};

		\fig{(4.2,0)}{$\times 6$}024
		\fig{(5.4,0)}{$\times 6$}014
		\draw (5.3,-0.05) node[anchor=base]
			{$\underbrace{\hbox{\pgfmathparse{\figscale * 2.2 + 0.4}\hskip\pgfmathresult cm}}$};
		\draw (5.3,-0.6) node[anchor=base] {$12\cdot(3\tau_n) \sigma_n \rho_n$};

		\draw (0,-0.6) node[anchor=base east] {$\rho_{n+1} = $};
	\endscope
\end{tikzpicture}
\caption{All arrangements (up to symmetry) for the construction of spanning trees and spanning forests
	used in the recursion for $\tau_n$, $\sigma_n$, and $\rho_n$.
	Shaded area indicates connected parts.}
\label{figure:rectree}
\end{figure}

We define the \emph{trace} $\Tr f\in\mathcal Q_n$ of a spanning forest $f\in\mathcal Q_{n+1}$ as follows: For $f\in\mathcal Q_1$, the trace is given in Table~\ref{table:trace}.

\begin{table}[htb]
\caption{Traces of spanning forests in $\mathcal Q_1$.}
\label{table:trace}
\centering
\begin{tabular}{@{}*{8}{c}@{}}
\toprule
$M$ & $\mathcal T_1^1$ & $\mathcal T_1^2$ & $\mathcal T_1^3$ %
    & $\mathcal S_1^1$ & $\mathcal S_1^2$ & $\mathcal S_1^3$ & $\mathcal R_1^{}$ \\
\midrule
$\Tr f$ for $f\in M$ %
    & \tree yyynyy & \tree yyyyny & \tree yyyyyn %
    & \tree yyyynn & \tree yyynyn & \tree yyynny & \tree yyynnn \\
\bottomrule
\end{tabular}
\end{table}

If $n>0$ and $f\in\mathcal Q_{n+1}$, then consider the $3^n$ $n$\ndash parts of $G_{n+1}$, which are isomorphic to $G_1$. On each of these parts $f$ induces (up to scaling and translation) a forest on $\mathcal Q_1$. In order to obtain the trace $\Tr f$, replace each of these small forests by their respective trace:
\[ \Tr f = \bigcup_{w\in\Words^n} \psi_w(\Tr \pi_w(f)). \]
Note that $\Tr$ maps $\mathcal T_{n+1}^i$ onto $\mathcal T_n^i$, $\mathcal S_{n+1}^i$ onto $\mathcal S_n^i$ ($i\in\{1,2,3\}$), and $\mathcal R_{n+1}^{}$ onto $\mathcal R_n^{}$. In order to emphasize the dependence on $n$, we write $\Tr^{n+1}_n t$ instead of $\Tr f$ if $f\in\mathcal Q_{n+1}$. For $m\ge n$ define $\Tr^m_n = \Tr^{n+1}_{n} \circ \dotsb \circ \Tr^m_{m-1}$. Then
\begin{align*}
\mathcal T_n^1 &= \bigl\{ t\in\mathcal T_n \,:\, \Tr^n_0 t = \tree yyynyy \bigr\}, \\
\mathcal T_n^2 &= \bigl\{ t\in\mathcal T_n \,:\, \Tr^n_0 t = \tree yyyyny \bigr\}, \\
\mathcal T_n^3 &= \bigl\{ t\in\mathcal T_n \,:\, \Tr^n_0 t = \tree yyyyyn \bigr\}.
\end{align*}

\begin{figure}[htb]
\centering
\begin{tikzpicture}[scale=3]
	\scope[shift={(0.0,0)}]
		\mytikzg{(0,0)}{xx}{\draw[dashed] \mytikztri{--}{node[vertex] {}} -- cycle;}{}
		\mywalk[0,0]{1,0,4,0,0,0}{black}
		\mywalk[1,1]{2,0,1,2,4}{black}
		\mywalk[1,3]{5,5}{black}\mywalk[2,2]{4}{black}
	\endscope
	\scope[shift={(1.4,0)}]
		\mytikzg{(0,0)}{x}{\draw[dashed] \mytikztri{--}{node[vertex] {}} -- cycle;}{}
		\def\mywalksize{0.5}
		\mywalk[0,0]{1,5,0}{black}\mywalk[0,1]{0,2}{black}
	\endscope
	\scope[shift={(2.8,0)}]
		\mytikzg{(0,0)}{}{\draw[dashed] \mytikztri{--}{node[vertex] {}} -- cycle;}{}
		\def\mywalksize{1}\mywalk[1,0]{3,1}{black}
	\endscope
	\draw (1.2,0.475) node {$\stackrel{\!\Tr}{\longmapsto}$};
	\draw (2.6,0.475) node {$\stackrel{\!\Tr}{\longmapsto}$};
\end{tikzpicture}
\caption{A spanning tree $t$ on $G_2$ and the traces $\Tr t = \Tr_1^2 t$ and $\Tr_0^2 t$.}
\label{figure:trace}
\end{figure}

Figure~\ref{figure:trace} shows a spanning tree on $G_2$ and its traces on $G_1$ and $G_0$. The importance of the trace stems from the fact that $(\mathcal Q_n, \Tr^m_n)$ is a projective system. Hence we can define $\mathcal Q_\infty = \varprojlim \mathcal Q_n$ and write $\Tr^\infty_n$ to denote the canonical projection from $\mathcal Q_\infty$ to $\mathcal Q_n$. Similarly, set
\[ \mathcal T_\infty^i = \varprojlim \mathcal T_n^i, \qquad
	\mathcal S_\infty^i = \varprojlim \mathcal S_n^i, \qquad
	\mathcal R_\infty^{} = \varprojlim \mathcal R_n^{} \]
for $i\in\{1,2,3\}$. Then
\[ \mathcal T_\infty^{} = \varprojlim \mathcal T_n^{}
	= \mathcal T_\infty^1 \uplus \mathcal T_\infty^2 \uplus \mathcal T_\infty^3 \]
and
\[ \mathcal Q_\infty^{} = \mathcal T_\infty \uplus
	\mathcal S_\infty^1 \uplus \mathcal S_\infty^2 \uplus \mathcal S_\infty^3 \uplus \mathcal R_\infty. \]
Let $w\in\Words^*$ be a word of length $n\ge0$ and let $f\in\mathcal Q_\infty$. Then
\[ \pi_w(f) = (\pi_w(\Tr^\infty_n f),\pi_w(\Tr^\infty_{n+1} f),\dotsc)\in\mathcal Q_\infty \]
extends the definition of the restriction operator $\pi_w$ to $\pi_w\colon\mathcal Q_\infty\to\mathcal Q_\infty$.

Next we define the \emph{type} of an element of $\mathcal Q_{\infty}$ (or a part of it). Set
\[ \mathcal C = \{\img5,\img6,\img7,\img1,\img2,\img3,\img4\}. \]
For $f\in\mathcal Q_\infty$ let $\type_\emptyword(f)\in\mathcal C$ be given by Table~\ref{table:type}. The symbol $\type_\emptyword(f)$ gives a crude indication of the shape of $f$.

\begin{table}[htb]
\caption{The definition of $\type_\emptyword(f)$ for $f\in\mathcal Q_\infty$.}
\label{table:type}
\centering
\begin{tabular}{@{}*{8}{c}@{}}
\toprule
$M$ & $\mathcal T_\infty^1$ & $\mathcal T_\infty^2$ & $\mathcal T_\infty^3$ %
    & $\mathcal S_\infty^1$ & $\mathcal S_\infty^2$ & $\mathcal S_\infty^3$ & $\mathcal R_\infty^{}$ \\
\midrule
$\type_\emptyword(f)$ for $f\in M$ & \img5 & \img6 & \img7 & \img1 & \img2 & \img3 & \img4 \\
\bottomrule
\end{tabular}
\end{table}

For any non-empty word $w\in\Words^*$ define $\type_w(f)$ by $\type_w(f) = \type_\emptyword(\pi_w(f))$. This yields a map $\mathbold\type\colon\mathcal Q_\infty\to\mathcal C^{\Words^*}$ given by $\mathbold\type(f) = (\type_w(f))_{w\in\Words^*}$: $\mathbold\type(f)$ encodes the shape of $f$ at every level. In order to reconstruct $\Tr^\infty_n f$ from $\mathbold\type(f)$, let $\eta$ be the map from $\mathcal C$ to the set of subgraphs of $G_0$ defined in the obvious way, see Table~\ref{table:sub1}. Then
\[ \Tr^\infty_n f = \bigcup_{w\in\Words^n} \psi_w(\eta(\type_w(f))). \]
Hence $\mathbold\type$ is one-to-one.

\begin{table}[htb]
\caption{The mappings $\eta$ and $\nu$.}
\label{table:sub1}
\centering
\begin{tabular}{@{}*{8}{c}@{}}
\toprule
$x$ & \img5 & \img6 & \img7 & \img1 & \img2 & \img3 & \img4 \\
\midrule
$\eta(x)$ & \tree yyynyy & \tree yyyyny & \tree yyyyyn & \tree yyyynn & \tree yyynyn & \tree yyynny & \tree yyynnn \\
$\nu(x)$ & 1 & 2 & 3 & 4 & 5 & 6 & 7 \\
\bottomrule
\end{tabular}
\end{table}

Let $\nu$ be the bijection from $\mathcal C$ to $\{1,\dotsc,7\}$ given in Table~\ref{table:sub1}. Define the functions $\type^\num_{i,n}(f) = \card{\{ w\in\Words^n \,:\, \nu(\type_w(f)) = i \}}$, which count the number of $n$\ndash parts of type $i \in \{1,2,\ldots,7\}$, and set
\[ \mathbold\type^\num_n(f) = \bigl(\type^\num_{1,n}(f),\dotsc,\type^\num_{7,n}(f)\bigr) \]
for $n\ge0$. Of course all these definitions also make sense for finite forests $f\in\mathcal Q_m$ (where $m$ is some nonnegative integer). For $w\in\Words^n$, $n\le m$, define $\type_w(f)$ and $\mathbold\type_n(f)$ in analogy to the definitions above.

Finally, we define the number of connected components $c(x)$, where $x$ is a symbol in $\mathcal C$, in the canonical way as follows:
\[ c(x) = \begin{cases}
		1 & \text{if } x \in \{\img5,\img6,\img7\}, \\
		2 & \text{if } x \in \{\img1,\img2,\img3\}, \\
		3 & \text{if } x = \img4.
	\end{cases} \]
For $f\in\mathcal Q_\infty$, we set $c(f)=c(\type_\emptyword(f))$, which is also the number of components of $\Tr^\infty_n f$ for any $n\ge0$. The following simple lemma relates all our counting functions (cf.~Lemma~5.1 in \cite{teufl2011number}). We write $\mathbold v^t$ to denote the transpose of a vector $\mathbold v$.

\begin{lemma}\label{lemma:constraints}
For any $f\in\mathcal Q_\infty$ and any $n\ge0$,
\begin{align*}
\mathbold\type^\num_n(f) \cdot (1,1,1,1,1,1,1)^t &= 3^n, \\
\mathbold\type^\num_n(f) \cdot (2,2,2,1,1,1,0)^t &= \tfrac32(3^n+1) - c(f), \\
\mathbold\type^\num_n(f) \cdot (1,1,1,-1,-1,-1,-3)^t &= 3 - 2c(f).
\end{align*}
\end{lemma}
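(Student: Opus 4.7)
The plan is to prove the three identities in order, using only the fact that each $\Tr^\infty_n f$ is a spanning forest of $G_n$ with $c(f)$ components, together with elementary counts on $\eta(x)$ for $x\in\mathcal C$.

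The first identity is immediate: the vector of ones picks out the total number of $n$-parts of $G_n$, and by the decomposition $G_n = \bigcup_{w\in\Words^n} \psi_w(G_0)$ there are exactly $3^n$ of them, giving $\mathbold\type^\num_n(f)\cdot(1,1,1,1,1,1,1)^t = 3^n$.

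For the second identity, one checks against Table~\ref{table:sub1} that the coefficients $(2,2,2,1,1,1,0)^t$ record the number of edges of $\eta(x)$ as $x$ ranges over $\mathcal C$ in the chosen order. Since distinct $n$-parts share only vertices and no edges, the formula $\Tr^\infty_n f = \bigcup_{w\in\Words^n} \psi_w(\eta(\type_w(f)))$ expresses $\Tr^\infty_n f$ as an edge-disjoint union, so its total number of edges equals $\mathbold\type^\num_n(f)\cdot(2,2,2,1,1,1,0)^t$. On the other hand, $\Tr^\infty_n f$ is a spanning forest of $G_n$ with $c(f)$ components (this fact, that the number of components is preserved by every $\Tr$, is encoded in Table~\ref{table:trace}), so the forest identity $|E(F)| = |V(F)| - c(F)$ combined with the count $|VG_n| = \tfrac32(3^n+1)$ (easily proved by induction from $|VG_0|=3$ and $|VG_{n+1}| = 3|VG_n|-3$, accounting for the three shared corner vertices at each step) delivers the right-hand side $\tfrac32(3^n+1) - c(f)$.

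For the third identity, observe that the coefficients $(1,1,1,-1,-1,-1,-3)^t$ are exactly $3-2c(x)$ as $x$ ranges over $\mathcal C$ in the ordering of Table~\ref{table:sub1}. Hence the left-hand side equals $3\cdot 3^n - 2\sum_{w\in\Words^n} c(\type_w(f))$. Applying $|V|-|E|=c$ locally to each $\eta(\type_w(f))$, a forest on three vertices, yields $c(\type_w(f)) = 3 - |E(\eta(\type_w(f)))|$, and summing over $w$ together with the second identity gives $\sum_w c(\type_w(f)) = 3\cdot 3^n - \tfrac32(3^n+1) + c(f) = \tfrac32(3^n-1) + c(f)$. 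Substituting this back produces $3 - 2c(f)$, as required.

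The only point worth a careful sentence is the claim that $c(\Tr^\infty_n f) = c(f)$: this is built into Table~\ref{table:trace}, where each row of the trace map sends a forest of a given connectivity type to a representative of the same type on $G_0$, and iterating $\Tr$ preserves this. Once that is noted, the proof is essentially a double bookkeeping of vertices, edges, and components in the self-similar decomposition.
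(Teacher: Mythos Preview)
Your proof is correct and follows essentially the same approach as the paper: the first identity is immediate, the second counts edges of the spanning forest $\Tr^\infty_n f$ and uses $|E|=|V|-c$, and the third is a linear combination of the first two. The only cosmetic difference is that the paper derives the third identity in one line by ``elimination of $3^n$'' (i.e., taking twice the second identity minus three times the first), whereas you route the same computation through the interpretation of the coefficients as $3-2c(x)$; the underlying algebra is identical.
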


\begin{proof}
The first equation is immediate. In order to prove the second, notice that $\mathbold\type^\num_n(f)\cdot(2,2,2,1,1,1,0)^t$ is the number of edges in the spanning forest $\Tr^\infty_n f$ of $G_n$. As this spanning forest has $c(f)$ components, its number of edges is given by $\card{VG_n}-c(f) = \frac32(3^n+1)-c(f)$. The third equation follows from the first and the second by elimination of $3^n$.
\end{proof}

\section{Uniform spanning trees}
\label{section:ust}

We now come to the core part of this paper: the discussion of the structure of uniform spanning trees of $G_n$. Let us write $\Unif\mathcal X$ to denote the uniform distribution on a finite, non-empty set $\mathcal X$. For $i\in\{1,2,3\}$ let $T_n^i$ be a uniformly random element in $\mathcal T_n^i$. If $B\sim\Unif\{1,2,3\}$ is independent of $T_n^i$, then $T_n^B$ is clearly a uniform spanning tree of $G_n$, i.e., $T_n^B\sim\Unif\mathcal T_n$. In the following lemma, we prove the important fact that the trace preserves probabilities:

\begin{lemma}\label{lemma:extension}
Let $i\in\{1,2,3\}$. If $T_n^i$ is uniformly random on $\mathcal T_n^i$, $S_n^i$ is uniformly random on $\mathcal S_n^i$, and $R_n^{}$ is uniformly random on $\mathcal R_n^{}$, then
\begin{align*}
\Prob(\Tr T_{n+1}^i  \in A) &= \Prob(T_n^i  \in A), \\
\Prob(\Tr S_{n+1}^i  \in B) &= \Prob(S_n^i  \in B), \\
\Prob(\Tr R_{n+1}^{} \in C) &= \Prob(R_n^{} \in C)
\end{align*}
for any $A\subseteq\mathcal T_n^i$, $B\subseteq\mathcal S_n^i$, $C\subseteq\mathcal R_n^{}$.
\end{lemma}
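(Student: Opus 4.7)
The plan is to prove the three statements simultaneously by induction on $n$. The base case $n=0$ is trivial because $\mathcal T_0^i$, $\mathcal S_0^i$ and $\mathcal R_0^{}$ are singletons. For the inductive step the goal is to verify that for each $g$ in the appropriate class at level $n$, the cardinality $\card{\Tr^{-1}(g) \cap (\text{class of }g)}$ depends only on the class of $g$, not on $g$ itself. Unwinding the definition of $\Tr$ along the first letter $j\in\Words$ of the indexing words gives the recursive identity
\[
\Tr^{n+1}_n f = \bigcup_{j\in\Words} \psi_j\bigl(\Tr^n_{n-1}\pi_j(f)\bigr)
\qquad (f\in\mathcal Q_{n+1}),
\]
so if we write $g = \bigcup_j \psi_j(g_j)$ with $g_j=\pi_j(g)\in\mathcal Q_{n-1}$, then $\Tr f=g$ is equivalent to $\Tr\pi_j(f)=g_j$ for each $j\in\{1,2,3\}$. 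Since $\Tr$ preserves the categories $\mathcal T^k$, $\mathcal S^k$, $\mathcal R$, any admissible preimage $\pi_j(f)$ automatically shares the category of $g_j$, and the category of $f=\bigcup_j\psi_j(\pi_j(f))$ is in turn forced by those three. Hence
\[
\card{\Tr^{-1}(g)} = \prod_{j=1}^{3} \card{\Tr^{-1}(g_j)},
\]
and by the inductive hypothesis each factor equals one of $C_T:=\tau_n/\tau_{n-1}$, $C_S:=\sigma_n/\sigma_{n-1}$, $C_R:=\rho_n/\rho_{n-1}$, depending solely on the category of $g_j$.

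Thus the preimage count depends only on the category profile $(t_1,t_2,t_3)$ of the three $1$\ndash parts of $g$. A forest in $G_m$ with $c$ components has $\card{VG_m}-c$ edges, and $3\card{VG_{n-1}}-\card{VG_n}=3$, which together give $\sum_j c(g_j)=3+c(g)$. Enumerating triples $(c_1,c_2,c_3)\in\{1,2,3\}^3$ with the correct sum yields: for $g\in\mathcal T_n^i$ only the profile $(T,T,S)$ (up to permutation) occurs; for $g\in\mathcal S_n^i$ both $(T,T,R)$ and $(T,S,S)$ occur; for $g\in\mathcal R_n$ both $(T,S,R)$ and $(S,S,S)$ occur. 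In the $\mathcal T_n^i$ case the preimage count is unambiguously $C_T^2 C_S$ and we are done; in the other two cases the two competing profiles give equal counts if and only if $C_T C_R = C_S^2$, i.e.\ $\sigma_n^2=\tau_n\rho_n$.

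It remains to establish this identity, which I would prove inductively from Lemma~\ref{lemma:count}: at $n=0$ all three quantities are $1$, and substituting $\tau_n\rho_n=\sigma_n^2$ into the three recursions collapses them to $\tau_{n+1}=18\tau_n^2\sigma_n$, $\sigma_{n+1}=30\tau_n\sigma_n^2$, $\rho_{n+1}=50\sigma_n^3$, so that $\sigma_{n+1}^2=900\tau_n^2\sigma_n^4=\tau_{n+1}\rho_{n+1}$; alternatively the identity is immediate from the explicit formulae in the same lemma. The main obstacle here is conceptual rather than computational: one must notice that the decomposition argument alone does \emph{not} yield a constant preimage count, because a single class such as $\mathcal S_n^i$ contains elements with genuinely different $1$\ndash part profiles, and the nontrivial identity $\sigma_n^2=\tau_n\rho_n$ is precisely what is needed to balance these competing profiles.
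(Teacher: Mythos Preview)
Your proof is correct, but it follows a different route from the paper's. The paper decomposes a forest $g\in\mathcal Q_n$ at the \emph{finest} level: each of the $3^n$ cells $\psi_w(G_0)$, $w\in\Words^n$, contributes an independent factor $18$, $30$, or $50$ to $\card{\Tr^{-1}g}$, according to whether $c(\type_w(g))$ equals $1$, $2$, or $3$. The linear constraints of Lemma~\ref{lemma:constraints} (which fix the number of cells of each component-count in terms of $n$ and $c(g)$ alone) then allow one to evaluate $18^a 30^b 50^c$ directly as $18\cdot540^{(3^n-1)/2}$ for $g\in\mathcal T_n^i$, and similarly for the other classes. Your argument instead decomposes at the \emph{coarsest} level into three pieces at level $n-1$, feeds the problem back into itself by induction, and isolates the identity $\sigma_n^2=\tau_n\rho_n$ as the key algebraic fact that makes the two competing profiles in $\mathcal S_n^i$ and $\mathcal R_n$ give the same preimage count. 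The two arguments are really the same phenomenon seen at different scales: the paper's computation works precisely because $18\cdot50=30^2$, which is the $n=1$ instance of your identity, and Lemma~\ref{lemma:constraints} is effectively your edge-count constraint $\sum_j c(g_j)=3+c(g)$ iterated down to level $0$. The paper's version has the advantage of fitting directly into the later type-counting framework (Proposition~\ref{proposition:tree1} and beyond), while your version is pleasantly self-contained and makes the role of $\sigma_n^2=\tau_n\rho_n$ explicit.
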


\begin{proof}
In order to prove the first identity, we have to show that
\[ \Prob(\Tr T_{n+1}^i = t) = \Prob(T_n^i = t) \]
for any $t\in\mathcal T_n^i$. This is equivalent to
\[ \card{\Tr^{-1} t} = \frac{\tau_{n+1}}{\tau_n} \]
for any $t\in\mathcal T_n^i$. Since
\[ \card{\mathcal T_1^k} = \tau_1 = 18, \qquad
	\card{\mathcal S_1^k} = \sigma_1 = 30, \qquad
	\card{\mathcal R_1^{}} = \rho_1 = 50 \]
for $k\in\{1,2,3\}$, Lemma~\ref{lemma:constraints} implies that
\[ \card{\Tr^{-1} t}
	= 18^{\type^\num_{n,1}(t)+\type^\num_{n,2}(t)+\type^\num_{n,3}(t)} \cdot
	  30^{\type^\num_{n,4}(t)+\type^\num_{n,5}(t)+\type^\num_{n,6}(t)} \cdot
	  50^{\type^\num_{n,7}(t)}
	= 18 \cdot 540^{(3^n-1)/2}. \]
Using Lemma~\ref{lemma:count}, it is easy to see that
\[ \frac{\tau_{n+1}}{\tau_n} = 18 \cdot 540^{(3^n-1)/2}. \]
The same argument applies to the second and third identity, too.
\end{proof}

In light of Lemma~\ref{lemma:extension} and Kolmogorov's Extension Theorem there is a probability measure $P_{T^i}$ on $\mathcal T_\infty^i$ such that $P_{T^i}(\{t \in \mathcal T_{\infty}^i\,:\,\Tr^\infty_n t\in\cdot\}) = \Prob(T_n^i\in\cdot)$. Let $P_{S^i}$ and $P_{R^{}}$ be the analogous measures on $\mathcal S_\infty^i$ and $\mathcal R_\infty^{}$, respectively. Set
\begin{align*}
\Omega &= \{1,2,3\} \times \mathcal T_\infty^1 \times \mathcal T_\infty^2 \times \mathcal T_\infty^3 \times
	\mathcal S_\infty^1 \times \mathcal S_\infty^2 \times \mathcal S_\infty^3 \times \mathcal R_\infty^{}, \\
P &= \Unif\{1,2,3\} \times P_{T^1} \times P_{T^2} \times P_{T^3}
	\times P_{S^1} \times P_{S^2} \times P_{S^3} \times P_{R^{}}.
\end{align*}
Let $B, T_\infty^i$, $S_\infty^i$, $R_\infty^{}$ be the canonical projections from $\Omega$ to $\{1,2,3\}$, $\mathcal T_\infty^i$, $\mathcal S_\infty^i$, $\mathcal R_\infty^{}$, respectively. Set $T_\infty = T_\infty^B$ and, for $n\ge0$, $T_n = \Tr^\infty_n T_\infty$. Then $T_n$ is a uniform spanning tree on $G_n$ and $T_n=\Tr^m_n T_m=\Tr^\infty_n T_{\infty}$ for $m\ge n\ge 0$. Analogous statements hold for $S_n^i = \Tr^\infty_n S_\infty^i$ and $R_n = \Tr^\infty_n R_\infty$.

In the following we write $\Prob$ instead of $P$ and always use $\Omega$ equipped with $\Prob$ as probability space, whenever the random elements $T_n^{}$, $T_n^i$, $S_n^i$, etc. are considered.

Let $[\img5,\img5,\img2]$ (suppressing the dependence on $n$) be a shorthand for the set
\[ \{ \psi_1(f_1) \cup \psi_2(f_2) \cup \psi_3(f_3) \,:\,
	f_1\in\mathcal T_{n-1}^1, f_2\in\mathcal T_{n-1}^1, f_3\in\mathcal S_{n-1}^2\}, \]
and analogously for other combinations. Using Lemma~\ref{lemma:count} it is easy to see that
\begin{equation}\label{eq:probs}
\begin{aligned}
\Prob(T_n^3\in[\img5,\img1,\img5]) &= \Prob(T_n^3\in[\img5,\img1,\img6]) = \dotsb
	= \frac{\tau_{n-1}^2\sigma_{n-1}}{\tau_n} = \frac1{18}, \\
\Prob(S_n^3\in[\img5,\img3,\img1]) &= \Prob(S_n^3\in[\img6,\img3,\img1]) = \dotsb
	= \frac{\tau_{n-1}\sigma_{n-1}^2}{\sigma_n} = \frac1{30}, \\
\Prob(S_n^3\in[\img5,\img5,\img4]) &= \Prob(S_n^3\in[\img5,\img6,\img4]) = \dotsb
	= \frac{\tau_{n-1}^2\rho_{n-1}}{\sigma_n} = \frac1{30}, \\
\Prob(R_n^{}\in[\img3,\img2,\img3]) &= \Prob(R_n^{}\in[\img1,\img3,\img3]) = \dotsb
	= \frac{\sigma_{n-1}^3}{\rho_n} = \frac1{50}, \\
\Prob(R_n^{}\in[\img5,\img2,\img4]) &= \Prob(R_n^{}\in[\img6,\img2,\img4]) = \dotsb
	= \frac{\tau_{n-1}\sigma_{n-1}\rho_{n-1}}{\rho_n} = \frac1{50},
\end{aligned}
\end{equation}
where dots indicate combinations in the same ``group'' (group sizes are $18$, $21$, $9$, $14$, and $36$, see Figure~\ref{figure:rectree}). Of course, analogous results also hold for $T_n^1$, $T_n^2$, $S_n^1$, $S_n^2$. Furthermore, note that
\[ \Prob(\pi_2(T_n^3) \in \cdot \mid T_n^3\in[\img5,\img1,\img5]) = \Unif\mathcal S_{n-1}^1, \]
and analogously for other combinations and restrictions. Using this fact we obtain the following result, which relates uniform spanning trees on Sierpi\'nski graphs to a multi-type Galton-Watson process:

\begin{proposition}\label{proposition:tree1}
Let $\mathcal U_\infty$ be one of $\mathcal T_\infty^{}$, $\mathcal T^i_\infty$, $\mathcal S^i_\infty$, $\mathcal R^{}_\infty$, and let $U_\infty$ be the corresponding random object.
\begin{enumerate}[\normalfont(1)]
\item The random tree
	\[ \mathbold\type(U_\infty) = (\type_w(U_\infty))_{w\in\Words^*} \]
	is a labelled multi-type Galton-Watson tree with labels in $\Words^*$ and types in $\mathcal C$.
	The type distribution of the root depends on the specific choice for $\mathcal U_\infty$ and
	is given by $\Unif\{\type_\emptyword(f) \,:\, f\in\mathcal U_\infty\}$.
	The set of individuals is deterministic and equal to $\Words^*$.
	Each individual has three children with suffixes $1,2,3$.
	For $x\in\mathcal C$ set
	\[ \mathcal D(x)
		= \bigl\{ (\type_1(f),\type_2(f),\type_3(f)) \,:\,
			f\in\mathcal Q_1, \type_\emptyword(f)=x \bigr\}
		\subseteq \mathcal C^3. \]
	Then, by Equation~\eqref{eq:probs}, the offspring distribution of an individual of type $x$
	is given by $\Unif\mathcal D(x)$, that is,
	\begin{equation}\label{eq:gwdist}
	\Prob((\type_{w1}(U_\infty),\type_{w2}(U_\infty),\type_{w3}(U_\infty)) \in \cdot \mid \type_w(U_\infty) = x)
		= \Unif\mathcal D(x).
	\end{equation}
\item $(\mathbold\type^\num_n(U_\infty))_{n\ge0}$ is a multi-type Galton-Watson process with seven types,
	which is non-singular, positively regular, and supercritical.
	The type distribution of the root is given by the uniform distribution
	$\Unif\{\nu(\type_\emptyword(f)) \,:\, f\in\mathcal U_\infty\}$.
	The offspring generating function is easily computed by means of Equation~\eqref{eq:gwdist}:
	using the abbreviation $s=\frac13(z_1+z_2+z_3)$, we have
	\begin{align*}
	\mathbold f(\mathbold z) = \Bigl(
		&\tfrac12 s^2(z_5+z_6), \; \tfrac12 s^2(z_4+z_6), \; \tfrac12 s^2(z_4+z_5), \vphantom{\Big(}\\
		&\tfrac1{10} s\bigl(3z_4^2 + 2z_4(z_5+z_6) + 3sz_7\bigr), \vphantom{\Big(}\\
		&\tfrac1{10} s\bigl(3z_5^2 + 2z_5(z_4+z_6) + 3sz_7\bigr), \vphantom{\Big(}\\
		&\tfrac1{10} s\bigl(3z_6^2 + 2z_6(z_4+z_5) + 3sz_7\bigr), \vphantom{\Big(}\\
		&\tfrac1{25} \bigl(z_4^2z_5 + z_4z_5^2 + z_4^2z_6 + z_4z_6^2 + z_5^2z_6 + z_5z_6^2 \vphantom{\Big(}\\
		&\qquad\qquad + z_4z_5z_6 + 6s(z_4+z_5+z_6)z_7\bigr) \Bigr).
	\end{align*}
	Its mean matrix $\mathbold M$ is given by
	\[ \mathbold M = \frac1{150}\begin{pmatrix}
			100 & 100 & 100 &   0 &  75 &  75 &   0 \\
			100 & 100 & 100 &  75 &   0 &  75 &   0 \\
			100 & 100 & 100 &  75 &  75 &   0 &   0 \\
			 65 &  65 &  65 & 150 &  30 &  30 &  45 \\
			 65 &  65 &  65 &  30 & 150 &  30 &  45 \\
			 65 &  65 &  65 &  30 &  30 & 150 &  45 \\
			 36 &  36 &  36 &  78 &  78 &  78 & 108 \\
		\end{pmatrix}. \]
	The dominating eigenvalue of $\mathbold M$ is equal to $3$.
	The corresponding right and left eigenvectors are
	\[ \mathbold v_R=(1,1,1,1,1,1,1)^t \qquad\text{and}\qquad
		\mathbold v_L=\tfrac1{288}(53,53,53,38,38,38,15), \]
	respectively. $\mathbold v_R$ and $\mathbold v_L$ are normalized
	so that $\mathbold v_L \cdot \mathbold v_R = 1$ and $\norm{\mathbold v_L}_1=1$.
\item Since $\norm{\mathbold\type^\num_n(U_\infty)}_1 = \mathbold\type^\num_n(U_\infty) \cdot \mathbold v_R = 3^n$,
	it follows that
	\[ \lim_{n\to\infty} 3^{-n} \mathbold\type^\num_n(U_\infty) = \mathbold v_L \]
	almost surely, see Theorem~1.8.3 in \cite{mode1971multitype}.
\end{enumerate}
\end{proposition}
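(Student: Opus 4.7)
The plan is to establish part (1) by verifying the Markov branching property from the uniform conditioning, then to compute the generating function, mean matrix, and eigenvectors in part (2) by direct evaluation, and finally to invoke a standard convergence theorem for positively regular supercritical multi-type Galton-Watson processes in part (3).

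For part (1), I would proceed inductively on generations. By Lemma~\ref{lemma:extension} and the construction of $P_{T^i}$, $P_{S^i}$, $P_{R}$ via Kolmogorov's extension theorem, each finite trace $\Tr^\infty_n U_\infty$ is uniformly distributed on the corresponding finite family $\mathcal U_n$. The key observation is the following conditional independence: for any word $w\in\Words^*$ and any $x\in\mathcal C$, conditional on $\type_w(U_\infty) = x$, the three restrictions $\pi_{w1}(U_\infty), \pi_{w2}(U_\infty), \pi_{w3}(U_\infty)$ are independent, each uniformly distributed on the one of $\mathcal T^i_\infty$, $\mathcal S^i_\infty$, $\mathcal R_\infty$ prescribed by the combinatorial shape $x$. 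This is inherited from the finite case: for a uniform element $u \in \mathcal U_n$, conditional on the triple of types $(\type_1(u),\type_2(u),\type_3(u))$, the three restrictions $\pi_1(u),\pi_2(u),\pi_3(u)$ are independent uniforms on the indicated finite spaces, simply because the counting of forests factors as a product (compare Lemma~\ref{lemma:count}). Passing to projective limits preserves this factorization. Combined with Equation~\eqref{eq:probs}, which identifies the distribution of $(\type_1,\type_2,\type_3)$ conditional on the root type $x$ as $\Unif \mathcal D(x)$, this yields the labelled multi-type Galton-Watson structure and the offspring law \eqref{eq:gwdist}. The root type distribution is $\Unif\{\type_\emptyword(f) : f\in\mathcal U_\infty\}$ by construction; for the three ``tree'' cases this is the constant $\img5$, $\img6$, or $\img7$, and for $\mathcal T_\infty$ it is uniform on $\{\img5,\img6,\img7\}$ by the choice of $B$.

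Part (2) is largely mechanical. To obtain the $i$-th component of $\mathbold f$, enumerate $\mathcal D(x_i)$ using the arrangements shown in Figure~\ref{figure:rectree}: for each $x_i$ the group sizes $(18, 21, 9, 14, 36)$ combined with the choice of which corner roles the three subparts play yield precisely the monomials in $\mathbold f(\mathbold z)$. Computing $m_{ij} = (\partial f_i/\partial z_j)(\mathbf 1)$ is then a direct calculation that produces the matrix $\mathbold M$. The eigenvalue and eigenvector claims are verified by multiplying $\mathbold M \mathbold v_R$ and $\mathbold v_L \mathbold M$ and checking the normalization $\mathbold v_L \cdot \mathbold v_R = 1$, $\norm{\mathbold v_L}_1 = 1$. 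Positive regularity holds because $\mathbold M$ has strictly positive $(i,j)$ entry whenever $j\in\{1,2,3\}$ and in turn every column of $\mathbold M$ has a positive entry at such a row, so $\mathbold M^2$ is strictly positive; non-singularity is clear since $\mathbold f$ is nonlinear; supercriticality follows from the dominant eigenvalue $3>1$.

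For part (3), the identity $\mathbold\type^\num_n(U_\infty)\cdot \mathbold v_R = 3^n$ from Lemma~\ref{lemma:constraints} gives a deterministic normalization along the right eigenvector. By Theorem~1.8.3 of \cite{mode1971multitype}, applied to our positively regular, supercritical, non-singular process with bounded offspring, there exists a nonnegative random variable $W$ such that $3^{-n}\mathbold\type^\num_n(U_\infty)\to W \mathbold v_L$ almost surely. Taking the inner product with $\mathbold v_R$ on both sides yields $1 = W \, (\mathbold v_L\cdot \mathbold v_R) = W$ almost surely, so $W \equiv 1$, giving the claimed limit.

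The main obstacle will be formalizing the conditional independence in part (1): although the combinatorial factorization is transparent at each finite level, one must argue carefully that this factorization is preserved in the projective limit, i.e., that the Markov branching property is inherited by the infinite-level random object and not just by its finite truncations. Once this is in hand, the rest of the proof reduces to computation and an appeal to the standard theory.
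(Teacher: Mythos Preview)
Your proposal is correct and follows the same approach as the paper, which in fact does not provide a separate proof for this proposition: the justifications are embedded in the statement itself (the appeal to Equation~\eqref{eq:probs} for part~(1), the phrase ``easily computed by means of Equation~\eqref{eq:gwdist}'' for part~(2), and the citation of Theorem~1.8.3 in \cite{mode1971multitype} for part~(3)). Your write-up is a faithful and somewhat more detailed expansion of exactly these steps, including the observation that the deterministic identity $\mathbold\type^\num_n(U_\infty)\cdot\mathbold v_R=3^n$ forces the martingale limit $W$ to equal~$1$.
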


\begin{remark}\label{remark:coll1}
In order to sample a uniform spanning tree on $G_n$, we may simulate the $n$\ndash th generation of a labelled multi-type Galton-Watson tree as described above. In this process, we have to choose one of \img5, \img6, \img7 with equal probability as the type for the ancestor $\emptyword$ of the tree. It is possible to postpone this choice from the beginning to the $n$\ndash th generation. To this end, collapse the three types \img5, \img6, \img7 into one type \img0. This yields again a labelled multi-type Galton-Watson tree, but now with five types $\{\img0,\img1,\img2,\img3,\img4\}$. In order to obtain a uniform spanning tree on $G_n$, consider the $n$\ndash th generation of this simplified labelled multi-type Galton-Watson tree and replace each occurrence of \img0 independently by one of \img5, \img6, \img7 with equal probability. This modified $n$\ndash th generation describes a spanning tree on $G_n$, whose distribution is uniform. Figure~\ref{figure:example-spanning} shows an example of a randomly generated spanning tree on $G_5$.
\end{remark}

\begin{figure}[htb]
\centering
\includegraphics[scale=0.4]{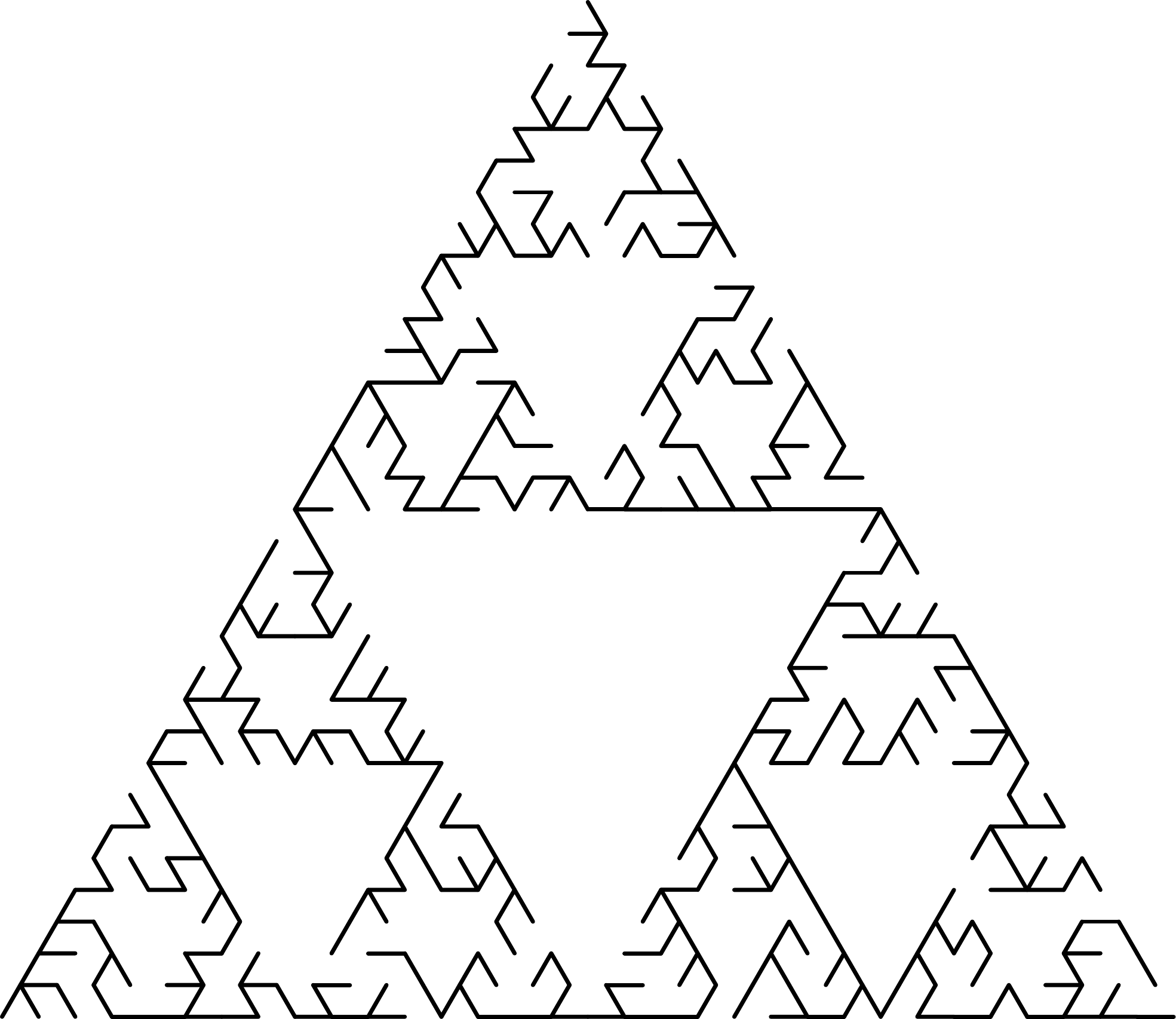}
\caption{A randomly generated spanning tree on $G_5$.}
\label{figure:example-spanning}
\end{figure}

\begin{remark}\label{remark:func1}
Suppose $\lambda$ is a parameter of spanning trees in $G_n$, and we are interested in the behaviour of $\lambda(T_n)$ as $n\to\infty$. When $\lambda(T_n)$ is a functional of $\mathbold\type^\num_n(T_\infty)$, say $\lambda(T_n) = h(\mathbold\type^\num_n(T_\infty))$ for some linear function $h$, then $3^{-n}\lambda(T_n) \to h(\mathbold v_L)$ almost surely. Of course, this generalizes to positive homogeneous functions $h$. As a simple example consider the number of $n$\ndash parts with $i$ connected components. For $f\in\mathcal Q_\infty$, $i\in\{1,2,3\}$, and $n\ge0$, let us denote this quantity by $c^\num_{i,n}(f) = \card{\{w\in\Words^n \,:\, c(\type_w(f))=i\}}$. Then
\[ \mathbold c^\num_n(f) = \bigl(c^\num_{1,n}(f),c^\num_{2,n}(f),c^\num_{3,n}(f)\bigr)
	= \mathbold\type^\num_n(f) \cdot \begin{pmatrix}
		1 & 1 & 1 & 0 & 0 & 0 & 0 \\
		0 & 0 & 0 & 1 & 1 & 1 & 0 \\
		0 & 0 & 0 & 0 & 0 & 0 & 1
	\end{pmatrix}^t \]
and therefore $3^{-n} \mathbold c^\num_n(U_\infty) \to \frac1{96}(53,38,5)$ almost surely as $n\to\infty$ if $U_\infty$ is one of $T_\infty^{}$, $T_\infty^i$, $S_\infty^i$, $R_\infty^{}$ for $i\in\{1,2,3\}$. Note that, due to symmetry, $(\mathbold c^\num_n(U_\infty))_{n\ge0}$ is a multi-type Galton-Watson process in its own right.

A straightforward calculation shows that the variance of $\mathbold\type^\num_n$ and $\mathbold c^\num_n$ is of order $3^n$, so that Chebyshev's inequality yields
\begin{equation}\label{eq:cheby}
\Prob(\lVert\mathbold\type^\num_n(U_{\infty}) - 3^n \mathbold v_L \rVert_1 \geq \alpha^n) \ll 3^{n} \alpha^{-2n}
\end{equation}
for any $\alpha \in (\sqrt{3},3)$, and an analogous inequality for $\mathbold c^\num_n$.
\end{remark}

In the following we study two quantities of a random spanning forest of $G_n$, which need more work as the previous remark does not apply directly: The first quantity are the component sizes in $S_n^1,S_n^2,S_n^3,R_n^{}$. In this case it turns out that components can be described using an augmented labelled multi-type Galton-Watson tree. Secondly, we study the degree distribution in $T_n$. Here the recursive description of uniform spanning trees in $G_n$ (see Figure~\ref{figure:rectree} and Proposition~\ref{proposition:tree1}) and the rapid decay of tail probabilities given by \eqref{eq:cheby} is used.

\subsection{Component sizes}\label{subsection:component}
Spanning trees only have one component, but for random spanning forests $S_n^i$ or $R_n$, the sizes (number of vertices or edges) of the components are interesting random variables. Let us briefly explain how their limiting distribution can be obtained.

First, we need some notation. For a non-empty subset $B$ of $VG_0$, let $f$ be an element of $\mathcal Q_\infty$, and assume that $B$ is the vertex set of the union of some connected components of $\Tr^\infty_0 f$. Write $C_n(f,B)$ to denote the union of those components of $\Tr^\infty_n f$ having non-empty intersection with $B$. For example, if $f \in S_{\infty}^1$, then $C_n(f,\{u_1\})$ is the component of $\Tr^\infty_n f$ that contains $u_1$, $C_n(f,\{u_2,u_3\})$ is the component that contains $u_2$ and $u_3$, and $C_n(f,\{u_1,u_2,u_3\})$ is the entire spanning forest $\Tr^\infty_n f$.

We are interested in the size of $C_n(f,B)$, which unfortunately is not a linear functional of $\mathbold\type^\num_n(f)$. However, it is possible to define a subtree of the Galton-Watson-tree $\mathbold\type(f)$ that encodes $f$ and to add extra information to the types $\mathcal C$ that records the evolution of the components in $C_n(f,B)$. If $f$ is randomly chosen, the resulting subtree with augmented types describes another labelled multi-type Galton-Watson tree, as will be shown in the following. For $n\ge0$, let
\[ \hat W_n(f,B) = \bigl\{ w\in\Words^n \,:\, C_n(f,B) \cap \psi_w(G_0) \ne \emptyset \bigr\} \]
be the set of those words $w\in\Words^n$ for which the corresponding $n$\ndash part $\psi_w(G_0)$ of the Sierpi\'nski gasket has non-empty intersection with $C_n(f,B)$. Their union
\[ \hat W(f,B) = \bigcup_{n\ge0} \hat W_n(f,B) \]
induces a subtree of $\Words^*$, and each word in $\hat W(f,B)$ has one, two or three children. For $w\in\hat W(f,B)$, write $\hat\kappa_w(f,B)$ to denote the vertex set $V(\pi_w(\psi_w(G_0)\cap C_n(f,B)))$ (in words: the vertices of the $n$\ndash part $\psi_w(G_0)$ that are in common components with vertices of $B$, projected back to $G_0$). To each $w\in\hat W_n(f,B)$, we assign one of the following nineteen types
\[ \mathcal{\hat C} = \{
	\comp5y--, \comp6y--, \comp7y--,
	\comp1yy-, \comp1ny-, \comp1yn-,
	\comp2yy-, \comp2ny-, \comp2yn-,
	\comp3yy-, \comp3ny-, \comp3yn-,
	\comp4yyy,
	\comp4nyy, \comp4yny, \comp4yyn,
	\comp4ynn, \comp4nyn, \comp4nny \} \]
encoding two pieces of information: $\type_w(f)$ (structure of the restriction of $f$ to the respective $n$\ndash part) and $\hat\kappa_w(f,B)$ (black parts indicate which of the corner vertices are in common components with elements of $B$). We denote this assignment by $\hat\type_w(f,B)$, see Table~\ref{table:comptypes} for a precise definition of $\hat\type_w(f,B)$ in terms of $\type_w(f)$ and $\hat\kappa_w(f,B)$.

\begin{table}[htb]
\caption{The type $\hat\type_w(f,B)$, given $\type_w(f)$ and $\hat\kappa_w(f,B)$.}
\label{table:comptypes}
\centering
\begin{tabular}{@{}*{8}{c}@{}}
\toprule
                    & \multicolumn{7}{c}{$\type_w(f)$} \\
                      \cmidrule(l){2-8}
$\hat\kappa_w(f,B)$ & \img5     & \img6     & \img7     & \img1     & \img2     & \img3     & \img4 \\
\cmidrule(r){1-1}     \cmidrule(l){2-8}
$\{u_1,u_2,u_3\}$   & \comp5y-- & \comp6y-- & \comp7y-- & \comp1yy- & \comp2yy- & \comp3yy- & \comp4yyy \\
$\{u_2,u_3\}$       &           &           &           & \comp1ny- &           &           & \comp4nyy \\
$\{u_1,u_3\}$       &           &           &           &           & \comp2ny- &           & \comp4yny \\
$\{u_1,u_2\}$       &           &           &           &           &           & \comp3ny- & \comp4yyn \\
$\{u_1\}$           &           &           &           & \comp1yn- &           &           & \comp4ynn \\
$\{u_2\}$           &           &           &           &           & \comp2yn- &           & \comp4nyn \\
$\{u_3\}$           &           &           &           &           &           & \comp3yn- & \comp4nny \\
\bottomrule
\end{tabular}
\end{table}

Finally set $\mathbold{\hat\type}(f,B) = (\hat\type_w(f,B))_{w\in\hat W(f,B)}$. It is easy to see that it is possible to reconstruct the graph $C_n(f,B)$ from $\mathbold{\hat\type}(f,B)$: formally,
\[ C_n(f,B) = \bigcup_{w\in\hat W_n(f,B)} \psi_w(\hat\eta(\hat\type_w(f,B))), \]
where $\hat\eta$ is given in Table~\ref{table:subcomp}.

\begin{table}[htb]
\caption{The mappings $\hat\eta$ and $\hat c$.}
\label{table:subcomp}
\centering
\begin{tabular}{@{}*{13}{c}@{}}
\toprule
$x$ & \comp5y-- & \comp6y-- & \comp7y-- %
    & \comp1yy- & \comp1ny- & \comp1yn- %
    & \comp2yy- & \comp2ny- & \comp2yn- %
    & \comp3yy- & \comp3ny- & \comp3yn- \\
\midrule
$\hat\eta(x)$ & \tree yyynyy & \tree yyyyny & \tree yyyyyn %
              & \tree yyyynn & \tree nyyynn & \tree ynnnnn %
              & \tree yyynyn & \tree ynynyn & \tree nynnnn %
              & \tree yyynny & \tree yynnny & \tree nnynnn \\
$\hat c(x)$ & 1 & 1 & 1 & 2 & 4 & 5 & 2 & 4 & 5 & 2 & 4 & 5 \\
\midrule
$x$ & \comp4yyy %
    & \comp4nyy & \comp4yny & \comp4yyn %
    & \comp4ynn & \comp4nyn & \comp4nny \\
\midrule
$\hat\eta(x)$ & \tree yyynnn %
              & \tree nyynnn & \tree ynynnn & \tree yynnnn %
              & \tree ynnnnn & \tree nynnnn & \tree nnynnn \\
$\hat c(x)$ & 3 & 6 & 6 & 6 & 7 & 7 & 7 \\
\bottomrule
\end{tabular}
\end{table}

Now let us define $\hat c(x)$ for $x\in\mathcal{\hat C}$ as in Table~\ref{table:subcomp}. For $i\in\{1,\dotsc,7\}$ and $n\ge0$ set $\hat c^\num_{i,n}(f,B) = \card{\{w\in\hat W_n(f,B) \,:\, \hat c(\hat\type_w(f,B)) = i\}}$ and
\[ \mathbold{\hat c}^\num_n(f,B) = \bigl( \hat c^\num_{1,n}(f,B), \dotsc, \hat c^\num_{7,n}(f,B) \bigr). \]
The vector $\mathbold{\hat c}^\num_n(f,B)$ counts the number of words in $\hat W(f,B)$ of given type up to symmetry. Note that the number of edges in $C_n(f,B)$ can be determined from $\mathbold{\hat c}^\num_n(f,B)$: it is given by
\[ \card{EC_n(f,B)} = \mathbold{\hat c}^\num_n(f,B) \cdot (2,1,0,1,0,0,0)^t, \]
and the number of vertices in $C_n(f,B)$ satisfies
\[ 1 \le \card{VC_n(f,B)} - \card{EC_n(f,B)} \le 3, \]
the precise value of the difference depending on the type of $f$ and the set $B$. Now let $U_\infty$ be one of $T_\infty^{}, T_\infty^i, S_\infty^i, R_\infty^{}$ ($i\in\{1,2,3\}$), and choose $B\subseteq VG_0$ so that $B$ is the vertex set of the union of some components of $\Tr^\infty_0 U_\infty$. Then $\mathbold{\hat\type}(U_\infty,B)$ is a labelled multi-type Galton-Watson tree with types in $\mathcal{\hat C}$, and $(\mathbold{\hat c}^\num_n(U_\infty,B))_{n\ge0}$ is a multi-type Galton-Watson process with seven types. The offspring generating function $\mathbold{\hat f}(\mathbold z)$ of the process is given by
\begin{align*}
\mathbold{\hat f}(\mathbold z) = \Bigl(
	& z_1^2 z_2, \; \tfrac7{10} z_1 z_2^2 + \tfrac3{10} z_1^2 z_3, \;
		\tfrac7{25} z_2^3 + \tfrac{18}{25} z_1 z_2 z_3, \\
	& \tfrac2{10} z_1 z_4 z_5 + \tfrac4{10} z_1 z_2 z_4 + \tfrac1{10} z_4^2 + \tfrac3{10} z_1^2 z_6, \\
	& \tfrac2{10} z_4 z_5 + \tfrac4{10} z_5 + \tfrac1{10} z_1 z_5^2 + \tfrac3{10} z_7, \\
	& \tfrac3{25} z_2^2 z_4 + \tfrac2{25} z_2 z_4 z_5 + \tfrac1{25} z_4 z_5^2 + \tfrac1{25} z_5^2
		+ \tfrac6{25} z_1 z_2 z_6 + \tfrac3{25} z_5 z_7 \\
	& \qquad\qquad + \tfrac3{25} z_1 z_3 z_4 + \tfrac3{25} z_4 z_6 + \tfrac3{25} z_1 z_5 z_6, \\
	& \tfrac6{25} z_5 + \tfrac1{25} z_2 z_4^2 + \tfrac2{25} z_4 z_5 + \tfrac1{25} z_4^2 z_5
		+ \tfrac6{25} z_7 + \tfrac3{25} z_1 z_4 z_6 + \tfrac3{25} z_1 z_5 z_7 + \tfrac3{25} z_4 z_7 \Bigr),
\end{align*}
and the mean matrix is given by
\[ \mathbold{\hat M} = \frac1{50} \cdot \begin{pmatrix}
		100 & 50 &  0 &  0 &  0 &  0 &  0 \\
		 65 & 70 & 15 &  0 &  0 &  0 &  0 \\
		 36 & 78 & 36 &  0 &  0 &  0 &  0 \\
		 60 & 20 &  0 & 40 & 10 & 15 &  0 \\
		  5 &  0 &  0 & 10 & 40 &  0 & 15 \\
		 24 & 28 &  6 & 24 & 24 & 24 &  6 \\
		 12 &  2 &  0 & 24 & 24 &  6 & 24 \\
		\end{pmatrix}. \]
Hence the multi-type Galton-Watson process is non-singular, but not positively regular, as the mean matrix is reducible. The dominating eigenvalue is $3$, which belongs to the $3\times3$ block of $\mathbold{\hat M}$ in the upper left corner. It has multiplicity $1$, and the corresponding right and left eigenvectors are
\[ \mathbold{\hat v}_R = \bigl(1,1,1,\tfrac56,\tfrac16,\tfrac23,\tfrac13\bigr)^t, \qquad
	\mathbold{\hat v}_L = \tfrac1{96} \cdot (53,38,5,0,0,0,0), \]
respectively. $\mathbold{\hat v}_R$ and $\mathbold{\hat v}_L$ are normalized so that $\mathbold{\hat v}_L \cdot \mathbold{\hat v}_R = 1$ and $\norm{\mathbold{\hat v}_L}_1 = 1$. Intuitively, the fact that only the first three entries in $\mathbold{\hat v}_L$ are nonzero (and that $\mathbold{\hat M}$ is dominated by the upper left $3 \times 3$\ndash block) can be explained by the fact that $n$\ndash parts of types such as \comp1ny-, \comp4nny, etc. (some vertices belong to $C_n(f,B)$, others do not) can only occur at the ``borders'' between the components of the forest $\Tr_n^{\infty} f$, which only make up a very small part of the entire graph $G_n$.

For every choice of the boundary vertices $B$, there is a non-negative random variable $\hat\theta(U_\infty,B)$ such that
\[ 3^{-n} \mathbold{\hat c}^\num_n(U_\infty,B) \to \mathbold{\hat v}_L \hat\theta(U_\infty,B) \]
holds almost surely as $n\to\infty$, see Theorem~2.4.1 in \cite{mode1971multitype}. By symmetry, there are eight different limit distributions, one for each of the following groups:
\begin{gather*}
\bigl\{ \hat\theta(T^{}_\infty,\{u_1,u_2,u_3\}) \bigr\}, \\
\bigl\{ \hat\theta(T^1_\infty,\{u_1,u_2,u_3\}),\hat\theta(T^2_\infty,\{u_1,u_2,u_3\}),
	\hat\theta(T^3_\infty,\{u_1,u_2,u_3\}) \bigr\}, \\
\bigl\{ \hat\theta(S^1_\infty,\{u_1,u_2,u_3\}),\hat\theta(S^2_\infty,\{u_1,u_2,u_3\}),
	\hat\theta(S^3_\infty,\{u_1,u_2,u_3\}) \bigr\}, \\
\bigl\{ \hat\theta(R^{}_\infty,\{u_1,u_2,u_3\}) \bigr\}, \\
\bigl\{ \hat\theta(S^1_\infty,\{u_2,u_3\}),\hat\theta(S^2_\infty,\{u_1,u_3\}),
	\hat\theta(S^3_\infty,\{u_1,u_2\}) \bigr\}, \\
\bigl\{ \hat\theta(S^1_\infty,\{u_1\}),\hat\theta(S^2_\infty,\{u_2\}),\hat\theta(S^3_\infty,\{u_3\}) \bigr\}, \\
\bigl\{ \hat\theta(R^{}_\infty,\{u_2,u_3\}),\hat\theta(R^{}_\infty,\{u_1,u_3\}),
	\hat\theta(R^{}_\infty,\{u_1,u_2\}) \bigr\}, \\
\bigl\{ \hat\theta(R^{}_\infty,\{u_1\}),\hat\theta(R^{}_\infty,\{u_2\}),\hat\theta(R^{}_\infty,\{u_3\}) \bigr\}.
\end{gather*}
Let us write $\hat\theta_i$ ($i\in\{0,\dotsc,7\}$) for a random variable having the same distribution as a random variable of the respective group above. Of course, $\hat\theta_0,\dotsc,\hat\theta_3$ (the cases when $B = \{u_1,u_2,u_3\}$) are almost surely constant, i.e.
\[ \hat\theta_0=\hat\theta_1=\hat\theta_2=\hat\theta_3=1 \]
almost surely. The remaining variables $\hat\theta_4,\dotsc,\hat\theta_7$ have continuous densities, and
\[ \Expect(\hat\theta_i) = \hat v_{i,R} \]
for $i\in\{4,\dotsc,7\}$, where $\hat v_{i,R}$ is the $i$\ndash coordinate of $\mathbold{\hat v}_R$. Note also that $1 - \hat\theta_4$ and $\hat\theta_5$ have the same distribution, and the same holds for $1 - \hat\theta_6$ and $\hat\theta_7$. The limits of the renormalised component sizes can be expressed in terms of these random variables. To be precise,
\[ \lim_{n\to\infty} 3^{-n}\card{VC_n(U_\infty,B)}
	= \lim_{n\to\infty} 3^{-n}\card{EC_n(U_\infty,B)} = \tfrac32 \hat\theta(U_\infty,B) \]
almost surely. In particular, the component $C_n(S^1_\infty,\{u_2,u_3\})$ is on average approximately five times larger than the complementary component $C_n(S^1_\infty,\{u_1\})$, since $\hat v_{4,R} = \frac56 = 5\hat v_{5,R}$.

\subsection{Degree distribution}\label{subsection:degree}
The distribution of the vertex degrees in a random spanning tree of the Sierpi\'nski graph $G_n$ was studied at length by Chang and Chen in their recent paper \cite{chang2010structure}. In particular, they determined the precise probability distribution of the degree of a given vertex, and determined the average proportion of the number of vertices of given degree as $n \to \infty$. Here we provide a somewhat different approach to this problem with the advantage that it also allows us to prove almost sure convergence of this proportion to a limit.

The number of vertices with a certain degree in a random spanning tree $T_n$ is again not a simple functional of the types. In fact, the degree distribution of a vertex $v \in VG_n$ depends not only on $n$, but also on the level of the vertex $v$ itself: by the level of a vertex, we mean the smallest $k$ such that $v \in VG_k$. Let us first consider the degree distribution of the corner vertices. By symmetry, it is obviously sufficient to consider one of them. Let $\mathbold d_n(h)$ be the vector of the probabilities that the degree $\deg_{U_n} u_1$ of the lower-left corner vertex $u_1$ in a random spanning forest $U_n$ is equal to $h \in \{0,1,2\}$ for $U_n \in \{T_n^1,T_n^2,T_n^3,S_n^1,S_n^2,S_n^3,R_n^{}\}$. The entries are denoted by $d_n(\img5,h),d_n(\img6,h)$, etc. Thus $d_n(\img5,h) = \Prob(\deg_{T_n^1} u_1 = h)$, and the other entries are defined analogously. Then it is obvious that
\begin{align*}
\mathbold d_0(0) &= (0,0,0,1,0,0,1)^t, \\
\mathbold d_0(1) &= (0,1,1,0,1,1,0)^t, \\
\mathbold d_0(2) &= (1,0,0,0,0,0,0)^t.
\end{align*}
Moreover, we deduce from the recursive structure (Figure~\ref{figure:rectree}) that $\mathbold d_n(h) = \mathbold D \mathbold d_{n-1}(h)$, where $\mathbold D$ is the matrix
\[ \mathbold D = \frac1{150} \begin{pmatrix}
		50 & 50 & 50 &  0 &  0 &  0 &  0 \\
		25 & 25 & 25 &  0 &  0 & 75 &  0 \\
		25 & 25 & 25 &  0 & 75 &  0 &  0 \\
		 5 &  5 &  5 & 60 & 15 & 15 & 45 \\
		30 & 30 & 30 &  0 & 45 & 15 &  0 \\
		30 & 30 & 30 &  0 & 15 & 45 &  0 \\
		12 & 12 & 12 & 36 & 21 & 21 & 36
	\end{pmatrix}. \]
This matrix has eigenvalues $1,\frac35,\frac15,\frac1{15},\frac1{25},0,0$, and we easily find that
\begin{equation}\label{eq:degprob}
\begin{aligned}
\mathbold d_n(0) &= \tfrac{11}{28} \cdot \bigl(\tfrac35\bigr)^n \cdot (0,0,0,3,0,0,2)^t
			- \tfrac1{28} \cdot \bigl(\tfrac1{25}\bigr)^n \cdot (0,0,0,5,0,0,-6)^t, \\[3pt]
\mathbold d_n(1) &= \tfrac{11}{14} \cdot (1,1,1,1,1,1,1)^t
			- \tfrac2{7} \cdot \bigl(\tfrac35\bigr)^n \cdot (0,0,0,3,0,0,2)^t \\
		 &\quad + \tfrac1{14} \cdot \bigl(\tfrac1{15}\bigr)^n \cdot (-25,10,10,-4,3,3,3)^t
			+ \tfrac1{14} \cdot \bigl(\tfrac1{25}\bigr)^n \cdot (0,0,0,5,0,0,-6)^t, \\[3pt]
\mathbold d_n(2) &= \tfrac3{14} \cdot (1,1,1,1,1,1,1)^t
			- \tfrac3{28} \cdot \bigl(\tfrac35\bigr)^n \cdot (0,0,0,3,0,0,2)^t \\
		 &\quad - \tfrac1{14} \cdot \bigl(\tfrac1{15}\bigr)^n \cdot (-25,10,10,-4,3,3,3)^t
		 	- \tfrac1{28} \cdot \bigl(\tfrac1{25}\bigr)^n \cdot (0,0,0,5,0,0,-6)^t
\end{aligned}
\end{equation}
for $n \geq 1$. In particular, we see that the degree of a corner vertex is $1$ in a random spanning tree of $G_n$ with probability tending to $\frac{11}{14}$, and the degree is $2$ with probability tending to $\frac{3}{14}$.

If now $v \in VG_n$ is a vertex of level $k>0$, then there is a unique copy $H$ of $G_{n-k+1}$ in $G_n$ such that $v$ is the midpoint of one of its sides. The degree distribution of $v$ in a random spanning tree $T_n$ now only depends on $k$ and the type of the restriction of $T_n$ to $H$. For example, if $v$ is the midpoint of the horizontal side of $H$, and the restriction is of type \img5, then the probability that $v$ has degree $h$ in $T_n$ is
\begin{multline*}
\frac16 \sum_{\ell=0}^h \bigl(d_{n-k}(\img5,\ell) + d_{n-k}(\img6,\ell) + d_{n-k}(\img7,\ell) \bigl)
		d_{n-k}(\img3,h-\ell) \\
+ \frac1{18} \sum_{\ell=0}^h \bigl(d_{n-k}(\img5,\ell) + d_{n-k}(\img6,\ell) + d_{n-k}(\img7,\ell)\bigr) \\
	\times \bigl(d_{n-k}(\img5,h-\ell) + d_{n-k}(\img6,h-\ell) + d_{n-k}(\img7,h-\ell)\bigr),
\end{multline*}
where we set $d_n(\cdot,h)=0$ if $h>2$. It follows immediately that for any fixed $k>0$ (or even more generally, if $n-k \to \infty$), the probabilities of the possible degrees $1,2,3,4$ of a level-$k$ vertex converge to
\[ 0, \quad \frac{121}{196}, \quad \frac{33}{98}, \quad \frac{9}{196}, \]
respectively. Intuitively, this means that leaves typically only occur at high levels.

Let now $W_n(h)$ denote the number of vertices of degree $h$ in a random spanning tree $T_n$. We prove that $3^{-n}W_n(h) \to w(h)$ almost surely, where
\[ w(1) = \frac{10957}{26976}, \quad w(2) = \frac{6626035}{9090912}, \quad
	w(3) = \frac{2943139}{9090912}, \quad w(4) = \frac{124895}{3030304}. \]
Fix some $\alpha \in (\sqrt{3},3)$. For any $r \geq 0$, the number of copies of $G_{r+1}$ occurring in $G_n$ is $3^{n-r-1}$. By \eqref{eq:cheby}, the number of such copies which have type \img5, \img6 or \img7 is $\frac{53}{96} \cdot 3^{n-r-1} + O(\alpha^{n-r-1})$ with probability $1 - O((3/\alpha^2)^{n-r-1})$. The same is true for the types \img1, \img2, \img3 and type \img4, with the constant $\frac{53}{96}$ replaced by $\frac{19}{48}$ and $\frac{5}{96}$, respectively. Now the distribution of the degrees of the midpoints in each of the copies of $G_{r+1}$ only depends on the type, and the different copies are pairwise independent. Let $m_r(\img5,h)$ be the expectation of the random variable that counts how many of the three ``midpoints''
\[ \tfrac12(u_2+u_3), \qquad \tfrac12(u_3+u_1), \qquad \tfrac12(u_1+u_2)\]
have degree $h$ in a random spanning forest of type \img5 in $G_r$, and define $m_r(\img6,h)$, etc. analogously. By symmetry,
\begin{align*}
m_r(\img5,h) &= m_r(\img6,h) = m_r(\img7,h), \\
m_r(\img1,h) &= m_r(\img2,h) = m_r(\img3,h).
\end{align*}
By independence and another application of Chebyshev's inequality, we find that the total number of vertices of degree $h$ among all level-$(n-r)$ vertices in a random spanning tree $T_n$ is
\[ 3^{n-r-1} \bigl( \tfrac{53}{96} m_{r+1}(\img5,h)
			+ \tfrac{19}{48} m_{r+1}(\img1,h)
			+ \tfrac{5}{96} m_{r+1}(\img4,h) \bigr)
	+ O(\alpha^{n-r-1}) \]
for any $r \geq 0$ with probability $1 - O((3/\alpha^2)^{n-r-1})$. Since there are only $O(3^{n/2})$ vertices at levels $\leq n/2$, we can safely ignore them, and we obtain that the total number of vertices of degree $h$ in a random spanning tree $T_n$ is
\begin{align*}
W_n(h) &= \sum_{r=0}^{\lfloor n/2 \rfloor} 3^{n-r-1}
	\bigl( \tfrac{53}{96} m_{r+1}(\img5,h)
		+ \tfrac{19}{48} m_{r+1}(\img1,h)
		+ \tfrac{5}{96} m_{r+1}(\img4,h) \bigr)
	+ O(\alpha^n) \\
&= 3^n \sum_{r=0}^\infty 3^{-r-1}
	\bigl( \tfrac{53}{96} m_{r+1}(\img5,h)
		+ \tfrac{19}{48} m_{r+1}(\img1,h)
		+ \tfrac{5}{96} m_{r+1}(\img4,h) \bigr)
	+ O(\alpha^n)
\end{align*}
with probability $1 - O((3/\alpha^2)^{n/2})$, from which almost sure convergence of $3^{-n}W_n(h)$ follows immediately. It remains to find the values of the constants. Let us for instance determine $m_{r+1}(\img5,1)$:
\begin{align*}
m_{r+1}(\img5,1) &= \tfrac29 \bigl(d_r(\img5,1)+d_r(\img6,1)+d_r(\img7,1)\bigr)
				\bigl(d_r(\img5,0)+d_r(\img6,0)+d_r(\img7,0)\bigr) \\
		&\qquad + \tfrac13 \bigl(d_r(\img5,1)+d_r(\img6,1)+d_r(\img7,1)\bigr)
				\bigl(d_r(\img1,0)+d_r(\img2,0)\bigr) \\
		&\qquad + \tfrac13 \bigl(d_r(\img5,0)+d_r(\img6,0)+d_r(\img7,0)\bigr)
				\bigl(d_r(\img1,1)+d_r(\img2,1)\bigr)
\end{align*}
by the same argument that was used earlier to determine the probabilities of the different degrees. Using \eqref{eq:degprob} we find
\[ m_{r+1}(\img5,1) = \tfrac{1}{1176} \cdot 375^{-r}\cdot (33 \cdot 15^r - 5)^2, \]
and thus
\[ \sum_{r=0}^\infty 3^{-r-1} m_{r+1}(\img5,1) = \frac{49595}{166352}. \]
All other sums are obtained similarly. It follows that the proportion of vertices of degree $1,2,3,4$ in a random spanning tree $T_n$ converges almost surely to
\begin{gather*}
\frac{10957}{40464} \approx 0.270784, \qquad
\frac{6626035}{13636368} \approx 0.485909, \\[3pt]
\frac{2943139}{13636368} \approx 0.215830, \qquad
\frac{124895}{4545456} \approx 0.0274769,
\end{gather*}
respectively. These constants were already determined in \cite{chang2010structure} as the limits of the mean values, but our arguments show that we even have almost sure convergence.

\section{Loop-erased random walk on Sierpi\'nski graphs}
\label{section:lerw}

This section is devoted to the analysis of loop-erased random walks on Sierpi\'nski graphs and their limit process. Let us first recall some definitions, see for instance \cite{lawler2010random}. Let $G$ be a finite and connected graph. The \emph{(chronological) loop erasure} of a walk $x=(x_0,\dotsc,x_n)$ in $G$ yields a new walk $\LE(x)$ which is defined as follows:
\begin{itemize}
\item Set $\iota(0) = \max\{j\le n \,:\, x_j = x_0\}$.
\item If $\iota(k) < n$, then set $\iota(k+1) = \max\{j\le n \,:\, x_j = x_{\iota(k)+1}\}$,
	otherwise set $\iota(k+1)=n$.
\item If $K=\min\{k \,:\, \iota(k)=n\}$, then $\LE(x)=(x_{\iota(0)},\dotsc,x_{\iota(K)})$.
\end{itemize}
It is clear from the definition that $\LE(x)$ is self-avoiding.

\emph{Simple random walk} $(X_n)_{n\ge0}$ on a finite and connected graph $G$ is a Markov chain with state space $VG$ and transition probabilities $p(x,y)$ from vertex $x$ to vertex $y$ given by
\[ p(x,y) = \begin{cases}
	\frac{1}{\deg x} & \text{if $x$ and $y$ are adjacent,} \\
	0 & \text{otherwise.}
\end{cases} \]
For any $B\subseteq VG$, the \emph{hitting time} $\hit(B)$ is given by
\[ \hit(B) = \inf\{ n \,:\, X_n \in B \}. \]
Since $G$ is finite and connected, the hitting time $\hit(B)$ is almost surely finite. Fix a vertex $x\in VG$ and some set $B\subseteq VG$ with $x\notin B$ and consider simple random walk $(X_n)_{n\ge0}$ starting at $x$. The random self-avoiding walk $\LE((X_n)_{0\le n\le\hit(B)})$ is called \emph{loop-erased random walk} from $x$ to $B$. Figure~\ref{figure:walk} shows instances of loop-erased random walks from one corner vertex to another on $G_5$ and $G_8$, respectively. The aim of this and the following section is to study some of the properties of loop-erased walk on $G_n$ and its limit process.

\begin{figure}[htb]
\centering
\includegraphics[width=6cm]{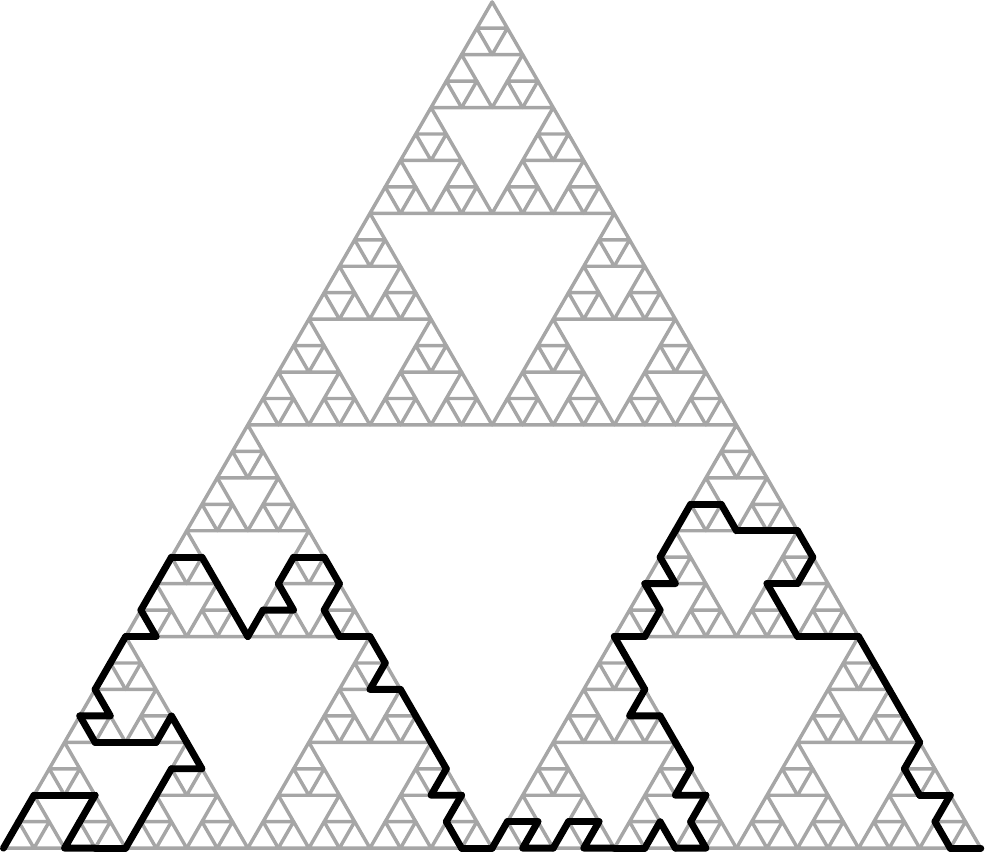}\qquad
\includegraphics[width=6cm]{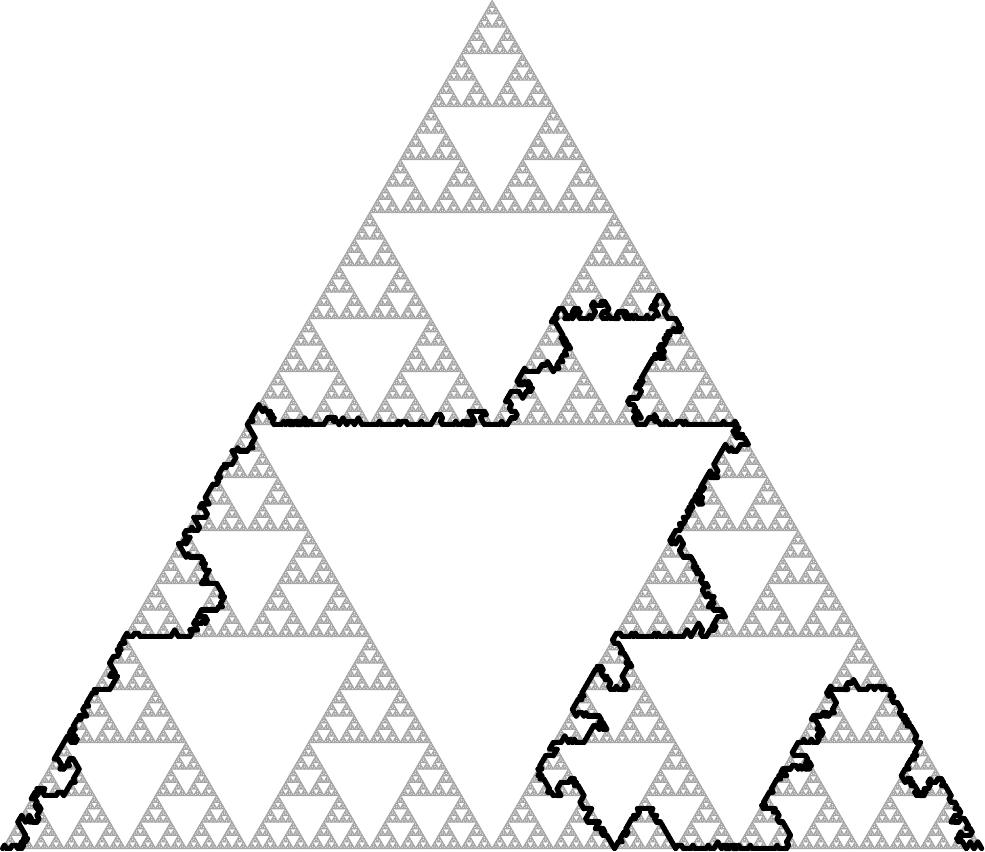}
\caption{Instances of loop-erased random walk on $G_5$ (left) and $G_8$ (right).}
\label{figure:walk}
\end{figure}

Uniform spanning trees and loop-erased random walk are strongly connected concepts. A particular application of this connection is \emph{Wilson's algorithm}~\cite{wilson1996generating}, which is an efficient method for sampling uniform spanning trees of a graph $G$. Fix some ordering of the vertex set $VG$, and let $\{(X_n^x)_{n\ge0} \,:\, x\in VG\}$ be a family of independent simple random walks on $G$, where $(X_n^x)_{n\ge0}$ starts at $x$. We define a sequence $T_0,T_1,\dotsc$ of random subtrees of $G$ as follows:
\begin{itemize}
\item $T_0$ consists of the least vertex (according to the selected ordering) in $G$ only.
\item If $T_k$ does not contain all vertices of $G$, let $x$ be the least vertex in $VG\setminus VT_k$ and define
	\[ T_{k+1} = T_k \cup \LE\bigl((X_n^x)_{0\le n\le\hit(VT_k)}\bigr). \]
	If $T_k$ is already spanning, then set $T_{k+1}=T_k$.
\end{itemize}
By construction there is a minimal (random) index $K$ (at most $\card{VG}$) such that $T_K=T_{K+1}$. Then $T_K$ is a uniform spanning tree of $G$. This idea can be reversed: suppose that $T$ is a uniform spanning tree of $G$, and fix two vertices $x,y\in VG$. The random self-avoiding walk $xTy$ turns out to have precisely the same distribution as a loop-erased random walk from $x$ to $y$: this is easy to see from Wilson's algorithm if we assume that $x$ and $y$ are the least and second-least vertices in our ordering.

In the following we use this connection to study loop-erased random walk on Sierpi\'nski graphs $G_n$ in more detail: For example, if $T$ is a uniformly chosen spanning tree on $G_n$, then $u_1T_nu_2$ is a loop-erased random walk in $G_n$ from $u_1$ to $u_2$. The description of $T_\infty$ as a labelled multi-type Galton-Watson tree can be extended to describe the evolution of loop-erased random walks $u_1T_0u_2, u_1T_1u_2, \dotsc$ by a labelled multi-type Galton-Watson tree with twelve types, which capture not only the structure of the spanning tree, but also the unique path between two corner vertices.

The set $\mathcal{\bar C} = \{\conn55, \conn52, \conn53, \conn61, \conn66, \conn63, \conn71, \conn72, \conn77, \conn11, \conn22, \conn33 \}$ encodes the twelve possible types (in a rather obvious way). Fix $k\in\{1,2,3\}$ and let $v,v'$ be the two vertices in $VG_0$ different from $u_k$. Let $f$ be an element in $\mathcal Q_\infty$, so that $v,v'$ are in the same component of the spanning forest $\Tr^\infty_0 f$. Then $v,v'$ are in the same component of $\Tr^\infty_n f$ for any $n\ge0$. For $n\ge0$ consider those $n$\ndash parts of $G_n$ which contain at least one edge of the self-avoiding walk $v(\Tr^\infty_n f)v'$, and let
\[ W_n(f,k) = \bigl\{ w\in\Words^n \,:\, E(v(\Tr^\infty_n f)v') \cap \psi_w(EG_0) \ne \emptyset \bigr\} \]
be the addresses of these $n$\ndash parts. Notice that $W_n(f,k)$ is naturally ordered by the fact that $v(\Tr^\infty_n f)v'$ walks along the $n$\ndash parts $\psi_w(G_0)$ with $w\in W_n(f,k)$. Furthermore,
\[ W(f,k) = \bigcup_{n\ge0} W_n(f,k) \]
induces a subtree of $\Words^*$, where each word in $W(f,k)$ has two or three children. Of course, $\type_w(f) \in \{\img5,\img6,\img7,\img1,\img2,\img3\}$ for any word $w\in W(f,k)$ (the walk has to enter and leave an $n$\ndash part at a corner, which is only possible if at least two of the corners are connected). Moreover, for any $w\in W_n(f,k)$, the restriction of $E(v(\Tr^\infty_n f)v')$ to $\psi_w(EG_0)$ consists of one edge $e=\{x,x'\}$ or two incident edges $e=\{x,y\}$ and $e'=\{y,x'\}$ for some $x,x',y\in\psi_w(VG_0)$. Define $\bar\kappa_w(f,k)$ to be the unique $i\in\{1,2,3\}$ such that $\psi_w(u_i) \neq x,x'$. We encode the two bits of information given by $\type_w(f)$ and $\bar\kappa_w(f,k)$ by one of the twelve types in $\mathcal{\bar C}$ in a natural way. Write $\bar\type_w(f,k)$ to denote this type of the $n$\ndash part $\psi_w(G_0)$ induced by $f$ and $k$, and set
\[ \mathbold{\bar\type}(f,k) = (\bar\type_w(f,k))_{w\in W(f,k)}. \]
For example, $\bar\type_w(f,k)=\conn55$ if $\type_w(f)=\img5$ and $\bar\kappa_w(f,k)=1$. Other types are assigned accordingly, see Table~\ref{table:conntypes}.

\begin{table}[htb]
\caption{The type $\bar\type_w(f,k)$, given $\type_w(f)$ and $\bar\kappa_w(f,k)$.}
\label{table:conntypes}
\centering
\begin{tabular}{@{}*{7}{c}@{}}
\toprule
                & \multicolumn{6}{c}{$\type_w(f)$} \\
                  \cmidrule(l){2-7}
$\bar\kappa_w(f,k)$ %
                & \img5   & \img6   & \img7   & \img1   & \img2   & \img3   \\
\cmidrule(r){1-1} \cmidrule(l){2-7}
$1$             & \conn55 & \conn61 & \conn71 & \conn11 &         &         \\
$2$             & \conn52 & \conn66 & \conn72 &         & \conn22 &         \\
$3$             & \conn53 & \conn63 & \conn77 &         &         & \conn33 \\
\bottomrule
\end{tabular}
\end{table}

In order to reconstruct the self-avoiding walk $v(\Tr^\infty_n f)v'$ from $\mathbold{\bar\type}(f,k)$, let $\bar\eta$ be the map from $\mathcal{\bar C}$ to the set of subgraphs of $G_0$ defined in Table~\ref{table:sub2}. Then
\[ v(\Tr^\infty_n f)v' = \bigcup_{w\in W_n(f,k)} \psi_w(\bar\eta(\bar\type_w(f,k))). \]
It is noteworthy that in general $\mathbold{\bar\type}(f,k)$ contains more information than all the self-avoiding walks $v(\Tr^\infty_n f)v'$ for $n\ge0$ (since it also contains additional structural information on the underlying spanning tree).

\begin{table}[htb]
\caption{The mappings $\bar\eta$ and $\bar\nu$.}
\label{table:sub2}
\centering
\begin{tabular}{@{}*{13}{c}@{}}
\toprule
$x$ %
 & \conn61 & \conn71 %
 & \conn52 & \conn72 %
 & \conn53 & \conn63 %
 & \conn55 & \conn66 & \conn77
 & \conn11 & \conn22 & \conn33 \\
\midrule
$\bar\eta(x)$ %
 & \tree nyyynn & \tree nyyynn %
 & \tree ynynyn & \tree ynynyn %
 & \tree yynnny & \tree yynnny %
 & \tree yyynyy & \tree yyyyny & \tree yyyyyn %
 & \tree nyyynn & \tree ynynyn & \tree yynnny \\
$\bar\nu(x)$ & 1 & 2 & 3 & 4 & 5 & 6 & 7 & 8 & 9 & 10 & 11 & 12  \\
\bottomrule
\end{tabular}
\end{table}

Last but not least, let $\bar\nu$ be the bijection from $\mathcal{\bar C}$ to $\{1,\dotsc,12\}$ given by Table~\ref{table:sub2}. In analogy to the previous section, we define the type-counting functions $\bar\type^\num_{i,n}(f,k) = \card{\{w\in W_n(f,k) \,:\, \bar\nu(\bar\type_w(f,k)) = i\}}$ and
\[ \mathbold{\bar\type}^\num_n(f,k) = \bigl(\bar\type^\num_{1,n}(f,k), \dotsc, \bar\type^\num_{12,n}(f,k)\bigr) \]
for $i\in\{1,\dotsc,12\}$ and $n\ge0$.

\begin{proposition}\label{proposition:tree2}
Let $\mathcal U_\infty$ be one of $\mathcal T_\infty^{}$, $\mathcal T_\infty^i$, or $\mathcal S_\infty^i$ for $i\in\{1,2,3\}$, and let $U_\infty$ be the corresponding random object. Let $k\in\{1,2,3\}$, and assume that $\Tr^\infty_0 U_\infty$ connects the two vertices in $VG_0\setminus\{u_k\}$.
\begin{enumerate}[\normalfont(1)]
\item The random tree
	\[ \mathbold{\bar\type}(U_\infty,k) = (\bar\type_w(U_\infty,k))_{w\in W(U_\infty,k)} \]
	is a labelled multi-type Galton-Watson tree with labels in $\Words^*$ and types in $\mathcal{\bar C}$.
	The type distribution of the root is given by the uniform distribution
	$\Unif\{\bar\type_\emptyword(f,k) \,:\, f\in\mathcal U_\infty\}$.
	Its offspring generation is given in Table~\ref{table:conngen}.
\item $(\mathbold{\bar\type}^\num_n(U_\infty,k))_{n\ge0}$ is a multi-type Galton-Watson process with twelve types,
	which is non-singular, positively regular, and supercritical.
	Using the abbreviations $s_1=\frac13(z_1+z_2+z_7)$, $s_2=\frac13(z_3+z_4+z_8)$, and $s_3=\frac13(z_5+z_6+z_9)$,
	the offspring generating function is given by
	\begin{align*}
	\mathbold{\bar f}(\mathbold z) = \Bigl(
	&\tfrac12 s_1(s_1+z_{10}),\;
	 \tfrac12 s_1(s_1+z_{10}), \vphantom{\Big(}\\
	&\tfrac12 s_2(s_2+z_{11}),\;
	 \tfrac12 s_2(s_2+z_{11}), \vphantom{\Big(}\\
	&\tfrac12 s_3(s_3+z_{12}),\;
	 \tfrac12 s_3(s_3+z_{12}), \vphantom{\Big(}\\
	&\tfrac12 s_1(s_3z_{11}+s_2z_{12}),\;
	 \tfrac12 s_2(s_3z_{10}+s_1z_{12}),\;
	 \tfrac12 s_3(s_2z_{10}+s_1z_{11}), \vphantom{\Big(}\\
	&\tfrac1{10} \bigl(3s_1^2 + 4s_1z_{10} + z_{10}(z_{10} + s_3z_{11} + s_2z_{12})\bigr), \vphantom{\Big(}\\
	&\tfrac1{10} \bigl(3s_2^2 + 4s_2z_{11} + z_{11}(s_3z_{10} + z_{11} + s_1z_{12})\bigr), \vphantom{\Big(}\\
	&\tfrac1{10} \bigl(3s_3^2 + 4s_3z_{12} + z_{12}(s_2z_{10} + s_1z_{11} + z_{12})\bigr) \Bigr).
	\end{align*}
	Its mean matrix $\mathbold{\bar M}$ is
	\[ \mathbold{\bar M}
		= \frac1{30} \begin{pmatrix}
			15 & 15 &  0 &  0 &  0 &  0 & 15 &  0 &  0 & 15 &  0 &  0 \\
			15 & 15 &  0 &  0 &  0 &  0 & 15 &  0 &  0 & 15 &  0 &  0 \\
			 0 &  0 & 15 & 15 &  0 &  0 &  0 & 15 &  0 &  0 & 15 &  0 \\
			 0 &  0 & 15 & 15 &  0 &  0 &  0 & 15 &  0 &  0 & 15 &  0 \\
			 0 &  0 &  0 &  0 & 15 & 15 &  0 &  0 & 15 &  0 &  0 & 15 \\
			 0 &  0 &  0 &  0 & 15 & 15 &  0 &  0 & 15 &  0 &  0 & 15 \\
			10 & 10 &  5 &  5 &  5 &  5 & 10 &  5 &  5 &  0 & 15 & 15 \\
			 5 &  5 & 10 & 10 &  5 &  5 &  5 & 10 &  5 & 15 &  0 & 15 \\
			 5 &  5 &  5 &  5 & 10 & 10 &  5 &  5 & 10 & 15 & 15 &  0 \\
			10 & 10 &  1 &  1 &  1 &  1 & 10 &  1 &  1 & 24 &  3 &  3 \\
			 1 &  1 & 10 & 10 &  1 &  1 &  1 & 10 &  1 &  3 & 24 &  3 \\
			 1 &  1 &  1 &  1 & 10 & 10 &  1 &  1 & 10 &  3 &  3 & 24
		\end{pmatrix}, \]
	whose dominating eigenvalue $\bar\alpha$ is
	$\frac43+\frac1{15}\sqrt{205} \approx 2.287855$.
	The corresponding right and left eigenvectors are
	\begin{align*}
	\mathbold{\bar v}_R
		&= (a_1,a_1,a_1,a_1,a_1,a_1,a_2,a_2,a_2,a_3,a_3,a_3)^t, \\
	\mathbold{\bar v}_L
		&= (a_4,a_4,a_4,a_4,a_4,a_4,a_4,a_4,a_4,a_5,a_5,a_5),
	\end{align*}
	where
	\begin{gather*}
		a_1 = \tfrac{11}{26} + \tfrac{17}{533}\sqrt{205}, \qquad
		a_2 = \tfrac{17}{26} + \tfrac{49}{1066}\sqrt{205}, \qquad
		a_3 = \tfrac12 + \tfrac{13}{410}\sqrt{205}, \\[4pt]
		a_4 = \tfrac1{18}\sqrt{205} - \tfrac{13}{18}, \qquad
		a_5 = \tfrac52 - \tfrac16\sqrt{205}.
	\end{gather*}
	The vectors $\mathbold{\bar v}_R$ and $\mathbold{\bar v}_L$ are normalized
	so that $\mathbold{\bar v}_L \cdot \mathbold{\bar v}_R = 1$ and $\norm{\mathbold{\bar v}_L}_1=1$.
\item There is a non-negative random variable $\bar\theta(U_\infty,k)$ such that
	\[ \bar\alpha^{-n} \mathbold{\bar\type}^\num_n(U_\infty^{},k) \to
		\mathbold{\bar v}_L \bar\theta(U_\infty,k) \]
	almost surely. The distribution of $\bar\theta(U_\infty,k)$ has a continuous density function,
	which is strictly positive on the set of positive reals and zero elsewhere.
	In particular, $\bar\theta(U_\infty,k)$ is almost surely positive.
	By symmetry, there are four different limit distributions, one for each of the following groups:
	\begin{gather*}
		\{ \bar\theta(T_\infty,1), \bar\theta(T_\infty,2), \bar\theta(T_\infty,3) \}, \\
		\{ \bar\theta(T_\infty^1,2), \bar\theta(T_\infty^1,3),
		   \bar\theta(T_\infty^2,1), \bar\theta(T_\infty^2,3),
		   \bar\theta(T_\infty^3,1), \bar\theta(T_\infty^3,2) \}, \\
		\{ \bar\theta(T_\infty^1,1), \bar\theta(T_\infty^2,2), \bar\theta(T_\infty^3,3) \}, \\
		\{ \bar\theta(S_\infty^1,1), \bar\theta(S_\infty^2,2), \bar\theta(S_\infty^3,3) \}.
	\end{gather*}
	We write $\bar\theta_0, \bar\theta_1, \bar\theta_2, \bar\theta_3$ for random variables
	having the same distribution as a random variable in the respective group (ordered as above).
	Their expected values are $\Expect(\bar\theta_0)=\frac23 a_1+\frac13 a_2$,
	$\Expect(\bar\theta_1)=a_1$, $\Expect(\bar\theta_2)=a_2$ and $\Expect(\bar\theta_3)=a_3$, respectively.
	Moreover, $\Prob_{\bar\theta_0} = \frac23 \Prob_{\bar\theta_1} + \frac13 \Prob_{\bar\theta_2}$.
\end{enumerate}
\end{proposition}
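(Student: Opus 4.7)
The plan is to build on the labelled Galton-Watson structure of $\mathbold\type(U_\infty)$ from Proposition~\ref{proposition:tree1}, augmenting each visited cell $w\in W(f,k)$ with its corner-avoidance label $\bar\kappa_w$. For part~(1) I would first observe that if $w\in W(f,k)$ then the path $v(\Tr^\infty_n f)v'$ must enter and leave $\psi_w(G_0)$ through two distinct corners, which forces $\type_w(f)\in\{\img5,\img6,\img7,\img1,\img2,\img3\}$ and uniquely specifies the avoided corner. The key observation is that given the pair $(\type_w,\bar\kappa_w)$ together with $(\type_{w1}(f),\type_{w2}(f),\type_{w3}(f))$, one can read off deterministically which subcells are traversed, the order of traversal, and the entry/exit corners in each traversed subcell. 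Combined with the conditional uniformity \eqref{eq:gwdist} of the subtype triple given the parent type, this both establishes the labelled multi-type Galton-Watson property with types in $\mathcal{\bar C}$ and reduces the derivation of the twelve offspring distributions to a direct enumeration over the triples in $\mathcal D(\type_w)$. Executing this case analysis yields the offspring generating function $\mathbold{\bar f}$ and populates the offspring table.

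For part~(2) the mean matrix $\mathbold{\bar M}$ is then read off by differentiation at $\mathbold 1$, non-singularity is immediate because $\mathbold{\bar f}$ is nonlinear, and positive regularity is verified by a short direct check that $\mathbold{\bar M}^k$ has strictly positive entries for some small $k$. The real work is the spectral analysis. I would exploit the $S_3$ symmetry permuting corner labels, which acts on the twelve types along three orbits of sizes $6$, $3$, $3$; decomposing along $S_3$-isotypic components block-diagonalizes $\mathbold{\bar M}$ and places the Perron eigenvalue in the $3$-dimensional trivial-representation block. This reduces the characteristic polynomial to a cubic (which factors as a linear times a quadratic), yielding the closed form $\bar\alpha=\frac43+\frac1{15}\sqrt{205}$, whereupon $\mathbold{\bar v}_R$ and $\mathbold{\bar v}_L$ are obtained by elementary linear algebra and then renormalised to satisfy $\mathbold{\bar v}_L\cdot\mathbold{\bar v}_R=1$ and $\norm{\mathbold{\bar v}_L}_1=1$.

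Part~(3) is a direct application of the Kesten-Stigum theorem for supercritical, positively regular, non-singular multi-type Galton-Watson processes (cf.~Theorem~2.4.1 of \cite{mode1971multitype}). Since offspring vectors are bounded the $X\log X$ condition holds trivially, so there exists a non-negative limit $\bar\theta(U_\infty,k)$ with $\bar\alpha^{-n}\mathbold{\bar\type}^\num_n(U_\infty,k)\to\mathbold{\bar v}_L\bar\theta(U_\infty,k)$ almost surely, and $\Expect(\bar\theta(U_\infty,k))$ equals the root-index entry of $\mathbold{\bar v}_R$. Almost-sure positivity follows from non-extinction: every type in $\mathcal{\bar C}$ produces at least two children in $\mathcal{\bar C}$, so the tree is infinite almost surely. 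Continuity and strict positivity of the density on $(0,\infty)$ follow from Dubuc-type arguments applied to the functional equation satisfied by the Laplace transform of $\bar\theta$. The grouping into four equivalence classes of distributions is immediate from the action of $S_3$ on the configurations $(U_\infty,k)$, and the mixture identity $\Prob_{\bar\theta_0}=\frac23\Prob_{\bar\theta_1}+\frac13\Prob_{\bar\theta_2}$ follows by conditioning on the independent choice $B\sim\Unif\{1,2,3\}$ in $T_\infty=T_\infty^B$: for fixed $k$ the event $B=k$ contributes a sample distributed as $\bar\theta_2$, while each of the two events $B\ne k$ contributes a sample distributed as $\bar\theta_1$.

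The main obstacle is twofold. First, the combinatorial enumeration underlying parts~(1) and~(2) is easy to get wrong in detail: each of the twelve parent types contributes a distinct offspring distribution, and keeping track of traversal order, entry/exit corners, and subcell types demands careful bookkeeping. Second, the continuous-density claim in part~(3), while standard in principle, requires more machinery than the pure almost-sure convergence, typically a smoothness analysis of the fixed-point equation for the Laplace transform of $\bar\theta$. By contrast, the spectral computation in part~(2) is the cleanest step once the $S_3$ symmetry is exploited to block-diagonalise $\mathbold{\bar M}$.
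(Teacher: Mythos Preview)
Your proposal is correct and follows essentially the same route as the paper, which gives only a three-sentence proof: part~(1) from Proposition~\ref{proposition:tree1}, part~(2) as a routine consequence of part~(1), and part~(3) by citing Theorems~1.8.2 and~1.9.1 in \cite{mode1971multitype}. Your outline simply spells out what the paper leaves implicit, and your $S_3$-symmetry reduction for the spectral analysis is a clean way to organise the computation that the paper does not mention; the only minor discrepancy is that you cite Theorem~2.4.1 of \cite{mode1971multitype} (used elsewhere in the paper for the reducible component-size process) rather than the pair 1.8.2/1.9.1 appropriate to the positively regular case, and you invoke Dubuc-type smoothness arguments where the paper appeals directly to Theorem~1.9.1 for the density statement.
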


\begin{table}[htb]
\caption{Offspring generation of $\mathbold{\bar\type}(U_\infty,k)$ for three types.
	The remaining types are obtained by symmetry taking suffixes into account.}
\label{table:conngen}
\centering
\begin{tabular}{@{}clc@{}}
\toprule
Type & \multicolumn{1}{c}{Offspring types} & Probability \\
 & \multicolumn{1}{c}{with suffixes $(1,2)$ or $(1,2,3)$} & \\
\midrule
\multirow{4}{10pt}{\conn53}
 & $(\conn53,\conn33)$, $(\conn63,\conn33)$, $(\conn77,\conn33)$ & \multirow{1}{7pt}{$\tfrac16$} \\
\cmidrule(){2-3}
 & $(\conn53,\conn53)$, $(\conn53,\conn63)$, $(\conn53,\conn77)$, & \multirow{3}{11pt}{$\tfrac1{18}$} \\
 & $(\conn63,\conn53)$, $(\conn63,\conn63)$, $(\conn63,\conn77)$, & \\
 & $(\conn77,\conn53)$, $(\conn77,\conn63)$, $(\conn77,\conn77)$  & \\
\midrule
\multirow{6}{10pt}{\conn77}
 & $(\conn22,\conn55,\conn53)$, $(\conn22,\conn55,\conn63)$, $(\conn22,\conn55,\conn77)$, %
	& \multirow{6}{11pt}{$\tfrac1{18}$} \\
 & $(\conn22,\conn61,\conn53)$, $(\conn22,\conn61,\conn63)$, $(\conn22,\conn61,\conn77)$, & \\
 & $(\conn22,\conn71,\conn53)$, $(\conn22,\conn71,\conn63)$, $(\conn22,\conn71,\conn77)$, & \\
 & $(\conn52,\conn11,\conn53)$, $(\conn52,\conn11,\conn63)$, $(\conn52,\conn11,\conn77)$, & \\
 & $(\conn66,\conn11,\conn53)$, $(\conn66,\conn11,\conn63)$, $(\conn66,\conn11,\conn77)$, & \\
 & $(\conn72,\conn11,\conn53)$, $(\conn72,\conn11,\conn63)$, $(\conn72,\conn11,\conn77)$  & \\
\midrule
\multirow{8}{10pt}{\conn33}
 & $(\conn52,\conn11,\conn33)$, $(\conn66,\conn11,\conn33)$, $(\conn72,\conn11,\conn33)$, %
	& \multirow{5}{11pt}{$\tfrac1{30}$} \\
 & $(\conn22,\conn55,\conn33)$, $(\conn22,\conn61,\conn33)$, $(\conn22,\conn71,\conn33)$, & \\
 & $(\conn53,\conn53)$, $(\conn53,\conn63)$, $(\conn53,\conn77)$, & \\
 & $(\conn63,\conn53)$, $(\conn63,\conn63)$, $(\conn63,\conn77)$, & \\
 & $(\conn77,\conn53)$, $(\conn77,\conn63)$, $(\conn77,\conn77)$  & \\
\cmidrule(){2-3}
 & $(\conn53,\conn33)$, $(\conn63,\conn33)$, $(\conn77,\conn33)$, & \multirow{2}{11pt}{$\tfrac1{15}$} \\
 & $(\conn33,\conn53)$, $(\conn33,\conn63)$, $(\conn33,\conn77)$  & \\
\cmidrule(){2-3}
 & $(\conn33,\conn33)$ & \multirow{1}{11pt}{$\tfrac1{10}$} \\
\bottomrule
\end{tabular}
\end{table}

\begin{proof}
The first part of this result follows from Proposition~\ref{proposition:tree1}. The second is a consequence of the first: the details are not difficult to verify. For the last part, see Theorem~1.8.2 and Theorem~1.9.1 in \cite{mode1971multitype}.
\end{proof}

\begin{remark}\label{remark:coll2}
Similar to Remark~\ref{remark:coll1}, we can collapse three groups of types into new types:
\begin{itemize}
\item \conn55, \conn61, \conn71 become \conn01,
\item \conn52, \conn66, \conn72 become \conn02,
\item \conn53, \conn63, \conn77 become \conn03.
\end{itemize}
Fix again some $k\in\{1,2,3\}$, and let $f\in\mathcal Q_\infty$ be such that the vertices in $VG_0\setminus\{u_k\}$ are in the same component of $f$. Now for $w\in W(f,k)$, set
\[ \tilde\type_w(f,k)
	= \begin{cases}
		\conn01 & \text{if } \bar\type_w(f,k) \in \{\conn55,\conn61,\conn71\}, \\
		\conn02 & \text{if } \bar\type_w(f,k) \in \{\conn52,\conn66,\conn72\}, \\
		\conn03 & \text{if } \bar\type_w(f,k) \in \{\conn53,\conn63,\conn77\}, \\
		\bar\type_w(f,k) & \text{otherwise},
	\end{cases} \]
and $\mathbold{\tilde\type}(f,k) = (\tilde\type_w(f,k))_{w\in W(f,k)}$. If $U_\infty$ is now one of $T_\infty^{}$, $T_\infty^i$, $S_\infty^i$ for $i\in\{1,2,3\}$, so that the vertices in $VG_0\setminus\{u_k\}$ are in the same component of $\Tr^\infty_0 U_\infty$, then the random tree $\mathbold{\tilde\type}(U_\infty,k)$ is a labelled multi-type Galton-Watson tree with types in $\{\conn01,\conn02,\conn03,\conn11,\conn22,\conn33\}$.

In order to sample a loop-erased random walk in $G_n$ from $u_1$ to $u_2$, we can simulate the $n$\ndash th generation of $\mathbold{\bar\type}(T_\infty,3)$. At first we have to choose one of \conn53, \conn63, \conn77 with equal probability as the type of the ancestor $\emptyword$. As in Remark~\ref{remark:coll1} we may postpone this choice to the $n$\ndash th generation. To do so, consider the $n$\ndash th generation of the simplified tree $\mathbold{\tilde\type}(T_\infty,3)$. Independently replace each occurrence of
\begin{itemize}
\item \conn01 by one of \conn55, \conn61, \conn71,
\item \conn02 by one of \conn52, \conn66, \conn72,
\item \conn03 by one of \conn53, \conn63, \conn77,
\end{itemize}
always with equal probabilities. Then the modified $n$\ndash th generation of $\mathbold{\tilde\type}(T_\infty,3)$ describes a loop-erased random walk in $G_n$ from $u_1$ to $u_2$.
\end{remark}

\begin{remark}\label{remark:count}
We set
\[ \bar c(x) = \begin{cases}
		1 & \text{if } x \in \{\conn61,\conn71,\conn52,\conn72,\conn53,\conn63\}, \\
		2 & \text{if } x \in \{\conn55,\conn66,\conn77\}, \\
		3 & \text{if } x \in \{\conn11,\conn22,\conn33\},
	\end{cases} \]
and once again, we introduce type counters: for $i\in\{1,2,3\}$ and $n\ge0$, define $\bar c^\num_{i,n}(f,k) = \card{\{w\in W_n(f,k) \,:\, \bar c(\bar\type_w(f))=i\}}$ and
\begin{align*}
\mathbold{\bar c}^\num_n(f,k)
	&= \bigl(\bar c^\num_{1,n}(f,k), \bar c^\num_{2,n}(f,k), \bar c^\num_{3,n}(f,k)\bigr), \\
\mathbold{\tilde c}^\num_n(f,k)
	&= \bigl(\bar c^\num_{1,n}(f,k) + \bar c^\num_{2,n}(f,k), \bar c^\num_{3,n}(f,k)\bigr).
\end{align*}
Then $\mathbold{\bar c}^\num_n(f,k)$ and $\mathbold{\tilde c}^\num_n(f,k)$ count the occurrences of types up to symmetry in the $n$\ndash th generation of $\mathbold{\bar\type}(f,k)$ and $\mathbold{\tilde\type}(f,k)$, respectively.

For a random object $U_\infty$ (one of $T_\infty^{}$, $T_\infty^i$, $S_\infty^i$) and suitable $k$, $(\mathbold{\bar c}^\num_n(U_\infty,k))_{n\ge0}$ and $(\mathbold{\tilde c}^\num_n(U_\infty,k))_{n\ge0}$ are multi-type Galton-Watson processes with offspring generating functions
\begin{equation}\label{eq:gdef}
\mathbold{\bar g}(z_1,z_2,z_3) = \bigl(
	\tfrac12 s(s+z_3), \, s^2 z_3, \, \tfrac3{10} s^2 + \tfrac15 sz_3(2+z_3) + \tfrac1{10} z_3^2 \bigr),
\end{equation}
where $s=\frac23z_1+\frac13z_2$, and
\begin{equation}\label{eq:tildegdef}
 \mathbold{\tilde g}(z_1,z_2) = \bigl(
	\tfrac13 z_1(z_1+z_2+z_1z_2), \, \tfrac3{10} z_1^2 + \tfrac15 z_1z_2(2+z_2) + \tfrac1{10} z_2^2 \bigr),
\end{equation}
respectively. If we set
\begin{equation}\label{eq:sigmadef}
\mathbold\Sigma(z_1,z_2,z_3)
	= \bigl(\PGF(\mathbold{\bar\type}^\num_0(T_\infty^{},k),\mathbold z),
	        \PGF(\mathbold{\bar\type}^\num_0(S_\infty^k,k),\mathbold z)\bigr)
	= \bigl(\tfrac23 z_1 + \tfrac13 z_2, z_3\bigr),
\end{equation}
then $\mathbold\Sigma\circ\mathbold{\bar g} = \mathbold{\tilde g}\circ\mathbold\Sigma$. Note also that $\mathbold{\bar c}^\num_n(U_\infty,k)$ and $\mathbold{\tilde c}^\num_n(U_\infty,k)$ depend linearly on $\mathbold{\bar\type}^\num_n(U_\infty,k)$, hence Proposition~\ref{proposition:tree2} implies
\begin{align*}
\bar\alpha^{-n}\mathbold{\bar c}^\num_n(T_\infty^{},k) &\to (6a_4,3a_4,3a_5) \bar\theta(T_\infty^{},k), &
	\bar\alpha^{-n}\mathbold{\bar c}^\num_n(S_\infty^k,k) &\to (6a_4,3a_4,3a_5) \bar\theta(S_\infty^k,k), \\
\bar\alpha^{-n}\mathbold{\tilde c}^\num_n(T_\infty^{},k) &\to (9a_4,3a_5) \bar\theta(T_\infty^{},k), &
	\bar\alpha^{-n}\mathbold{\tilde c}^\num_n(S_\infty^k,k) &\to (9a_4,3a_5) \bar\theta(S_\infty^k,k)
\end{align*}
almost surely.
\end{remark}

\begin{remark}\label{remark:laplace}
Using the previous remark, it is possible to describe the distribution of $\bar\theta_0,\bar\theta_1,\bar\theta_2,\bar\theta_3$. Let
\[ \mathbold{\bar\phi}(z) = (\Expect(e^{z\bar\theta_1}),\Expect(e^{z\bar\theta_2}),\Expect(e^{z\bar\theta_3}))
	\qquad\text{and}\qquad
	\mathbold{\tilde\phi}(z) = (\Expect(e^{z\bar\theta_0}),\Expect(e^{z\bar\theta_3})) \]
be the moment generating functions of $(\bar\theta_1,\bar\theta_2,\bar\theta_3)$ and $(\bar\theta_0,\bar\theta_3)$, respectively. These two functions exists at least for $z\in\C$ with $\operatorname{Re}(z)\le0$. Furthermore, it is well known that
\[ \mathbold{\bar\phi}(\bar\alpha z) = \mathbold{\bar g}(\mathbold{\bar\phi}(z))
	\qquad\text{and}\qquad
	\mathbold{\tilde\phi}(\bar\alpha z) = \mathbold{\tilde g}(\mathbold{\tilde\phi}(z)). \]
holds whenever both sides are finite, see for instance Theorem~1.8.1 of \cite{mode1971multitype}. Since $\mathbold{\bar g}$ and $\mathbold{\tilde g}$ are both polynomials, the moment generating functions $\mathbold{\bar\phi}$ and $\mathbold{\tilde\phi}$ exist for all $z\in\C$ and are entire functions, see \cite{poincare1890classe}. Furthermore, by iterating the offspring generating function it is possible to approximate the densities of $\bar\theta_0,\bar\theta_1,\bar\theta_2,\bar\theta_3$, see Figure~\ref{figure:density}.
\end{remark}

\begin{figure}[htb]
\def\plotdensity#1{%
\subfloat[density of $\bar\theta_#1$]{
\begin{tikzpicture}[scale=1.2]
	\scope
		\clip (0,0) rectangle (3.0,2.5);
		\draw plot[smooth] file {dens#1.tab};
	\endscope
	\draw[very thin,->] (0,0) -- (3.2,0);
	\draw[very thin,->] (0,0) -- (0,2.7);
	\foreach \x in {0,0.5,...,3.0} { \draw[very thin] (\x,-0.05) node[below,font=\tiny] {$\x$} -- (\x,0.05); }
	\foreach \y in {0,0.5,...,2.5} { \draw[very thin] (-0.05,\y) node[left,font=\tiny] {$\y$} -- (0.05,\y); }
\end{tikzpicture}}}
\centering
\plotdensity0
\qquad
\plotdensity1
\\[5pt]
\plotdensity2
\qquad
\plotdensity3
\caption{A plot of the densities of $\bar\theta_i$ for $i\in\{0,1,2,3\}$. The densities are approximated by $n=7$ iterations of the offspring generating function $\bar g$.}
\label{figure:density}
\end{figure}

In the following lemma, we prove some estimates for the moment generating functions of $\bar\theta_0,\dotsc,\bar\theta_3$, which lead to estimates for the tails of the distributions. Let us remark that there exist general results concerning tail probabilities (see for instance Jones~\cite{jones2004large}), but our situation does not satisfy the necessary conditions of these results. Thus we follow the arguments in \cite[Proposition~3.1]{barlow1988brownian} and \cite[Proposition~4.2]{kumagai1993construction}. Let the constants $\bar\gamma_\ell$ and $\bar\gamma_r$ be defined by
\[ \bar\gamma_\ell = \frac{\log2}{\log\bar\alpha} \approx 0.837524 \qquad\text{and}\qquad
	\bar\gamma_r = \frac{\log3}{\log\bar\alpha} \approx 1.32744. \]
Thus $-\bar\gamma_\ell/(1-\bar\gamma_\ell)\approx5.154759$ and $\bar\gamma_r/(1-\bar\gamma_r)\approx4.053954$. These constants play an important role in the following lemma:

\begin{lemma}\label{lemma:mg-bounds}
There are constants $C_{1,\ell},C_{2,\ell} > 0$ such that
\[ e^{-C_{1,\ell} \abs{z}^{\bar\gamma_\ell}}
	\le \Expect(e^{z\bar\theta_i})
	\le e^{-C_{2,\ell} \abs{z}^{\bar\gamma_\ell}}
	\rlap{$\quad (i\in\{0,1,2,3\})$} \]
for all $z\le-1$. The upper bounds also hold for $z\in\C$ with $\Re z\le0$ and $\abs{z}\ge1$ (after taking absolute values). Analogously, there are constants $C_{1,r},C_{2,r} > 0$ such that
\[ e^{C_{1,r} z^{\bar\gamma_r}}
	\le \Expect(e^{z\bar\theta_i})
	\le e^{C_{2,r} z^{\bar\gamma_r}}
	\rlap{$\quad (i\in\{0,1,2,3\})$} \]
for all sufficiently large $z\ge0$ (for instance if $\Expect(e^{z\bar\theta_i})\ge4$ for $i\in\{1,2,3,4\}$). As a consequence the following statements hold:
\begin{itemize}
\item There are constants $C_{3,\ell},C_{4,\ell},C_{5,\ell},C_{6,\ell}>0$ such that
	\[ C_{3,\ell} \exp(-C_{4,\ell} s^{-\bar\gamma_\ell/(1-\bar\gamma_\ell)}) \le \Prob(\bar\theta_i \le s)
		\le C_{5,\ell} \exp(-C_{6,\ell} s^{-\bar\gamma_\ell/(1-\bar\gamma_\ell)}) \]
	for all $s\ge0$ and all $i\in\{0,1,2,3\}$.
\item There are constants $C_{3,r},C_{4,r},C_{5,r},C_{6,r}>0$ such that
	\[ C_{3,r} \exp(-C_{4,r} s^{\bar\gamma_r/(1-\bar\gamma_r)}) \le \Prob(\bar\theta_i \ge s)
		\le C_{5,r} \exp(-C_{6,r} s^{\bar\gamma_r/(1-\bar\gamma_r)}) \]
	for all $s\ge0$ and all $i\in\{0,1,2,3\}$.
\item The random variables $\bar\theta_0,\bar\theta_1,\bar\theta_2,\bar\theta_3$ have densities in $C^\infty$.
\end{itemize}
\end{lemma}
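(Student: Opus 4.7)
The starting point is the pair of functional equations
\[ \mathbold{\bar\phi}(\bar\alpha z) = \mathbold{\bar g}(\mathbold{\bar\phi}(z)) \qquad\text{and}\qquad
   \mathbold{\tilde\phi}(\bar\alpha z) = \mathbold{\tilde g}(\mathbold{\tilde\phi}(z)), \]
so the strategy throughout is to read off the local behaviour of $\mathbold{\bar g}$ (resp.\ $\mathbold{\tilde g}$) and iterate. Near the origin, every component of $\mathbold{\bar g}$ is a polynomial with no constant or linear terms, and the lowest-order nonvanishing pieces are of total degree~$2$ (coming from the $s^2$ and $s z_3$ contributions). Near infinity, the dominating terms are of degree~$3$ (from $s^2 z_3$ and $s z_3^2$). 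These two exponents $2$ and $3$ are exactly what produce $\bar\gamma_\ell=\log 2/\log\bar\alpha$ and $\bar\gamma_r=\log 3/\log\bar\alpha$ after matching $\bar\alpha^{\bar\gamma}=2$ resp.\ $\bar\alpha^{\bar\gamma}=3$.

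First I would establish the left-side bounds for $z\le -1$ (real) and then extend to complex $z$ with $\Re z\le 0$. All three coordinates of $\mathbold{\bar\phi}(z)$ lie in $(0,1)$ there and are comparable (each is sandwiched between two of the others up to a multiplicative constant, since $\mathbold{\bar g}$ ``mixes'' the three coordinates). So it suffices to control, say, $\bar\phi_1(z)$. Setting $h(z)=-\log\bar\phi_1(z)$, the quadratic lower bound $\bar g_1(\mathbold y)\ge c\|\mathbold y\|^2$ on a neighbourhood of $0$ gives
\[ h(\bar\alpha z) \le 2\,h(z) + C_1, \]
whereas the trivial quadratic upper bound $\bar g_1(\mathbold y)\le C\|\mathbold y\|^2$ gives
\[ h(\bar\alpha z) \ge 2\,h(z) - C_2. \]
Iterating both inequalities $n$ times from a base value of $|z|\in[1,\bar\alpha]$ yields
\[ c_1\cdot 2^n \le h(\bar\alpha^n z) \le c_2\cdot 2^n, \]
which is the desired $|z|^{\bar\gamma_\ell}$ behaviour since $2^n=(\bar\alpha^n)^{\bar\gamma_\ell}$. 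The upper bound for complex $z$ is obtained by the same induction, using $|\bar g_i(\mathbold y)|\le C\|\mathbold y\|^2$ and the contraction $|\bar\phi_i(z)|\le \bar\phi_i(\Re z)$.

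For the right-side bounds, i.e.\ $z\ge 0$ large, the same game is played at infinity: the components of $\mathbold{\bar\phi}(z)$ are again comparable (all three go to $\infty$ together because $\mathbold{\bar g}$ is positively regular), the degree-$3$ dominant terms give
\[ c_1\bar\phi_1(z)^3 \le \bar\phi_1(\bar\alpha z) \le c_2\bar\phi_1(z)^3 \]
once $\bar\phi_1(z)$ is large enough, and the induction on $H(z)=\log\bar\phi_1(z)$ produces $c_1\cdot 3^n\le H(\bar\alpha^n z_0)\le c_2\cdot 3^n$, i.e.\ exponent $\bar\gamma_r=\log 3/\log\bar\alpha$. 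The $\tilde\phi$ estimates are handled identically using $\mathbold{\tilde g}$, and the bounds for $\bar\theta_0$ then follow from $\Prob_{\bar\theta_0}=\tfrac23\Prob_{\bar\theta_1}+\tfrac13\Prob_{\bar\theta_2}$.

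Passing from the MGF bounds to tail bounds is standard Chernoff optimization. For the upper right tail, $\Prob(\bar\theta_i\ge s)\le e^{-sz}\Expect(e^{z\bar\theta_i})\le \exp(-sz+C z^{\bar\gamma_r})$; optimizing in $z>0$ gives the exponent $s^{\bar\gamma_r/(\bar\gamma_r-1)}$. For the left tail, $\Prob(\bar\theta_i\le s)\le e^{|z|s}\Expect(e^{-|z|\bar\theta_i})\le \exp(|z|s-C|z|^{\bar\gamma_\ell})$; optimizing in $|z|>0$ gives the exponent $s^{-\bar\gamma_\ell/(1-\bar\gamma_\ell)}$. The matching lower bounds on the tails come from Paley–Zygmund-type arguments: the lower bounds on the MGFs together with smoothness rule out concentration outside the claimed envelopes. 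Finally, the complex upper bound $|\mathbold{\bar\phi}(z)|\le \exp(-C|z|^{\bar\gamma_\ell})$ on $\Re z\le 0$ makes $\mathbold{\bar\phi}$ integrable on every vertical line together with all its polynomial multiples, so Fourier inversion produces a density in $C^\infty$.

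The main obstacle will be the right-tail induction at infinity: one must prove that all components of $\mathbold{\bar\phi}$ really do grow at the same rate before the degree-$3$ dominance kicks in, otherwise the factor $3$ in the recursion is lost to lower-order cross terms. This requires a careful a priori control of the ratios $\bar\phi_i(z)/\bar\phi_j(z)$, using the positive-regularity structure of $\mathbold{\bar M}$ (equivalently, the fact that each component of $\mathbold{\bar g}$ contains a genuinely cubic monomial in each of the three coordinates). The remaining arguments, while technical, follow the template of \cite[Prop.~3.1]{barlow1988brownian} and \cite[Prop.~4.2]{kumagai1993construction} cited in the paper.
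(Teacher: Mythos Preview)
Your iteration strategy is the same as the paper's, and most of the argument is sound, but there is one genuine gap on the left-tail side.

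You assert that for $z\le -1$ the three coordinates $\bar\phi_1,\bar\phi_2,\bar\phi_3$ are mutually comparable up to multiplicative constants, because $\mathbold{\bar g}$ ``mixes'' the coordinates. This is false. Look at $\bar g_2(z_1,z_2,z_3)=s^2 z_3$ with $s=\tfrac23 z_1+\tfrac13 z_2$: it is homogeneous of degree~$3$, with no quadratic part at all. Your claim that ``the lowest-order nonvanishing pieces are of total degree~$2$'' therefore fails for the second component. Concretely, once you know $\bar\phi_1,\bar\phi_3\asymp \exp(-c|z|^{\bar\gamma_\ell})$, the relation $\bar\phi_2(\bar\alpha z)=s(z)^2\bar\phi_3(z)$ forces $\bar\phi_2(z)\asymp\exp(-\tfrac32 c|z|^{\bar\gamma_\ell})$, which is \emph{not} within a multiplicative constant of $\bar\phi_1$. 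So controlling $\bar\phi_1$ alone does not automatically give the lower bound for $\bar\phi_2$, and your recursion $h(\bar\alpha z)\ge 2h(z)-C_2$ is simply unavailable for $h=-\log\bar\phi_2$.

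The paper avoids this by working not with a single coordinate but with
\[
M(z)=\max_i\bigl|\Expect(e^{z\bar\theta_i})\bigr|,\qquad
m(z)=\min_{i\in\{0,1,3\}}\Expect(e^{z\bar\theta_i}),
\]
so that $M(\bar\alpha z)\le M(z)^2$ (all terms have degree $\ge 2$ and $M<1$) and $m(\bar\alpha z)\ge \tfrac1{10}m(z)^2$ (the components $\bar g_1,\bar g_3,\tilde g_1,\tilde g_2$ all contain a genuine quadratic monomial). Note that $i=2$ is deliberately excluded from the minimum. The case $i=2$ is then recovered in a second step from $\bar g_2(z_1,z_2,z_3)\ge\tfrac49 z_1^2 z_3$ and the bounds already obtained for $i=1,3$. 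You correctly flag an analogous comparability issue as ``the main obstacle'' for the right tail, but you overlook that the same obstacle is already present on the left, and your proposed cure (a priori control of ratios via positive regularity) does not help here because the second component genuinely lives on a different scale.

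Two smaller remarks. For the lower bounds on the tail probabilities the paper does not use a Paley--Zygmund argument; it simply cites the de Bruijn Tauberian correspondence between the Laplace transform bounds and the tail (\cite[Prop.~3.2, Lemma~3.4]{barlow1988brownian}). Your Chernoff derivation of the upper tail bounds and your Fourier-inversion argument for the $C^\infty$ density are both fine and match what the paper cites from \cite{bingham1987regular}.
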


\begin{proof}
Set $\C_-=\{z\in\C \,:\, \Re z\le 0\}$. The random variables $\bar\theta_0,\bar\theta_1,\bar\theta_2,\bar\theta_3\ge0$ have positive densities on $(0,\infty)$. Thus $0<\abs{\Expect(e^{z\bar\theta_i})}<1$ for all $z\in\C_-\setminus\{0\}$ and for all $i\in\{0,1,2,3\}$.

We start with the upper bounds of the left tail. Set $M(z) = \max\{\abs{\Expect(e^{z\bar\theta_i})} \,:\, i\in\{0,1,2,3\} \}$. Then $M(\bar\alpha z) \le M(z)^2$ for all $z\in\C_-$ using the functions $\bar g$ and $\tilde g$. Set $H(z) = -\abs{z}^{-\bar\gamma_\ell} \log M(z)$, so that $H(\bar\alpha z) \ge H(z)$ for all $z\in\C_-$. Due to continuity there is a constant $C_{2,\ell}>0$ such that $H(z)\ge C_{2,\ell}$ for all $z\in\C_-$ with $1\le\abs{z}\le\bar\alpha$. This implies $H(z)\ge C_{2,\ell}$ for all $z\in\C_-$ with $\abs{z}\ge1$ and thus $\abs{\Expect(e^{z\bar\theta_i})} \le e^{-C_{2,\ell} \abs{z}^{\bar\gamma_\ell}}$ for all $z\in\C_-$ with $\abs{z}\ge1$ and $i\in\{0,1,2,3\}$.

For the lower bounds of the left tail set $m(z) = \min\{\Expect(e^{z\bar\theta_i}) \,:\, i\in\{0,1,3\} \}$, so that $m(\bar\alpha z) \ge \tfrac1{10} m(z)^2$ for all $z\le0$. If we set $h(z) = -\abs{z}^{-\bar\gamma_\ell} \log m(z)$, then
\[ h(\bar\alpha z) \le \tfrac12 \abs{z}^{-\bar\gamma_\ell} \log10 + h(z) \]
for all $z\le0$. For $n\ge0$ this implies
\[ h(\bar\alpha^n z) \le \bigl( (\tfrac12)^1 + \dotsb + (\tfrac12)^n \bigr) \abs{z}^{-\bar\gamma_\ell} \log10 + h(z)
	\le \abs{z}^{-\bar\gamma_\ell} \log10 + h(z). \]
As before, there is a constant $C_{1,\ell}>0$ such that $\abs{z}^{-\bar\gamma_\ell} \log10 + h(z)\le C_{1,\ell}$ for all $-\bar\alpha\le z\le-1$. This implies $h(z)\le C_{1,\ell}$ for all $z\le-1$ and so $\Expect(e^{z\bar\theta_i}) \ge e^{-C_{1,\ell} \abs{z}^{\bar\gamma_\ell}}$ for all $z\le0$ and $i\in\{0,1,3\}$. If $i=2$, notice that
\[ \bar g_2(z_1,z_2,z_3) \ge \tfrac49 z_1^2 z_3 \]
for all $z_1,z_2,z_3\ge0$. Hence, using the lower bounds above,
\[ \Expect(e^{\bar\alpha z\bar\theta_2}) \ge \tfrac49 e^{-3C_{1,\ell} \abs{z}^{\bar\gamma_\ell}} \]
for all $z\le-1$. By a suitable modification of $C_{1,\ell}$ we get the lower bound for the case $i=2$.

The proof of the bounds for the right tail is very similar to the proof for the left tail, hence we omit the details.

For the remaining statements, see \cite[Proposition~3.2, Lemma~3.4]{barlow1988brownian} and \cite[Corollary~4.12.8]{bingham1987regular}.
\end{proof}

Analogous to Remark~\ref{remark:func1} it is easy to describe the limit behaviour of any parameter of loop-erased random walk in $G_n$ from $u_1$ to $u_2$ that is a functional of $\mathbold{\bar\type}^\num_n(T_\infty)$. As a simple example we consider the length of loop-erased random walk in $G_n$ from $u_1$ to $u_2$, which is given by the distance $d_{T_n}(u_1,u_2)$, where $d_{T_n}$ is the graph metric of the tree $T_n$. We remark that a similar derivation of the expectations below is given in \cite{dhar1997distribution,hattori2012looperased}.

\begin{corollary}\label{corollary:length}
If $n\ge0$, then the probability generating functions of $d_{T_n^{}}(u_1,u_2)$ and $d_{S_n^3}(u_1,u_2)$ are given by, with $\mathbold{\tilde g}$, $\mathbold{\bar g}$ and $\mathbold{\Sigma}$ as defined in~\eqref{eq:gdef},~\eqref{eq:tildegdef},~\eqref{eq:sigmadef},
\[ \bigl(\PGF(d_{T_n^{}}(u_1,u_2),z),\PGF(d_{S_n^3}(u_1,u_2),z)\bigr)
	= \mathbold{\Sigma}(\mathbold{\bar g}^n(z,z^2,z))
	= \mathbold{\tilde g}^n\bigl( \tfrac23 z + \tfrac13 z^2, z \bigr) \]
and the expectations are
\[ \begin{pmatrix} \Expect(d_{T_n^{}}(u_1,u_2)) \\ \Expect(d_{S_n^3}(u_1,u_2)) \end{pmatrix}
	= \begin{pmatrix}
		\tfrac23 + \tfrac5{123}\sqrt{205}    & \tfrac23 - \tfrac5{123}\sqrt{205} \\[4pt]
		\tfrac12 + \tfrac{19}{410}\sqrt{205} & \tfrac12 - \tfrac{19}{410}\sqrt{205}
	\end{pmatrix} \cdot
	\begin{pmatrix}
		\bigl( \tfrac43 + \tfrac1{15} \sqrt{205}\,\bigr)^n \\[4pt]
		\bigl( \tfrac43 - \tfrac1{15} \sqrt{205}\,\bigr)^n
	\end{pmatrix}. \]
Furthermore,
\[ \bar\alpha^{-n} d_{T_n^{}}(u_1,u_2) \to \tfrac16(\sqrt{205}-7) \bar\theta(T_\infty,3), \qquad
	\bar\alpha^{-n} d_{S_n^3}(u_1,u_2) \to \tfrac16(\sqrt{205}-7) \bar\theta(S_\infty,3) \]
almost surely as $n\to\infty$.
\end{corollary}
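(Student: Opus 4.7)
The plan is to decompose $d_{T_n}(u_1,u_2)$ and $d_{S_n^3}(u_1,u_2)$ additively over the $n$\ndash parts traversed by the respective self-avoiding walks, and to exploit the multi-type Galton-Watson structure of $\mathbold{\bar\type}(T_\infty,3)$ and $\mathbold{\bar\type}(S_\infty^3,3)$ furnished by Proposition~\ref{proposition:tree2}. Since $T_n$ is a tree, $d_{T_n}(u_1,u_2)$ equals the number of edges in $u_1 T_n u_2$, and inspection of $\bar\eta$ in Table~\ref{table:sub2} shows that an $n$\ndash part of type $\bar\type_w$ contributes two edges to the walk exactly when $\bar\type_w\in\{\conn55,\conn66,\conn77\}$ (equivalently, $\bar c(\bar\type_w)=2$) and one edge in every other case. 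Consequently,
\[ d_{U_n}(u_1,u_2) = \bar c^\num_{1,n}(U_\infty,3) + 2\bar c^\num_{2,n}(U_\infty,3) + \bar c^\num_{3,n}(U_\infty,3) \quad\text{for } U_\infty\in\{T_\infty,S_\infty^3\}. \]

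By the branching property of the multi-type Galton-Watson process $(\mathbold{\bar c}^\num_n(U_\infty,3))_{n\ge0}$ with offspring generating function $\mathbold{\bar g}$ (Remark~\ref{remark:count}), one has $\Expect(\mathbold z^{\mathbold{\bar c}^\num_n(U_\infty,3)}) = \PGF(\mathbold{\bar c}^\num_0(U_\infty,3),\mathbold{\bar g}^n(\mathbold z))$. Specialising $\mathbold z=(z,z^2,z)$ turns the left side into $\PGF(d_{U_n}(u_1,u_2),z)$, while the pair of root PGFs appearing on the right is precisely $\mathbold\Sigma$ from~\eqref{eq:sigmadef}. Iterating the commutation $\mathbold\Sigma\circ\mathbold{\bar g}=\mathbold{\tilde g}\circ\mathbold\Sigma$ stated in Remark~\ref{remark:count}, together with the evaluation $\mathbold\Sigma(z,z^2,z)=(\tfrac23 z+\tfrac13 z^2,z)$, yields the claimed closed form $\mathbold{\tilde g}^n(\tfrac23 z+\tfrac13 z^2,z)$. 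Differentiating this at $z=1$ and applying the chain rule produces the expectation vector $\mathbold{\tilde M}^n(4/3,1)^t$, where $\mathbold{\tilde M}=\bigl(\begin{smallmatrix}5/3 & 2/3\\ 6/5 & 1\end{smallmatrix}\bigr)$ is the Jacobian of $\mathbold{\tilde g}$ at $(1,1)$; its eigenvalues are readily found to be $\bar\alpha=\tfrac43+\tfrac1{15}\sqrt{205}$ and $\tfrac43-\tfrac1{15}\sqrt{205}$. Diagonalising $\mathbold{\tilde M}$ and expressing $(4/3,1)^t$ in the eigenbasis gives the explicit $2\times2$ coefficient matrix stated in the corollary.

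The almost sure statement follows from Remark~\ref{remark:count}, which provides $\bar\alpha^{-n}\mathbold{\bar c}^\num_n(U_\infty,3)\to(6a_4,3a_4,3a_5)\bar\theta(U_\infty,3)$ almost surely for $U_\infty\in\{T_\infty,S_\infty^3\}$. Combining this with the decomposition displayed in the first paragraph yields a limit of $(12a_4+3a_5)\bar\theta(U_\infty,3)$, and substituting $a_4=\tfrac1{18}\sqrt{205}-\tfrac{13}{18}$ and $a_5=\tfrac52-\tfrac16\sqrt{205}$ collapses the coefficient to $\tfrac16(\sqrt{205}-7)$, as claimed. The main obstacles in this proof are really just bookkeeping: reading off the edges-per-$n$\ndash part from Table~\ref{table:sub2} so as to justify the substitution $(z,z^2,z)$, and executing the $2\times2$ eigenvalue calculation in a way that produces the precise explicit closed form for the expectation matrix.
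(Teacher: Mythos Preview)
Your proof is correct and follows the same approach as the paper: both rest on the identity $d_{U_n}(u_1,u_2) = \mathbold{\bar c}^\num_n(U_\infty,3)\cdot(1,2,1)^t$ and then invoke the Galton-Watson machinery of Proposition~\ref{proposition:tree2} and Remark~\ref{remark:count}. The paper's own proof is in fact terser than yours---it simply records this identity and the computation $(6a_4,3a_4,3a_5)\cdot(1,2,1)^t=\tfrac16(\sqrt{205}-7)$, leaving the PGF and expectation formulas implicit---so your version spells out details (the substitution $(z,z^2,z)$, the commutation $\mathbold\Sigma\circ\mathbold{\bar g}=\mathbold{\tilde g}\circ\mathbold\Sigma$, and the diagonalisation of $\mathbold{\tilde M}$) that the paper takes for granted.
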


\begin{proof}
By the description using Galton-Watson trees, see Proposition~\ref{proposition:tree2} and Remark~\ref{remark:count}, we infer that
\begin{align*}
d_{T_n^{}}(u_1,u_2)
	&= \mathbold{\bar c}^\num_n(T_\infty^{},3) \cdot (1,2,1)^t, \\
d_{S_n^3}(u_1,u_2)
	&= \mathbold{\bar c}^\num_n(S_\infty^3,3) \cdot (1,2,1)^t.
\end{align*}
This implies the statement, since $(6a_4,3a_4,3a_5)\cdot (1,2,1)^t = \frac16(\sqrt{205}-7)$.
\end{proof}

\section{Convergence of loop-erased random walk}
\label{section:convergence}

Let $C$ be the set of continuous curves $\gamma\colon[0,\infty]\to K$ with $\gamma(0)=u_1$ and $\gamma(\infty)=u_2$ and set $d_C(\gamma,\delta) = \sup\{ \norm{\gamma(t)-\delta(t)}_2 \,:\, t\in[0,\infty] \}$ for $\gamma,\delta\in C$. Then $(C,d_C)$ is a complete separable metric space. For $\gamma\in C$ set
\[ \hit(\gamma) = \inf\{t \,:\, \gamma(s) = u_2 \text{ for all } s\ge t \} \in (0,\infty]. \]
A curve $\gamma\in C$ is called \emph{self-avoiding} if $\gamma(s)\ne\gamma(t)$ for $0\le s<t\le\hit(\gamma)$. Fix some curve $\gamma$ in $C$ and some integer $n\ge 0$. Then there is a unique integer $m\ge1$ and two unique sequences
\[ 0 = t_0 < \dotsb < t_m = \hit(\gamma) \]
and $w_1,\dotsc,w_m\in\Words^n$ with the following properties:
\begin{itemize}
\item The curve $\gamma$ walks along the $n$\ndash parts $\psi_{w_j}(K)$:
	$\gamma([t_{j-1},t_j])\subseteq\psi_{w_j}(K)$ and
	$\gamma([t_{j-1},t_j])\cap\psi_{w_j}(K\setminus VG_0) \ne \emptyset$
	for all $1\le j\le m$.
\item The quantity $t_j$ is the exit time of $\gamma$ from $\psi_{w_j}(K)$:
	$t_j = \inf\{s > t_{j-1} \,:\, \gamma(s)\notin\psi_{w_j}(K)\}$ for all $1\le j\le m-1$.
\end{itemize}
As a consequence, the intersection of $\psi_{w_{j-1}}(K)$ and $\psi_{w_j}(K)$ consists of one point only, which is equal to $\gamma(t_{j-1})\in VG_n$. We write $\Delta_n(\gamma)$ to denote the number $m$ of $n$\ndash parts traversed, $\time_{j,n}(\gamma)$ to denote the time $t_j$, and we set
\[ W_n(\gamma) = (w_1,\dotsc,w_m). \]
Last but not least set $\spend_{j,n}(\gamma)=\time_{j,n}(\gamma)-\time_{j-1,n}(\gamma)$, which is the time spent in the $n$\ndash part $\psi_{w_j}(K)$. It should be stressed, that $\bigl(\time_{j,n}(\gamma)\bigr)_{j=0,\dotsc,\Delta(n)}$ are in general not equal to the consecutive hitting times on the set $VG_n$, as it might happen, that the curve $\gamma$ enters the part $\psi_{w_j}(K)$ at $\psi_{w_j}(u_1)$, visits $\psi_{w_j}(u_3)$ without leaving $\psi_{w_j}(K)$, and leaves at $\psi_{w_j}(u_2)$.

By linear interpolation and constant extension we can associate to any walk $x=(x_0,\dotsc,x_r)$ in $G_n$ a curve $\LI(x)\colon[0,\infty]\to K$ as follows:
\begin{itemize}
\item Linear interpolation: set $\LI(x)(t) = (k+1-t) x_k + (t-k) x_{k+1}$ if $k\in\{0,\dotsc,r-1\}$ and $k\le t< k+1$.
\item Constant extension: set $\LI(x)(t) = x_r$ for $t\ge r$.
\end{itemize}
If $\lambda>0$, write $\LI(x,\lambda)$ for the curve with rescaled time, i.e., $\LI(x,\lambda)(t) = \LI(x)(\lambda t)$. Note that $\LI(x,\lambda)\in C$ if $x_0=u_1$ and $x_r=u_2$.

\begin{remark}\label{remark:conn}
Let $t\in\mathcal T_\infty$ and set $\gamma_n = \LI(u_1(\Tr^\infty_n t)u_2) \in C$ for $n\ge0$. If $m\ge n$, then the number $\Delta_n(\gamma_m)$ of $n$\ndash parts visited by $\gamma_m$ is given by
\[ \Delta_n(\gamma_m) = \mathbold{\bar c}^\num_n(t,3) \cdot (1,1,1)^t, \]
since $\mathbold{\bar c}^\num_n(t,3)$ counts the $n$\ndash parts on the unique path from $u_1$ to $u_2$ by their type. Moreover, the words in $W_n(\gamma_m)$ associated to the $n$\ndash parts visited by $\gamma_m$ and the labels $W_n(t,3)$ of the $n$\ndash th generation of the tree $\mathbold{\bar\chi}(t,3)$ are equal, if the natural ordering of $W_n(t,3)$ is used. Finally, the length of the self-avoiding walk $u_1(\Tr^\infty_n t)u_2$ is given by
\[ \hit(\gamma_n) = d_{\Tr^\infty_n t}(u_1,u_2) = \mathbold{\bar c}^\num_n(t,3) \cdot (1,2,1)^t, \]
since types $\conn55,\conn66,\conn77$ contribute $2$ to the length while all other types contribute $1$. If $m\ge n$ and $0\le j\le\Delta_n(\gamma_n)$, then
\[ \gamma_m(\time_{j,n}(\gamma_m)) = \gamma_n(\time_{j,n}(\gamma_n)) \in VG_n. \]
Let $x_{j,n} = \gamma_n(\time_{j,n}(\gamma_n)) \in VG_n$ and $W_n(\gamma_n) = (w_1,\dotsc,w_{\Delta_n(\gamma_n)})$. It follows that, for any $0\le i<j\le\Delta_n(\gamma_n)$,
\begin{itemize}
\item the vertices $x_{i,n}$ and $x_{j,n}$ are not the same,
\item at $x_{j-1,n}$ the self-avoiding walk $u_1(\Tr^\infty_m t)u_2$ enters the $n$\ndash part $\psi_{w_j}(K)$ and
	at $x_{j,n}$ it leaves this $n$\ndash part,
\item the quantity $\spend_{j,n}(\gamma_m)$ is the length of the self-avoiding walk $u_1(\Tr^\infty_m t)u_2$
	restricted to the segment from $x_{j-1,n}$ to $x_{j,n}$, i.e.,
	it is equal to the number of edges of this walk inside the $n$\ndash part $\psi_{w_j}(K)$:
	\[ \spend_{j,n}(\gamma_m) = d_{\Tr^\infty_m t}(x_{j-1,n},x_{j,n})
		= \mathbold{\bar c}^\num_{m-n}(\pi_{w_j}(t),\bar\kappa_{w_j}(t,3)) \cdot (1,2,1)^t. \]
\end{itemize}
\end{remark}

The results of Section~\ref{section:lerw} indicate that $\LI(u_1T_nu_2,\bar\alpha^n)$ converges almost surely for $n\to\infty$. The proof of this fact closely follows the arguments of \cite{barlow1988brownian,hattori1991selfavoiding,kumagai1993construction}. In the first two references uni-type Galton-Watson processes are used, whereas in the last reference a Galton-Watson process with four types is used.

A pair $(W,(b_w)_{w\in W})$ with $W\subseteq\Words^n$ and $b_w\in\mathcal{\bar C}$ is called \emph{admissible} of length $n$ if there is an element $t\in\mathcal T_\infty$ such that $W=W_n(t,3)$ and $b_w=\bar\type_w(t)$ for $w\in W$. Notice that $W$ inherits the natural ordering from $W_n(t,3)$. An admissible pair $(W,(b_w)_{w\in W})$ completely describes the self-avoiding walk connecting $u_1$ and $u_2$ in the spanning tree $\Tr^\infty_n t$ for some $t\in\mathcal T_\infty$. Loosely speaking, the following lemma states that conditioning on the $n$\ndash th level, i.e. conditioning on $W_n(T_\infty,3) = W$ and $(\bar\type_w(T_\infty,3))_{w\in W} = (b_w)_{w\in W}$ for some admissible pair $(W,(b_w)_{w\in W})$, the refinements in different $n$\ndash parts are conditionally independent and for each $n$\ndash part the refinement yields again a multi-type Galton-Watson tree.

\begin{lemma}\label{lemma:indep}
Let $(W,(b_w)_{w\in W})$ be an admissible pair of length $n$. Then, under $\Prob(\,\cdot \mid W_n(T_\infty,3) = W, (\bar\type_w(T_\infty,3))_{w\in W} = (b_w)_{w\in W})$, the following holds:
\begin{itemize}
\item The random trees $\mathbold{\bar\type}(\pi_w(T_\infty),\bar\kappa_w(T_\infty,3))$
	for $w\in W$ are independent labelled multi-type Galton-Watson trees
	with labels in $\Words^*$ and types in $\mathcal{\bar C}$
	as described in Proposition~\ref{proposition:tree2}.
\item For $w\in W$,
	$\bar\alpha^{-n}\mathbold{\bar\type}^\num_n(\pi_w(T_\infty),\bar\kappa_w(T_\infty,3))$
	converges almost surely to $\mathbold{\bar v}_L \bar\theta(w)$
	for some non-negative random variable $\bar\theta(w)$,
	which has the same distribution as $\bar\theta_{\bar c(b_w)}$.
	In particular, $\bar\theta(w)$ is almost surely positive.
	The random variables $\bar\theta(w)$ for $w\in W$ are independent.
\item We have $\Delta_n(\LI(u_1T_nu_2)) = \card{W}$ almost surely.
	Let $(w_1,w_2,\dotsc)$ be the natural ordering of $W$ and let $m\ge n$.
	Then the random variables
	\[ \spend_{j,n}(\LI(u_1T_mu_2,\bar\alpha^m))
		= \bar\alpha^{-m}\mathbold{\bar c}^\num_{m-n}(\pi_{w_j}(T_\infty),\bar\kappa_{w_j}(T_\infty,3))
			\cdot (1,2,1)^t \]
	for $1\le j\le\card{W}$ are independent and
	\[ \spend_{j,n}(\LI(u_1T_mu_2,\bar\alpha^m)) \to \tfrac16(\sqrt{205}-7)\bar\alpha^{-n}\bar\theta(w_j) \]
	almost surely as $m\to\infty$.
\end{itemize}
\end{lemma}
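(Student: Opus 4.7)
The plan is to exploit the branching property of the labelled multi-type Galton-Watson tree $\mathbold{\bar\type}(T_\infty,3)$ established in Proposition~\ref{proposition:tree2}(1), together with the asymptotic results of Proposition~\ref{proposition:tree2}(3) and Remark~\ref{remark:count}. The only genuinely subtle point is part one: one has to check that the coarser conditioning on the admissible pair $(W,(b_w)_{w\in W})$ gives the same conditional independence structure as the full level-$n$ sigma-algebra of $\mathbold{\bar\type}(T_\infty,3)$.

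For part one, first observe that $W_n(T_\infty,3)$ is precisely the set of individuals at the $n$\ndash th generation of the Galton-Watson tree $\mathbold{\bar\type}(T_\infty,3)$ and that the subtree rooted at $w\in W_n(T_\infty,3)$ is canonically identified with $\mathbold{\bar\type}(\pi_w(T_\infty),\bar\kappa_w(T_\infty,3))$; this is immediate from the definitions of $\pi_w$ and $\bar\kappa_w$. By the standard branching property, conditional on the level-$n$ sigma-algebra $\mathcal F_n$, the subtrees rooted at the level-$n$ individuals are independent labelled multi-type Galton-Watson trees whose distributions depend only on their root types. Since the conditioning event $\{W_n(T_\infty,3)=W,\,(\bar\type_w(T_\infty,3))_{w\in W}=(b_w)_{w\in W}\}$ is a coarsening of $\mathcal F_n$ and the conditional law given $\mathcal F_n$ already depends only on $(b_w)_{w\in W}$, the same factorisation persists. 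For part two, Theorems~1.8.2 and 1.9.1 in \cite{mode1971multitype}, as already invoked in Proposition~\ref{proposition:tree2}(3), apply to each Galton-Watson tree started from the deterministic root type $b_w$: this yields a positive random variable $\bar\theta(w)$ with $\bar\alpha^{-n}\mathbold{\bar\type}^\num_n(\pi_w(T_\infty),\bar\kappa_w(T_\infty,3))\to\mathbold{\bar v}_L\bar\theta(w)$ almost surely. The symmetry of the offspring distribution $\mathbold{\bar f}$ under permutations of types within each $\bar c$\ndash class identifies the law of $\bar\theta(w)$ with that of $\bar\theta_{\bar c(b_w)}$, and independence across $w\in W$ is inherited from part one.

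For part three, Remark~\ref{remark:conn} supplies both the identity $\Delta_n(\LI(u_1T_nu_2))=\card{W}$ and the representation
\[ \spend_{j,n}(\LI(u_1T_mu_2,\bar\alpha^m)) = \bar\alpha^{-m}\,\mathbold{\bar c}^\num_{m-n}(\pi_{w_j}(T_\infty),\bar\kappa_{w_j}(T_\infty,3))\cdot(1,2,1)^t. \]
Since $\mathbold{\bar c}^\num_{m-n}$ is a linear image of $\mathbold{\bar\type}^\num_{m-n}$, independence of the subtrees established in part one immediately yields independence of $\spend_{1,n},\ldots,\spend_{\card{W},n}$ for any fixed $m\ge n$. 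Passing to the limit $m\to\infty$, Remark~\ref{remark:count} gives $\bar\alpha^{-(m-n)}\mathbold{\bar c}^\num_{m-n}(\pi_{w_j}(T_\infty),\bar\kappa_{w_j}(T_\infty,3))\to(6a_4,3a_4,3a_5)\bar\theta(w_j)$ almost surely, and multiplying by $\bar\alpha^{-n}$ and pairing with $(1,2,1)^t$ produces the desired limit once one verifies the numerical identity $(6a_4,3a_4,3a_5)\cdot(1,2,1)^t=12a_4+3a_5=\tfrac16(\sqrt{205}-7)$ using the given values of $a_4$ and $a_5$.
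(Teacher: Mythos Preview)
Your proof is correct and follows essentially the same approach as the paper, which simply states that the first two parts are consequences of Proposition~\ref{proposition:tree2} and that the third follows from the first two together with Remark~\ref{remark:conn}. You have merely made explicit the details the paper leaves implicit --- in particular the passage from the full level-$n$ conditioning to the coarser conditioning on the admissible pair, and the numerical check that $12a_4+3a_5=\tfrac16(\sqrt{205}-7)$.
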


\begin{proof}
The first two parts are consequences of Proposition~\ref{proposition:tree2}. The third part follows from the first and the second and from Remark~\ref{remark:conn}.
\end{proof}

In the following we set $X_n = \LI(u_1T_nu_2,\bar\alpha^n)$, so that $X_n\colon\Omega\to C$ is a random element in $C$ and
\[ X_n(\time_{j,n}(X_n)) = X_m(\time_{j,n}(X_m)) \]
for all $m\ge n$. Define
\[ \Omega' = \Bigl\{ \omega\in\Omega \,:\, \lim_{m\to\infty} \spend_{j,n}(X_m) \in (0,\infty)
			\text{ for } n\ge0, 1\le j\le\Delta_n(X_n) \Bigr\}. \]
Then using Lemma~\ref{lemma:indep} we conclude that $\Prob(\Omega')=1$. Fix some $\omega\in\Omega'$. For $n\ge0$ and $1\le j\le\Delta_n(X_n)$ set
\[ \lspend_{j,n} = \lim_{m\to\infty} \spend_{j,n}(X_m). \]
It follows that
\[ \lim_{m\to\infty} \time_{j,n}(X_m)
	= \lim_{m\to\infty} \sum_{1\le k\le j} \spend_{k,n}(X_m)
	= \sum_{1\le k\le j} \lspend_{k,n} \in (0,\infty). \]
We write $\ltime_{j,n}$ to denote this limit. Lastly, note that
\[ \hit(X_m) = \time_{1,0}(X_m) = \time_{\Delta_n(X_n),n}(X_m) \qquad\text{and thus}\qquad
	\ltime_{1,0} = \ltime_{\Delta_n(X_n),n}. \]

\begin{theorem}\label{theorem:conv}
On $\Omega'$ the curve $X_n$ converges uniformly as $n\to\infty$ to a limit curve $X$ in $C$, which satisfies the following properties:
\begin{itemize}
\item $X(\ltime_{j,n}) = X_n(\time_{j,n}(X_n)) \in VG_n$ for all $n\ge0$ and $0\le j\le\Delta_n(X_n)$.
\item If $W_n(X_n) = \{w_1,\dotsc,w_{\Delta_n(X_n)}\}$, then
	\begin{gather*}
		X(\ltime_{i,n})\ne X(\ltime_{j,n}), \qquad
		X(\ltime_{j-1,n}), X(\ltime_{j,n})\in\psi_{w_j}(VG_0), \\
		X([\ltime_{j-1,n},\ltime_{j,n}])\subseteq\psi_{w_j}(K), \qquad
		X([\ltime_{j-1,n},\ltime_{j,n}])\cap\psi_{w_j}(K\setminus VG_0)\ne\emptyset
	\end{gather*}
	for all $0\le i<j\le\Delta_n(X_n)$.
	Hence $\Delta_n(X) = \Delta_n(X_n)$ and $W_n(X) = W_n(X_n)$ for all $n\ge0$.
\end{itemize}
\end{theorem}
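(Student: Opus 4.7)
The plan is to show that $(X_m)_{m\ge 0}$ is Cauchy in the complete metric space $(C,d_C)$, define $X$ as its uniform limit, and then verify each stated property directly. Fix $\omega\in\Omega'$. For $m\ge n$ the ordered list $W_n(X_n)=(w_1,\dotsc,w_{\Delta_n(X_n)})$ of $n$\ndash parts is preserved (since $\Tr^m_n T_m=T_n$), the entry/exit vertices $x_{j,n}:=X_n(\time_{j,n}(X_n))\in VG_n$ are $m$\ndash independent, and only the exit times $\tau_{j,m}:=\time_{j,n}(X_m)$ vary, satisfying $\tau_{j,m}\to\ltime_{j,n}$ with strictly positive limiting gaps $\lspend_{j,n}$. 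Choose $M(n)$ so that $|\tau_{j,m}-\ltime_{j,n}|<\tfrac12\min_{1\le k\le\Delta_n(X_n)}\lspend_{k,n}$ for all $m\ge M(n)$ and all $j$. Given $m,m'\ge M(n)$ and any $t$, let $j,j'$ be the indices with $t\in[\tau_{j-1,m},\tau_{j,m}]$ and $t\in[\tau_{j'-1,m'},\tau_{j',m'}]$; the choice of $M(n)$ forces $|j-j'|\le 1$ (otherwise two small deviations of total length $<\min_k\lspend_{k,n}$ would have to bridge an interval of length at least $\min_k\lspend_{k,n}$). The two $n$\ndash parts $\psi_{w_j}(K)$ and $\psi_{w_{j'}}(K)$ are therefore either equal or share a common $VG_n$\ndash vertex, whence
\[ \|X_m(t)-X_{m'}(t)\|_2 \le \operatorname{diam}\psi_{w_j}(K)+\operatorname{diam}\psi_{w_{j'}}(K) \le 2\cdot 2^{-n}. \]
This yields $d_C(X_m,X_{m'})\le 2^{1-n}$, so $(X_m)$ is Cauchy and converges uniformly to a limit $X\in C$.

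To pin down $X$ at the special times, note that at $t=\ltime_{j,n}$ the value $X_m(t)$ lies in one of the two $n$\ndash parts $\psi_{w_j}(K)$ or $\psi_{w_{j+1}}(K)$ sharing the corner $x_{j,n}$, so $\|X_m(\ltime_{j,n})-x_{j,n}\|_2\le 2^{-n}$; pointwise convergence $X_m(t)\to X(t)$ (a consequence of uniform convergence) then gives $\|X(\ltime_{j,n})-x_{j,n}\|_2\le 2^{-n}$. The bound is sharpened by a nested argument: $x_{j,n}\in VG_n\subseteq VG_{n'}$ for every $n'\ge n$, and it reappears as some $x_{j',n'}$ with $\ltime_{j,n}=\ltime_{j',n'}$ (both are the exit time of $X$ from the unique $n'$\ndash part inside $\psi_{w_j}$ through which the path leaves). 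The estimate at level $n'$ gives $\|X(\ltime_{j,n})-x_{j,n}\|_2\le 2^{-n'}$ for every $n'\ge n$, hence exact equality.

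The remaining properties then follow almost formally. For $t\in(\ltime_{j-1,n},\ltime_{j,n})$ and $m$ sufficiently large one has $X_m(t)\in\psi_{w_j}(K)$, so closedness of $\psi_{w_j}(K)$ combined with continuity of $X$ yields $X([\ltime_{j-1,n},\ltime_{j,n}])\subseteq\psi_{w_j}(K)$. To hit the relative interior $\psi_{w_j}(K\setminus VG_0)$, pick any $m>n$ and an index $j'$ whose $m$\ndash part $w_{j'}$ lies strictly inside $w_j$; such $j'$ exists because $\lspend_{j,n}>0$ forces positive time spent in $\psi_{w_j}(K)$, and then $X(\ltime_{j',m})=x_{j',m}\in\psi_{w_j}(VG_m\setminus VG_0)$. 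Distinctness of the $X(\ltime_{i,n})=x_{i,n}$ reduces to self-avoidance of the unique tree-path $u_1T_nu_2$, and the equalities $\Delta_n(X)=\Delta_n(X_n)$, $W_n(X)=W_n(X_n)$ are then immediate readings of the defining properties of $\Delta_n$ and $W_n$ for the limit curve $X$.

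The main obstacle is the interplay between the spatial scale $n$, which controls the Euclidean bound $2^{1-n}$, and the temporal scale $m$, which must be large enough for the intervals $[\tau_{j-1,m},\tau_{j,m}]$ to retain the ordering of the limit intervals $[\ltime_{j-1,n},\ltime_{j,n}]$. Both requirements are met simultaneously precisely because $\omega\in\Omega'$ guarantees strictly positive limiting gaps $\lspend_{k,n}$; without that hypothesis the adjacency bound $|j-j'|\le 1$ could fail. A secondary subtlety is the iteration across infinitely many deeper levels $n'\ge n$ needed to upgrade the approximate identification of $X(\ltime_{j,n})$ to the exact equality $X(\ltime_{j,n})=x_{j,n}$, which exploits the nested structure $VG_n\subseteq VG_{n'}$.
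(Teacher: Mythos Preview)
Your proof is correct and follows essentially the same strategy as the paper's: establish that $(X_m)$ is Cauchy in $(C,d_C)$ by using the positivity of the limiting gaps $\lspend_{k,n}$ to control which $n$\ndash part $X_m(t)$ can lie in, then read off the stated properties from the construction. The Cauchy argument in the paper is organized slightly differently (it compares both $X_{m_1}(t)$ and $X_{m_2}(t)$ to the common reference vertex $x_{j-1,n}$ rather than bounding the index discrepancy $|j-j'|\le 1$ directly), but the content is the same.

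The one place where the paper is noticeably simpler is the first bullet, $X(\ltime_{j,n})=x_{j,n}$. Instead of your nested refinement across levels $n'\ge n$, the paper just observes that $X_m(\time_{j,n}(X_m))=x_{j,n}$ for every $m\ge n$, that $\time_{j,n}(X_m)\to\ltime_{j,n}$, and that $X_m\to X$ uniformly with $X$ continuous, whence $x_{j,n}=\lim_m X_m(\time_{j,n}(X_m))=X(\ltime_{j,n})$ in one line. Your nested argument works, but it relies on the identification $\ltime_{j,n}=\ltime_{j',n'}$, which you justify by speaking of ``the exit time of $X$'' before that interpretation has been established; the clean way to phrase it is that $\time_{j,n}(X_m)=\time_{j',n'}(X_m)$ for all $m\ge n'$ (both are the time $X_m$ passes through the vertex $x_{j,n}$), and then take limits. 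Either way the conclusion is the same, but the paper's route avoids this extra bookkeeping entirely.
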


\begin{proof}
We closely follow the arguments of \cite{hattori1991selfavoiding}. Fix $\omega\in\Omega'$. We will show that $X_n$ converges uniformly in $[0,\infty]$.

Let $n\ge1$ be a non-negative integer. Then $\Delta_n(X_n)\ge2$. By Definition of $\Omega'$ we have
\[ a = \min\{\lspend_{j,n} \,:\, 1\le j\le\Delta_n(X_n)\} > 0. \]
Hence there is a positive integer $M=M(n,\omega)$ with $M\ge n$ such that
\[ \max\{\abs{\time_{j,n}(X_m) - \ltime_{j,n}} \,:\, 0\le j\le\Delta_n(X_n)\} \le a \]
for all $m\ge M$. For convenience set $\ltime_{\Delta_n(X_n)+1,n} = \time_{\Delta_n(X_n)+1,n}(X_m) = \infty$ for all $0\le n\le m$. Now consider $t\in[0,\infty]$. There is an integer $j$ with $1\le j\le\Delta_n(X_n)+1$ such that $\ltime_{j-1,n}\le t\le\ltime_{j,n}$. Let $m\ge M$ and distinguish the following cases:
\begin{itemize}
\item $1<j<\Delta_n(X_n)$: We infer that
	\begin{gather*}
	\time_{j-2,n}(X_m) \le \ltime_{j-2,n} + a \le \ltime_{j-2,n} + \lspend_{j-1,n} = \ltime_{j-1,n} \le t, \\
	t \le \ltime_{j,n} = \ltime_{j+1,n} - \lspend_{j+1,n} \le \ltime_{j+1,n} - a \le \time_{j+1,n}(X_m).
	\end{gather*}
	Since $X_m([\time_{j-2,n}(X_m),\time_{j+1,n}(X_m)]) \subseteq \psi_{w_1}(K)\cup\psi_{w_2}(K)\cup\psi_{w_3}(K)$
	for some $w_1,w_2,w_3\in\Words^n$ with $\psi_{w_1}(K)\cap\psi_{w_2}(K) = \{X_m(\time_{j-1,n}(X_m))\}$ and
	$\psi_{w_2}(K)\cap\psi_{w_3}(K) = \{X_m(\time_{j,n}(X_m))\}$, we obtain
	\[ \norm{ X_m(t) - X_m(\time_{j-1,n}(X_m)) }_2 \le 2^{1-n}. \]
\item $j=1$: It follows as before that $0\le t \le \time_{2,n}(X_m)$ for all $m\ge M$. Hence
	\[ \norm{ X_m(t) - X_m(\time_{0,n}(X_m)) }_2 \le 2^{1-n}. \]
\item $j=\Delta_n(X_n)$: Again, $\time_{\Delta_n(X_n)-2,n}(X_m) \le t \le \time_{\Delta_n(X_n)+1,n}$ and therefore
	\[ \norm{ X_m(t) - X_m(\time_{\Delta_n(X_n)-1,n}(X_m)) }_2 \le 2^{-n}. \]
\item $j=\Delta_n(X_n)+1$: Then $\time_{\Delta_n(X_n)-1,n}(X_m) \le t \le \time_{\Delta_n(X_n)+1,n}$ and
	\[ \norm{ X_m(t) - X_m(\time_{\Delta_n(X_n),n}(X_m)) }_2 \le 2^{-n}. \]
\end{itemize}
In any case we have
\[ \norm{ X_m(t) - X_m(\time_{j-1,n}(X_m)) }_2 \le 2^{1-n} \]
for $m\ge M$. Now let $m_1,m_2\ge M$. Since $X_{m_1}(\time_{j-1,n}(X_{m_1})) = X_{m_2}(\time_{j-1,n}(X_{m_2}))$, the estimate above implies
\begin{multline*}
\norm{ X_{m_1}(t) - X_{m_2}(t) }_2 \\
\le \norm{ X_{m_1}(t) - X_{m_1}(\time_{j-1,n}(X_{m_1})) }_2
	+ \norm{ X_{m_2}(t) - X_{m_2}(\time_{j-1,n}(X_{m_2})) }_2
\le 2^{2-n}.
\end{multline*}
As $X_m(0)=u_1$ and $X_m(\infty)=u_2$, we have proved that $X_n$ converges uniformly to a limit curve $X$ in $C$.

The first property listed in Theorem~\ref{theorem:conv} follows from the fact that $X_m(\time_{j,n}(X_m)) = X_n(\time_{j,n}(X_n))$ for all $m\ge n$ and $\time_{j,n}(X_m)\to\ltime_{j,n}$, $X_m\to X$ uniformly as $m\to\infty$.

In order to show the second property let $t$ be in $(\ltime_{j-1,n},\ltime_{j,n})$. Then, for sufficiently large $m$, $t\in(\time_{j-1,n}(X_m),\time_{j,n}(X_m))$. Due to Remark~\ref{remark:conn} we have $X_m(t)\in\psi_{w_j}(K)$ for all sufficiently large $m$. As $X_m(t)\to X(t)$ it follows that $X(t)\in\psi_{w_j}(K)$. Thus Remark~\ref{remark:conn} and the first property imply the second.
\end{proof}

Let $\gamma$ be a curve in $C$, $w$ be a word in $\Words^*$, and $\iota$ be a letter in $\Words$. We say that $\gamma$ has a \emph{peak} of type $\iota$ in the $n$\ndash part $\psi_w(K)$ if there are $t_1<t_2$ such that
\begin{itemize}
\item $\gamma([t_1,t_2])\subseteq\psi_w(K)$,
\item $\gamma(t_1)\ne\gamma(t_2)$ and $\gamma(t_1),\gamma(t_2)\in\psi_w(VG_0\setminus\{u_\iota\})$,
\item $\gamma((t_1,t_2))\cap\psi_w(VG_0)=\{\psi_w(u_\iota)\}$.
\end{itemize}
Intuitively speaking, this means that the curve passes through one of the corners of the $n$\ndash part $\psi_w(K)$ without moving on to the adjacent part.

\begin{lemma}\label{lemma:peaks}
Almost surely, the limit curve $X$ has no peaks. In particular,
\[ X([0,\infty])\cap VG_n = \{ u_1 = X(\ltime_{0,n}), X(\ltime_{1,n}), \dotsc, X(\ltime_{\Delta_n(X_n),n}) = u_2 \} \]
almost surely for all $n\ge0$. If $i,j\in\{1,\dotsc,\Delta_n(X_n)\}$ with $i<j-1$, then
\[ X([\ltime_{i-1,n},\ltime_{i,n}]) \cap X([\ltime_{j-1,n},\ltime_{j,n}]) = \emptyset \]
almost surely. Finally, $\card{X([0,\infty])\cap\psi_w(VG_0)}\le2$ for all $w\in\Words^*$ almost surely.
\end{lemma}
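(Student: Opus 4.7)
The plan is to translate the geometric event ``$X$ has a peak in some $\psi_w(K)$'' into a tail event on the labelled multi-type Galton-Watson tree $\mathbold{\bar\type}(T_\infty,3)$ of Proposition~\ref{proposition:tree2} and to show that the latter has probability zero. Call the types $\conn55,\conn66,\conn77$ of $\mathcal{\bar C}$ \emph{peak types}: by Table~\ref{table:sub2} their $\bar\eta$-images are exactly the two-edge arcs through the third corner (the one indexed by $\bar\kappa$). The key reduction I would first prove is that $X$ has a peak of type $\iota$ in $\psi_w(K)$ if and only if $\bar\type_{w\iota^k}(T_\infty,3)$ is the peak type with local peak at $u_\iota$ for every $k\ge 0$. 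For the forward direction, suppose $X(t^\ast)=\psi_w(u_\iota)$ with $t^\ast$ strictly inside the $\psi_w$-traversal interval at level $|w|$. For every $m\ge|w|$, Theorem~\ref{theorem:conv} places $t^\ast$ inside some interval $(\ltime_{j_m-1,m},\ltime_{j_m,m})$ with $X([\ltime_{j_m-1,m},\ltime_{j_m,m}])\subseteq\psi_{w_{j_m}}(K)$; since $\psi_w(u_\iota)$ must be a common corner of every nested $m$-part, the self-similar structure forces $w_{j_m}=w\iota^{m-|w|}$, and the fact that $\psi_w(u_\iota)$ is never an entry or exit pins $\bar\type_{w_{j_m}}$ to be the peak type with $\bar\kappa=\iota$. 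The converse uses Lemma~\ref{lemma:indep}(3): along such a type chain the spend times $\lspend_{j_m,m}$ decay like $\bar\alpha^{-m}$, so the nested intervals collapse to a single time at which $X$ equals the unique common corner $\psi_w(u_\iota)$.

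From the offspring of $\conn77$ in Table~\ref{table:conngen} I read $\Prob(\bar\type_{w3}=\conn77\mid\bar\type_w=\conn77)=6/18=1/3$, and by the threefold rotational symmetry of the whole construction the analogous conditional probability for $(\conn55,1)$ and $(\conn66,2)$ is also $1/3$. Iterating along the chain $w,w\iota,w\iota^2,\dots$ via the Markov branching property of the labelled Galton-Watson tree gives $\Prob(\bar\type_{w\iota^k}\text{ is the peak type for all }0\le k\le K)\le(1/3)^K\to 0$, so each peak event has probability zero; countable additivity over the countable set $(w,\iota)\in\Words^*\times\Words$ then yields that $X$ has no peaks almost surely.

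The three ``in particular'' assertions are quick corollaries. Any $v\in X([0,\infty])\cap VG_n$ lies in some $X([\ltime_{j-1,n},\ltime_{j,n}])\subseteq\psi_{w_j}(K)$ by Theorem~\ref{theorem:conv}, hence in $\psi_{w_j}(VG_0)$; if $v$ is not already one of the transition points $X(\ltime_{j',n})$, then $v=\psi_{w_j}(u_{\bar\kappa_{w_j}})$ is visited interior to the $\psi_{w_j}$-traversal, i.e.\ a peak. An intersection of the segments $X([\ltime_{i-1,n},\ltime_{i,n}])$ and $X([\ltime_{j-1,n},\ltime_{j,n}])$ with $i<j-1$ lies in $\psi_{w_i}(K)\cap\psi_{w_j}(K)\subseteq VG_n$, and any such common vertex would be interior to one of the two segments, again a peak. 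Finally, a third visited corner of $\psi_w$ would either be a peak (if $\psi_w$ is traversed) or produce an intersection of two non-adjacent segments at $\psi_w(VG_0)$, so $\card{X([0,\infty])\cap\psi_w(VG_0)}\le 2$. The main obstacle throughout is the equivalence in the first paragraph: the forward direction needs the nested-interval data of Theorem~\ref{theorem:conv} to isolate $t^\ast$ uniquely, while the converse rests on the almost-sure positivity of the limit spend times from Lemma~\ref{lemma:indep}; once this reduction is in place, the estimate $p=1/3<1$ and the corollaries proceed mechanically.
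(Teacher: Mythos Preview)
Your proposal is correct and follows essentially the same route as the paper: identify a peak of type $\iota$ in $\psi_w(K)$ with the tail event $\{\bar\type_{w\iota^k}(T_\infty,3)=x(\iota)\text{ for all }k\ge0\}$, use the offspring table to get the conditional probability $1/3$ along this chain, take a countable union over $(w,\iota)\in\Words^*\times\Words$, and then read off the three corollaries from the absence of peaks together with Theorem~\ref{theorem:conv}. The only minor difference is emphasis: you spell out both directions of the equivalence (the paper simply asserts the equality of events), and for the bound $\card{X([0,\infty])\cap\psi_w(VG_0)}\le2$ the paper argues more directly that a third visited corner forces a peak in $\psi_w(K)$ or in one of the three neighbouring $n$-parts $\psi_{w_i}(K)$, which is slightly cleaner than routing through the non-adjacent-segment assertion.
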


\begin{proof}
If $\iota\in\Words$ and $n\ge0$, we write $\iota^n\in\Words^n$ for the $n$\ndash fold repetition of the letter $\iota$ and define $x(\iota)\in\mathcal{\bar C}$ by
\[ x(\iota) = \begin{cases}
		\conn55 & \text{if } \iota=1, \\
		\conn66 & \text{if } \iota=2, \\
		\conn77 & \text{if } \iota=3. \\
	\end{cases} \]
Let $w$ be a word in $\Words^n$ for some $n\ge0$. For $m\ge n$ write $A_m=A_m(w,\iota)$ to denote the event
\[ A_m = \{ w\iota^{m-n}\in W_m(T_\infty,3), \bar\type_{w\iota^{m-n}}(T_\infty,3)=x(\iota) \}. \]
Then, for any $m\ge n$, $A_m\supseteq A_{m+1}$ and
\[ \Prob(A_{m+1} \mid A_m) = \tfrac6{18} = \tfrac13 \]
as one can see easily by inspection of Table~\ref{table:conngen}. Hence
\[ \Prob(A_m) = \bigl(\tfrac13\bigr)^{m-n} \Prob(A_n). \]
Since
\[ \{ X \text{ has a peak of type } \iota \text{ in } \psi_w(K)\} = \bigcap_{m\ge n} A_m, \]
we infer that
\[ \Prob(X \text{ has a peak of type } \iota \text{ in } \psi_w(K)) = 0. \]
This yields
\[ \Prob(X \text{ has a peak})
	\le \sum_{w\in\Words^*}\sum_{\iota\in\Words} \Prob(X \text{ has a peak of type } \iota \text{ in } \psi_w(K))
	= 0. \]
In order to show the last assertion of the lemma, let $w$ be a word in $\Words^n$. For $i\in\{1,2,3\}$ let $w_i$ be the word in $\Words^n$ (if it exists) for which $w_i\ne w$ and $\psi_w(K)\cap\psi_{w_i}(K)=\{\psi_w(u_i)\}$. If $\card{X([0,\infty])\cap\psi_w(VG_0)}=3$ for some $w\in\Words^*$, then $X$ has a peak in one of the parts $\psi_w(K)$, $\psi_{w_1}(K)$, $\psi_{w_2}(K)$, $\psi_{w_3}(K)$. Thus
\[ \Prob(\card{X([0,\infty])\cap\psi_w(VG_0)}=3 \text{ for some } w\in\Words^*)
	\leq \Prob(X \text{ has a peak}) = 0. \]
Consider two indices $i,j\in\{1,\dotsc,\Delta_n(X_n)\}$ with $i<j-1$. Then $X([\ltime_{i-1,n},\ltime_{i,n}])$ and $X([\ltime_{j-1,n},\ltime_{j,n}])$ are contained in distinct $n$\ndash parts of $K$ and, furthermore,
\[ \{X(\ltime_{i-1,n}),X(\ltime_{i,n})\} \cap \{X(\ltime_{j-1,n}),X(\ltime_{j,n})\} = \emptyset. \]
Hence peaks in both $n$\ndash parts are the only possibility for a non-empty intersection. However, this has probability $0$.
\end{proof}

On $\Omega'$ define $\lspend_{*,n}$ for $n\ge0$ by
\[ \lspend_{*,n} = \max\{ \lspend_{j,n} \,:\, 1\le j\le\Delta_n(X_n) \}. \]
Then $\lspend_{*,0} = \lspend_{1,0} = \hit(X)$ and $\lspend_{*,n+1}\le\lspend_{*,n}$ for all $n\ge0$. Therefore the limit $\lim_{n\to\infty}\lspend_{*,n}$ exists and is finite on $\Omega'$.

\begin{lemma}\label{lemma:times}
$\lspend_{*,n} \to 0$ almost surely as $n\to\infty$.
\end{lemma}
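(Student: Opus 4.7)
The plan is to combine part~(3) of Lemma~\ref{lemma:indep} with the right-tail estimate of Lemma~\ref{lemma:mg-bounds} and a simple Borel--Cantelli argument. First, on the full-measure set $\Omega'$,
\[ \lspend_{*,n} = \tfrac{\sqrt{205}-7}{6}\,\bar\alpha^{-n}\max_{1\le j\le\Delta_n(X_n)}\bar\theta(w_j), \]
where, conditional on $(W_n(T_\infty,3),(\bar\type_w(T_\infty,3))_{w\in W_n(T_\infty,3)})$, the variables $\bar\theta(w_j)$ are independent, and each is distributed like one of $\bar\theta_0,\bar\theta_1,\bar\theta_2,\bar\theta_3$. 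In this form the claim reduces to showing that $\max_j\bar\theta(w_j)=o(\bar\alpha^n)$ almost surely.

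Next, fix $\epsilon>0$ and set $s_n=\tfrac{6\epsilon}{\sqrt{205}-7}\bar\alpha^n$. Since $W_n(X_n)\subseteq\Words^n$ we have the deterministic bound $\Delta_n(X_n)\le 3^n$, and Lemma~\ref{lemma:mg-bounds} supplies a uniform right-tail estimate $\Prob(\bar\theta_i\ge s)\le C_{5,r}\exp(-C_{6,r}s^{\bar\gamma})$ with exponent $\bar\gamma=\bar\gamma_r/(1-\bar\gamma_r)>0$, valid for all $s\ge0$ and all $i\in\{0,1,2,3\}$. Conditioning on the $n$\ndash th level information above and summing over the at most $3^n$ candidate words then gives
\[ \Prob(\lspend_{*,n}>\epsilon) \le 3^n\cdot C_{5,r}\exp\bigl(-C_{6,r}s_n^{\bar\gamma}\bigr), \]
and since $s_n^{\bar\gamma}$ is of order $\bar\alpha^{n\bar\gamma}$ with $\bar\gamma>0$, the right-hand side decays faster than any exponential in $n$ and is in particular summable.

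Borel--Cantelli therefore yields $\limsup_n\lspend_{*,n}\le\epsilon$ almost surely, and since $\epsilon>0$ was arbitrary, $\lspend_{*,n}\to 0$ almost surely as $n\to\infty$.

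The argument does not have a genuine obstacle: all the hard work is already packaged into the super-polynomial right tail established in Lemma~\ref{lemma:mg-bounds}, which dominates the $3^n$ terms comfortably. The only piece of bookkeeping is that the variables $\bar\theta(w_j)$ are merely conditionally independent and their common marginal depends on the type $\bar\type_{w_j}(T_\infty,3)$; however the tail bound of Lemma~\ref{lemma:mg-bounds} is uniform over the four possible marginals, so the union bound goes through without change.
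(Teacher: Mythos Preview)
Your proof is correct and follows essentially the same approach as the paper. Both arguments condition on the $n$\ndash th level, use that each $\lspend_{j,n}$ is distributed as $c\bar\alpha^{-n}\bar\theta_i$, apply a union bound, and invoke the tail estimates from Lemma~\ref{lemma:mg-bounds}. The only differences are cosmetic: the paper bounds the number of terms by $\Expect(\Delta_n(X_n))\le 3\bar\alpha^n$ rather than your crude $3^n$, uses Markov's inequality with the moment generating function directly rather than the packaged tail bound, and concludes via the monotonicity $\lspend_{*,n+1}\le\lspend_{*,n}$ in place of Borel--Cantelli. (Also, a small correction: by Lemma~\ref{lemma:indep} the marginals are among $\bar\theta_1,\bar\theta_2,\bar\theta_3$ only, not $\bar\theta_0$, and note that the exponent in the right-tail bound is $\bar\gamma_r/(\bar\gamma_r-1)>0$, as $\bar\gamma_r>1$.)
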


\begin{proof}
If $(W,(b_w)_{w\in W})$ is admissible, then write $A(W,(b_w)_{w\in W})$ to denote the event
\[ A(W,(b_w)_{w\in W}) = \{ W_n(T_\infty,3)=W, (\bar\type_w(T_\infty,3))_{w\in W}=(b_w)_{w\in W} \}. \]
Let $\epsilon>0$, then
\begin{align*}
\Prob(\lspend_{*,n}\ge\epsilon)
&= \Prob(\lspend_{j,n}\ge\epsilon \text{ for some } 1\le j\le\Delta_n(X_n)) \\
&\le \sum_{1\le j\le \Delta_n(X_n)} \Prob(\lspend_{j,n}\ge\epsilon) \\
&= \sum_{(W,(b_w)_{w\in W})} \Prob(A(W,(b_w)_{w\in W}))
	\sum_{1\le j\le\card{W}} \Prob(\lspend_{j,n}\ge\epsilon \mid A(W,(b_w)_{w\in W})),
\end{align*}
where the sum is taken over all admissible pairs. For sake of notation set $c=\frac16(\sqrt{205}-7)$. If $(W,(b_w)_{w\in W})$ is admissible, then, under $\Prob(\,\cdot \mid A(W,(b_w)_{w\in W}))$, the random variable $\lspend_{j,n}$ has the same distribution as $c\bar\alpha^{-n}\bar\theta_i$ for some $i\in\{1,2,3\}$, see Lemma~\ref{lemma:indep}. For $s\in\R$ set
\[ M(s) = \max\{\Expect(e^{s\bar\theta_1}), \Expect(e^{s\bar\theta_2}), \Expect(e^{s\bar\theta_3})\}. \]
Fix some $s>0$; then $M(cs)$ is finite due to Remark~\ref{remark:laplace}. Applying Markov's inequality yields
\[ \Prob(c\bar\alpha^{-n}\bar\theta_i\ge\epsilon)
	= \Prob(e^{cs\bar\theta_i}\ge e^{s\epsilon\bar\alpha^n})
	\le e^{-s\epsilon\bar\alpha^n} M(cs) \]
for all $i\in\{1,2,3\}$. Hence we obtain
\begin{align*}
\Prob(\lspend_{*,n}\ge\epsilon)
&\le \sum_{(W,(b_w)_{w\in W})} \Prob(A(W,(b_w)_{w\in W})) \card{W} e^{-s\epsilon\bar\alpha^n} M(cs) \\
&= e^{-s\epsilon\bar\alpha^n} M(cs) \sum_{(W,(b_w)_{w\in W})} \Prob(A(W,(b_w)_{w\in W})) \card{W} \\
&= e^{-s\epsilon\bar\alpha^n} M(cs) \Expect(\Delta_n(X_n))
\end{align*}
using Lemma~\ref{lemma:indep} once again. Since $\Delta_n(X_n) = \mathbold{\bar c}^\num_n(T_\infty,3)\cdot(1,1,1)^t$, a short computation shows that
\[ \Expect(\Delta_n(X_n))
   = \bigl(\tfrac12+\tfrac3{82}\sqrt{205}\,\bigr) \cdot \bigl(\tfrac43+\tfrac1{15}\sqrt{205}\,\bigr)^n
   + \bigl(\tfrac12-\tfrac3{82}\sqrt{205}\,\bigr) \cdot \bigl(\tfrac43-\tfrac1{15}\sqrt{205}\,\bigr)^n
\le 3\bar\alpha^n \]
for all $n\ge0$. Therefore
\[ \Prob(\lspend_{*,n}\ge\epsilon) \le 3\bar\alpha^n e^{-s\epsilon\bar\alpha^n} M(cs) \]
for all $n\ge0$. By monotonicity
\[ \Prob\Bigl(\lim_{n\to\infty}\lspend_{*,n}\ge\epsilon\Bigr) = 0 \]
and, as $\epsilon>0$ is arbitrary, $\lspend_{*,n}\to0$ almost surely.
\end{proof}

Let $\Omega''$ be the set of all $\omega\in\Omega'$ with the property that the assertions of Lemma~\ref{lemma:peaks} and Lemma~\ref{lemma:times} hold. Then $\Prob(\Omega'')=1$. Using the previous preparations we are now able to prove that the curve $X$ is almost surely self-avoiding and that the random times $\ltime_{j,n}$ are almost surely equal to the consecutive hitting times on the set $VG_n$.

\begin{theorem}\label{theorem:prop}
On $\Omega''$ the following holds:
\begin{itemize}
\item The limit curve $X$ is self-avoiding.
\item For any $1\le j\le\Delta_n(X_n)$,
	\[ \ltime_{j,n} = \time_{j,n}(X) = \inf\{t > \time_{j-1,n}(X) \,:\, X(t)\in VG_n\}. \]
\end{itemize}
\end{theorem}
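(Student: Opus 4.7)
The plan is to read off both conclusions from the two preparatory lemmas that have just been established: Lemma~\ref{lemma:peaks} gives qualitative control on where $X$ may revisit vertices or previously traversed pieces, while Lemma~\ref{lemma:times} gives the quantitative time-diameter bound $\lspend_{*,n}\to0$. Throughout the proof we work on $\Omega''$, where both lemmas hold simultaneously.

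For self-avoidance, the natural route is contradiction. Suppose $X(s)=X(t)$ for some $0\le s<t\le\hit(X)$. Using $\lspend_{*,n}\to0$, choose $n$ large enough that $\lspend_{*,n}<\tfrac12(t-s)$, and locate the times in the partition given by $\bigl\{[\ltime_{k-1,n},\ltime_{k,n}]\bigr\}_{k=1}^{\Delta_n(X_n)}$, say $s\in[\ltime_{i-1,n},\ltime_{i,n}]$ and $t\in[\ltime_{j-1,n},\ltime_{j,n}]$ with $i\le j$. If $j\le i+1$, then $t-s\le\lspend_{i,n}+\lspend_{j,n}\le2\lspend_{*,n}<t-s$, a contradiction, so $j\ge i+2$, i.e., $i<j-1$. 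But then Lemma~\ref{lemma:peaks} asserts $X([\ltime_{i-1,n},\ltime_{i,n}])\cap X([\ltime_{j-1,n},\ltime_{j,n}])=\emptyset$, contradicting $X(s)=X(t)$.

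For the time identification, let $w_j$ be the $j$\ndash th word in $W_n(X_n)=W_n(X)$ (using Theorem~\ref{theorem:conv}). The identity $\ltime_{j,n}=\time_{j,n}(X)$ is essentially built into Theorem~\ref{theorem:conv}: since $X([\ltime_{j-1,n},\ltime_{j,n}])\subseteq\psi_{w_j}(K)$, and the only intersection of $\psi_{w_j}(K)$ with the next $n$\ndash part $\psi_{w_{j+1}}(K)$ is the single point $X(\ltime_{j,n})$, the time $\ltime_{j,n}$ is the first exit time of $X$ from $\psi_{w_j}(K)$ after $\ltime_{j-1,n}$, which is what defines $\time_{j,n}(X)$. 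The infimum characterisation is then where self-avoidance is used: for $t\in(\ltime_{j-1,n},\ltime_{j,n})$, if $X(t)\in VG_n$, then $X(t)\in\psi_{w_j}(VG_0)$, but the cardinality bound $\card{X([0,\infty])\cap\psi_{w_j}(VG_0)}\le2$ from Lemma~\ref{lemma:peaks} leaves only the corners $X(\ltime_{j-1,n}),X(\ltime_{j,n})$ as candidates, and self-avoidance now rules these out since $t\notin\{\ltime_{j-1,n},\ltime_{j,n}\}$.

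The main obstacle is the self-avoidance step, because it is the one place where a purely geometric argument (the no-peaks structure underlying Lemma~\ref{lemma:peaks}) must be combined with a genuinely stochastic renormalisation input ($\lspend_{*,n}\to0$ from Lemma~\ref{lemma:times}): without the renormalisation bound one cannot rule out self-intersections within the same or adjacent $n$\ndash parts, and without the non-intersection of distant segments one cannot handle pairs $s<t$ that are far apart in time.
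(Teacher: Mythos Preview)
Your proposal is correct and follows essentially the same route as the paper: for self-avoidance you combine $\lspend_{*,n}\to0$ (Lemma~\ref{lemma:times}) with the disjointness of non-adjacent segments (Lemma~\ref{lemma:peaks}) exactly as the paper does (the paper uses the threshold $\tfrac13(t_2-t_1)$ rather than your $\tfrac12$, but either works), and for the time identification the paper simply says it ``follows immediately using the first statement, Lemma~\ref{lemma:peaks}, and Theorem~\ref{theorem:conv}''---your more detailed unpacking via the cardinality bound and self-avoidance is a correct elaboration of that one-line claim.
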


\begin{proof}
Fix $\omega\in\Omega''$ and consider times $0\le t_1<t_2\le\hit(X)$. By Lemma~\ref{lemma:times} there is an integer $n\ge0$ such that $\lspend_{*,n}<\frac13(t_2-t_1)$. Thus there are indices $i,j\in\{1,\dotsc,\Delta_n(X_n)-1\}$ with $i<j-1$ such that $t_1 \in [\ltime_{i-1,n},\ltime_{i,n}]$ and $t_2 \in [\ltime_{j-1,n},\ltime_{j,n}]$. Since $i<j-1$, Lemma~\ref{lemma:peaks} implies that $X(t_1)\ne X(t_2)$, which proves that $X$ is self-avoiding.

The second statement follows immediately using the first statement, Lemma~\ref{lemma:peaks}, and Theorem~\ref{theorem:conv}.
\end{proof}

\begin{remark}
For $\omega\in\Omega''$, the topological closure of the discrete set
\[ \ltime = \{ \ltime_{j,n} \,:\, n\ge 0, 0\le j\le\Delta_n(X_n) \} \]
contains the interval $[0,\hit(X)]$. Hence $X$ is the continuous extension of
\[ \ltime \to K, \quad \ltime_{j,n} \mapsto X_n(\time_{j,n}(X_n)). \]
\end{remark}

\begin{remark}
The map
\[ \mathcal T_n\to C, \quad t\mapsto \LI(u_1tu_2) \]
is not one-to-one. However, it is possible to use this map and the law of the labelled multi-type Galton-Watson tree of Proposition~\ref{proposition:tree2} to describe the law of the process $X$.
\end{remark}

We use the following lemma as a partial substitute for the missing Markov property in order to prove some properties of the process $(X(t))_{t\ge0}$.

\begin{lemma}\label{lemma:events}
For any $n\in\N_0$, the following holds:
\begin{itemize}
\item If $t\ge s$ and $\norm{X(s)}_2 \ge 2^{-n}$, then $\norm{X(t)}_2 \ge 2^{-n}$.
\item If $t\ge s$, then $\norm{X(t)}_2 \ge \frac12 \norm{X(s)}_2$.
\item On $\Omega''$ we have
	\[ \{ \norm{X(t)}_2 \ge 2^{-n} \}
		= \{ \sup\{\norm{X(s)}_2 \,:\, s\le t\} \ge 2^{-n} \}
		= \{ \ltime_{1,n} \le t \}
		= \{ \lspend_{1,n} \le t \} \]
\end{itemize}
\end{lemma}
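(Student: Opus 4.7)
The backbone of the proof is a geometric observation: for every $n \ge 0$,
\[ K \cap \{x \in \R^2 \,:\, \norm{x}_2 \le 2^{-n}\} = \psi_{1^n}(K), \]
and within this copy the only points at distance exactly $2^{-n}$ from $u_1 = 0$ are the corners $\psi_{1^n}(u_2)$ and $\psi_{1^n}(u_3)$. Applying $\psi_{1^n}$ reduces this to the case $n=0$, where a short calculation shows that the unit circle around $u_1$ meets the closed convex hull of $u_1, u_2, u_3$ only at $u_2$ and $u_3$ (the circular arc between them lies strictly outside this hull); the equality of the sets is then obtained by iterating $K = \psi_1(K) \cup \psi_2(K) \cup \psi_3(K)$ and using that distinct $n$-parts of $K$ meet only at shared corners, so in particular that $u_1 \in \psi_w(K)$ happens only for $w = 1^n$.

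Working on $\Omega''$, since $u_1 \in \psi_w(K)$ only when $w = 1^n$, the first word of $W_n(X) = W_n(X_n)$ equals $1^n$. Hence $X\bigl([0,\ltime_{1,n}]\bigr) \subseteq \psi_{1^n}(K)$ and the exit point satisfies $X(\ltime_{1,n}) \in \{\psi_{1^n}(u_2), \psi_{1^n}(u_3)\}$. For $s \in (0, \ltime_{1,n})$ the curve lies in $\psi_{1^n}(K) \setminus VG_n$, so the geometric observation yields $\norm{X(s)}_2 < 2^{-n}$; at $s = \ltime_{1,n}$ we have $\norm{X(s)}_2 = 2^{-n}$. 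The curve cannot re-enter $\psi_{1^n}(K)$ afterwards: any re-entry would force a second passage through the boundary $\{\psi_{1^n}(u_2), \psi_{1^n}(u_3)\}$ of $\psi_{1^n}(K)$ inside $K$, but the exit corner is excluded by self-avoidance (Theorem~\ref{theorem:prop}) and the remaining corner is excluded by $\card{X([0,\infty]) \cap \psi_{1^n}(VG_0)} \le 2$ from Lemma~\ref{lemma:peaks}, since $u_1$ and $X(\ltime_{1,n})$ already saturate that bound. Consequently $\{s \ge 0 \,:\, \norm{X(s)}_2 \ge 2^{-n}\} = [\ltime_{1,n}, \infty]$, from which the first bullet follows immediately.

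The second bullet follows by choosing $n \in \N_0$ with $2^{-n} \le \norm{X(s)}_2 < 2^{1-n}$ (the case $\norm{X(s)}_2 = 0$ is trivial) and applying the first bullet to obtain $\norm{X(t)}_2 \ge 2^{-n} > \tfrac12 \norm{X(s)}_2$. For the third bullet, $\lspend_{1,n} = \ltime_{1,n}$ because $\ltime_{0,n} = 0$; the equality $\{\norm{X(t)}_2 \ge 2^{-n}\} = \{\ltime_{1,n} \le t\}$ is precisely the characterisation proved above; and the sandwich with $\{\sup\{\norm{X(s)}_2 \,:\, s \le t\} \ge 2^{-n}\}$ follows from continuity of $X$ combined with the monotonicity supplied by the first bullet. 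The main obstacle is the geometric observation, which must be checked carefully in view of the fractal nature of $K$; once it is in hand, the probabilistic content of the lemma reduces to self-avoidance and the two-corners restriction on $\Omega''$.
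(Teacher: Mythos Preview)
Your proof is correct and follows essentially the same route as the paper: the key geometric fact that $K\cap\{x:\norm{x}_2\le 2^{-n}\}=\psi_{1^n}(K)$ with boundary $\{\psi_{1^n}(u_2),\psi_{1^n}(u_3)\}$, combined with self-avoidance and the two-corner bound from Lemma~\ref{lemma:peaks} on $\Omega''$, to conclude that $\ltime_{1,n}$ is simultaneously the first hitting time of $\{2^{-n}u_2,2^{-n}u_3\}$ and the last exit time from $2^{-n}K$. Your write-up is considerably more explicit than the paper's sketch; the only point worth noting is that you establish the first bullet on $\Omega''$ rather than on all of $\Omega'$, which is in fact the appropriate level of generality (a peak in $\psi_{1^n}(K)$ would violate the first bullet, so the paper's phrase ``simple consequence of the geometry of $K$'' tacitly assumes the no-peak property).
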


\begin{proof}
The first statement is a simple consequence of the geometry of $K$ and implies the second. For the third one note that on $\Omega''$ the curve $X$ is self-avoiding, has no peaks and $\ltime_{1,n}=\lspend_{1,n}$ is the hitting time of $\{2^{-n}u_2, 2^{-n}u_3\} = \psi_{11\dotsm1}(\{u_2,u_3\})$, where $11\dotsm1$ is the word of length $n$ whose letters are all equal to $1$. For $n\ge1$, this implies that the first hitting time of $\{2^{-n}u_2, 2^{-n}u_3\} = \psi_{11\cdots1}(\{u_2,u_3\})$ is equal to the last exit time of the set $2^{-n} K = \psi_{11\cdots1}(K)$. This implies the statement.
\end{proof}

\begin{theorem}\label{theorem:prop-lerw}
The following holds:
\begin{enumerate}[\normalfont(1)]
\item There are $C_{7,\ell},C_{8,\ell}>0$ such that for all $s,t\in[0,\infty)$ and all $\delta\in[0,1]$,
	\[ C_{3,\ell} \exp(-C_{7,\ell} (\delta t^{-\bar\gamma_\ell})^{1/(1-\bar\gamma_\ell)})
		\le \Prob( \norm{X(t)}_2 \ge \delta ) \]
	and
	\begin{align*}
	\Prob( \norm{X(s+t) - X(s)}_2 \ge \delta )
		&\le \Prob( \sup\{ \norm{X(s+u)-X(s)}_2 \,:\, 0\le u\le t \} \ge \delta ) \\
		&\le C_{5,\ell} \exp(-C_{8,\ell} (\delta t^{-\bar\gamma_\ell})^{1/(1-\bar\gamma_\ell)}).
	\end{align*}
\item There are $C_{7,r},C_{8,r}>0$ such that for all $t\in[0,\infty)$ and all $\delta\in[0,1]$,
	\begin{align*}
	C_{3,r} \exp(-C_{7,r} (\delta^{-1/\bar\gamma_\ell} t)^{\bar\gamma_r/(\bar\gamma_r-1)})
		&\le \Prob( \sup\{ \norm{X(u)}_2 \,:\, 0\le u\le t \} \le \delta ) \\
		&\le \Prob( \norm{X(t)}_2 \le \delta )
	\end{align*}
	and
	\[ \Prob( \norm{X(t)}_2 \le \delta ) \le
		C_{5,r} \exp(-C_{8,r} (\delta^{-1/\bar\gamma_\ell} t)^{\bar\gamma_r/(\bar\gamma_r-1)}). \]
\item For any $p>0$, there exist constants $C_9(p),C_{10}(p)>0$
	such that for all $s\in[0,\infty)$ and all $t\in[0,1]$,
	\begin{gather*}
	C_9(p) \, t^{p\bar\gamma_\ell} \le \Expect(\norm{X(t)}_2^p) \qquad\text{and}\qquad
	\Expect(\norm{X(s+t)-X(s)}_2^p) \le C_{10}(p) \, t^{p\bar\gamma_\ell}.
	\end{gather*}
\item There are constants $C_{11},C_{12}>0$ such that for all $s\in[0,\infty)$
	\begin{gather*}
	\limsup_{t\searrow 0}
		\frac{\norm{X(s+t)-X(s)}_2}{t^{\bar\gamma_\ell}(\log\log(1/t))^{1-\bar\gamma_\ell}} \le C_{11}
		\rlap{\qquad and} \\
	\liminf_{t\searrow 0}
		\frac{\norm{X(t)}_2}{t^{\bar\gamma_\ell}(\log\log(1/t))^{-\bar\gamma_\ell(1-1/\bar\gamma_r)}} \ge C_{12}
	\end{gather*}
	hold almost surely. Note that $1-\bar\gamma_\ell\approx0.162475>0$ and
	$-\bar\gamma_\ell(1-1/\bar\gamma_r)\approx-0.206594<0$.
\item The Hausdorff dimension $\dim_H X([0,\infty])$ of the path $X([0,\infty])$ almost surely satisfies
	\[ \dim_H X([0,\infty]) = \frac{1}{\bar\gamma_\ell} = \frac{\log\bar\alpha}{\log2} \approx 1.193995. \]
\end{enumerate}
\end{theorem}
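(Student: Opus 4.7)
The plan is to translate each sample-path statement into a statement about the random times $\lspend_{j,n}$ and then invoke the tail bounds of Lemma~\ref{lemma:mg-bounds}. For parts (1)--(2), the dyadic calibration $\delta = 2^{-n}$ in Lemma~\ref{lemma:events} gives
\[
\{\|X(t)\|_2 \ge 2^{-n}\} = \{\sup_{0\le u\le t}\|X(u)\|_2 \ge 2^{-n}\} = \{\lspend_{1,n} \le t\},
\]
and from Lemma~\ref{lemma:indep} together with Corollary~\ref{corollary:length} the random variable $\lspend_{1,n}$ has the same law as $c\bar\alpha^{-n}\bar\theta$ with $c = \tfrac{1}{6}(\sqrt{205}-7)$ and $\bar\theta \in \{\bar\theta_1,\bar\theta_2,\bar\theta_3\}$. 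Since $2^{-n}=\bar\alpha^{-n\bar\gamma_\ell}$, substituting $\bar\alpha^n = \delta^{-1/\bar\gamma_\ell}$ into the two-sided estimates of Lemma~\ref{lemma:mg-bounds} for $\Prob(\bar\theta\le\cdot)$ and $\Prob(\bar\theta\ge\cdot)$ reproduces exactly the exponents $(\delta t^{-\bar\gamma_\ell})^{1/(1-\bar\gamma_\ell)}$ of (1) and $(\delta^{-1/\bar\gamma_\ell}t)^{\bar\gamma_r/(\bar\gamma_r-1)}$ of (2). Arbitrary $\delta \in [0,1]$ is handled by squeezing between consecutive dyadic values. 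For the shifted upper bound in (1) with $s>0$, we choose $n$ with $2^{-n}\asymp\delta$; the event that $X$ moves by at least $\delta$ during $[s,s+t]$ forces $X$ to exit at least one of the at most a bounded number of $n$-parts containing $X(s)$, so a union bound on the independent exit times supplied by Lemma~\ref{lemma:indep} reduces the task to the unshifted estimate.

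Part (3) follows by integration: using $\Expect(\|X(t)\|_2^p) = p\int_0^\infty \delta^{p-1}\Prob(\|X(t)\|_2\ge\delta)\,d\delta$, the change of variable $\delta = t^{\bar\gamma_\ell}x$ together with the upper bound in (1) yields $\Expect(\|X(s+t)-X(s)\|_2^p)\le C_{10}(p)\,t^{p\bar\gamma_\ell}$, and analogously the lower bound of (1) delivers the matching lower estimate. Part (4) is of L\'evy/Khinchin type: for the upper modulus we apply the upper bound of (1) to all increments $X((k+1)2^{-m})-X(k2^{-m})$ with $0\le k\le 2^m$, with threshold $C_{11}2^{-m\bar\gamma_\ell}(\log m)^{1-\bar\gamma_\ell}$; for $C_{11}$ large enough the resulting tail probabilities are summable against $2^m$, and Borel--Cantelli together with a standard chaining argument deliver the $\limsup$. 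For the lower modulus, we combine the lower bound of (2) at the geometric times $t_k = \bar\alpha^{-k}$ with the independence of the disjoint recursive scales provided by Lemma~\ref{lemma:indep}, so that a reverse Borel--Cantelli argument forces $\|X(t_k)\|_2$ to fall below the claimed threshold infinitely often.

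For part (5), the upper bound $\dim_H X([0,\infty]) \le 1/\bar\gamma_\ell$ is immediate from (4): since $X$ is almost surely H\"older-$(\bar\gamma_\ell - \varepsilon)$ continuous on $[0,\hit(X)]$ for every $\varepsilon>0$, its image has Hausdorff dimension at most $1/(\bar\gamma_\ell - \varepsilon)$, and we let $\varepsilon\downarrow 0$. For the matching lower bound, cover $X([0,\infty])$ by the $\Delta_n(X_n)$ pieces $\{\psi_w(K):w\in W_n(X_n)\}$, each of diameter $2^{-n}$: by Proposition~\ref{proposition:tree2}(3) and Remark~\ref{remark:count}, $\bar\alpha^{-n}\Delta_n(X_n)$ converges almost surely to a strictly positive random limit, so the naive $s$-covering sum is of order $\bar\alpha^n 2^{-ns} = 2^{n(1-s\bar\gamma_\ell)/\bar\gamma_\ell}$, which is bounded away from $0$ for $s < 1/\bar\gamma_\ell$. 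To upgrade this to $\dim_H\ge 1/\bar\gamma_\ell$, place a Frostman-type measure $\mu$ on $X([0,\infty])$ by recursively distributing unit mass among the $n$-parts visited in proportion to their asymptotic share $\bar\alpha^{-n}$; the tail bounds of Lemma~\ref{lemma:mg-bounds} applied to the conditional copies of $\bar\theta$ from Lemma~\ref{lemma:indep} control the fluctuations uniformly across the tree, yielding $\mu(\psi_w(K)) \le C(2^{-n})^{1/\bar\gamma_\ell - \varepsilon}$ with enough regularity to make the $s$-energy $\iint \|x-y\|_2^{-s}\,d\mu(x)\,d\mu(y)$ finite for every $s < 1/\bar\gamma_\ell$, so Frostman's lemma closes the gap.

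The main technical obstacles I expect are the upper bound in (1) with nontrivial shift $s$ (the process lacks a genuine Markov property, so one must rely on the recursive conditional independence of Lemma~\ref{lemma:indep} as a substitute) and the Frostman-measure construction in (5) (which requires uniform, not merely pointwise almost sure, control on how many $n$-parts the curve visits inside each ancestor part); both difficulties are ultimately handled by the explicit exponential tail estimates of Lemma~\ref{lemma:mg-bounds}, which is why those estimates are the technical backbone of this section.
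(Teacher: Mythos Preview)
Your plan for parts (1)--(3) is essentially the paper's own argument: reduce the spatial events to tail probabilities for $\lspend_{j,n}$ via Lemma~\ref{lemma:events}, identify the law of $\lspend_{j,n}$ as a scaled copy of some $\bar\theta_i$ via Lemma~\ref{lemma:indep}, and apply Lemma~\ref{lemma:mg-bounds}. For the shifted upper bound in (1) the paper is slightly more explicit than you are: it bounds by $\Prob(\ltime_{j-1,n+1}\ge s,\ \lspend_{j,n+1}\le t\ \text{for some }j\ge1)$ and then uses that $\lspend_{j,n+1}$ is independent of $\ltime_{j-1,n+1}$ under the conditioning of Lemma~\ref{lemma:indep}. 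Your ``bounded number of $n$-parts containing $X(s)$'' picture is not quite right (the relevant $j$ is not determined by $X(s)$ alone), but the underlying mechanism is the same.

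Your treatment of part (4) contains a direction error. To prove $\liminf_{t\searrow0}\|X(t)\|_2/\phi(t)\ge C_{12}$ you must show that the events $\{\|X(t_k)\|_2\le C_{12}\phi(t_k)\}$ occur only \emph{finitely often}, which calls for the \emph{upper} bound in (2) and the ordinary Borel--Cantelli lemma, not the lower bound and a reverse Borel--Cantelli argument. What you wrote (``falls below the claimed threshold infinitely often'') would, if anything, give an upper bound on the $\liminf$. The paper simply says ``the usual Borel--Cantelli argument'', so your added detail is welcome, but it has to point the right way.

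Part (5) is where you genuinely diverge from the paper. The paper does \emph{not} argue directly: it observes that $X([0,\infty])$ is the limit set of a random recursive construction with multiple types and invokes Theorem~3.8 of \cite{hattori2000exact}, which handles both bounds at once. Your approach (H\"older for the upper bound, a hand-built Frostman measure for the lower bound) is in principle viable and more self-contained, but note two things. First, the uniform H\"older continuity you need does not follow from (4), which is only a \emph{pointwise} modulus at each fixed $s$; it follows from the moment bound in (3) via Kolmogorov's continuity criterion. Second, the Frostman step is precisely where the work is: you need \emph{uniform} control (over all ancestor parts simultaneously) on how many $n$-parts the curve visits, not just almost sure convergence of $\bar\alpha^{-n}\Delta_n(X_n)$, and making this rigorous essentially re-proves the cited theorem. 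Citing \cite{hattori2000exact} is cleaner; your route is more transparent but substantially longer if carried out in full.
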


\begin{proof}
In order to prove the first statement choose $n\in\N$ such that $2^{-n}\le\delta\le2^{-(n-1)}$. Then, using Lemma~\ref{lemma:events},
\[ \Prob( \lspend_{1,n-1} \le t ) \le \Prob( \norm{X(t)}_2 \ge \delta ) \]
and
\begin{multline*}
\Prob( \sup\{ \norm{X(s+u)-X(s)}_2 \,:\, 0\le u\le t \} \ge \delta ) \\
	\le \Prob( \ltime_{j-1,n+1} \ge s, \lspend_{j,n+1} \le t \text{ for some } j\ge1 ).
\end{multline*}
By conditioning as in Lemma~\ref{lemma:indep}, the distribution of $\lspend_{j,m}$ is equal to the distribution of $\frac16(\sqrt{205}-7)\bar\alpha^{-m}\bar\theta_i$ for some $i\in\{1,2,3\}$ and $\lspend_{j,m}$ is independent of $\ltime_{j-1,m}$. Hence the bounds on the tail probability follow from Lemma~\ref{lemma:mg-bounds}. More or less the same arguments yield the second statement. By integrating the bounds of the first statement we get the bounds on $\Expect(\norm{X(t)}_2^p)$ and $\Expect(\norm{X(s+t)-X(s)}_2^p)$, respectively. The fourth statement follows by the usual Borel-Cantelli argument. The path $X([0,\infty])$ is the limit set of a random recursive construction with multiple types. Thus the formula for the Hausdorff dimension follows from Theorem~3.8 in \cite{hattori2000exact}, where such random sets are studied in general.
\end{proof}

\begin{remark}
The properties (1), (3), (4) proved above are slightly weaker forms of \cite[Theorem~4.3, Corollary~4.4, Theorem~4.7]{barlow1988brownian} (for Brownian motion) and \cite[Theorem~4.5, Corollary~4.6, Theorem~4.8]{kumagai1993construction} (for more general diffusion processes that contain Brownian motion as a special case), respectively. In several cases the statements of the previous theorem are formulated for the special increment $X(t)=X(t)-X(0)$ and not for a general increment $X(s+t)-X(s)$, which is, we only consider the starting time $s=0$. One reason for the weaker statements is the lack of the Markov property. Another difficulty in the general case lies in the fact that parts of the curve that lie in different $k$\ndash parts of the Sierpi\'nski gasket $K$ can still be close to each other near the vertices where these $k$\ndash parts are connected. At the corner (time $s=0$), this cannot happen. It seems plausible, however, that the strong forms of the cited statements also hold in our case. Fortunately, the formula for the Hausdorff dimension does not rely on the first four properties, but only on the fact that the path $X([0,\infty])$ is the limit set of a specific random recursive construction whith multiple types.
\end{remark}

\begin{remark}
We note that all we have proved in this section remains true if we replace $T_n$ by $S_n^3$. In particular, $\LI(u_1S_n^3u_2,\bar\alpha^n)$ converges almost surely in $(C,d_C)$ to a limit curve and the results of \ref{theorem:conv}\nobreakdash--\ref{theorem:prop-lerw} hold with $S_n^3$ in place of $T_n$.
\end{remark}

\section{Limit of the tree metric}
\label{sec:metric}

Consider a generic $\omega\in\Omega$. Then $T_n(\omega)$ is a spanning tree on $G_n$ and it is the trace $\Tr^m_n T_m(\omega)$ for all $m\ge n$. Let $u,v$ be two vertices in $VG_n$ for some $n\ge0$. Their distance $d_{T_m(\omega)}(u,v)$ with respect to the spanning tree $T_m(\omega)$ is well-defined and Corollary~\ref{corollary:length} indicates that $\bar\alpha^{-m} d_{T_m(\omega)}(u,v)$ converges for $m\to\infty$, where $\bar\alpha = \frac43 + \frac1{15}\sqrt{205}$ is the dominating eigenvalue of Proposition~\ref{proposition:tree2}. If this limit exists for all $u,v$ in the countable set
\[ V_* = \bigcup_{n\ge0} VG_n = \bigcup_{w\in\Words^*} \psi_w(VG_0), \]
and it is positive whenever $u \neq v$, then the limit defines a metric $d_{*,\omega}$ on $V_*$:
\[ d_{*,\omega}(u,v) = \lim_{m\to\infty} \bar\alpha^{-m} d_{T_m(\omega)}(u,v) \]
for all $u,v\in V_*$. In the following we show that $d_{*,\omega}$ exists for almost all $\omega\in\Omega$ and yields a random metric $d_*$ on $V_*$. Let $\Metrics(V_*)$ be the set of all metrics on $V_*$. We equip $\Metrics(V_*)$ with the $\sigma$\ndash algebra $\SigmaAlgebraMetrics(V_*)$ which is induced by the mappings
\[ \Metrics(V_*) \to \R, \quad d \mapsto d(u,v) \]
for $u,v\in V_*$. We recall some notions from metric theory, see for instance \cite{chiswell2001introduction}. A metric space $(X,d)$ is \emph{$0$\ndash hyperbolic} if
\[ d(u,v) + d(x,y) \le \max\{ d(u,x) + d(v,y), d(u,y) + d(v,x) \} \]
holds for all $u,v,x,y\in X$ (\emph{four point condition}). A \emph{metric segment} in $(X,d)$ is the image of an isometric embedding $[a,b]\to X$ for some $a,b\in\R$. Finally, $(X,d)$ is called an \emph{$\R$\ndash tree} if, for any $x,y\in X$, there is a unique arc connecting $x,y$ and this arc is a metric segment. We note that $(X,d)$ is an $\R$\ndash tree if and only if $(X,d)$ is connected and $0$\ndash hyperbolic, see \cite[Lemma~2.4.13]{chiswell2001introduction}.

\begin{theorem}\label{theorem:randommetric}
For almost all $\omega\in\Omega$ the limit
\[ d_{*,\omega}(u,v) = \lim_{m\to\infty} \bar\alpha^{-m} d_{T_m(\omega)}(u,v) \]
exists for all $u,v\in V_*$ and yields a metric $d_{*,\omega}$ on the set $V_*$, such that $(V_*,d_{*,\omega})$ is a $0$\ndash hyperbolic and totally bounded metric space. Thus, for a suitable subset $\Omega'''\subseteq\Omega$ of probability $1$,
\[ \Omega'''\to\Metrics(V_*), \quad \omega\mapsto d_{*,\omega} \]
is a random metric in $(\Metrics(V_*),\SigmaAlgebraMetrics(V_*))$. Furthermore, for $\omega\in\Omega'''$ the Cauchy completion of $(V_*, d_{*,\omega})$ is a compact $\R$\ndash tree.
\end{theorem}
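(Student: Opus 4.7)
Proof plan. The plan is to construct $d_{*,\omega}$ pointwise on the countable set $V_*$, pass to a full-measure set on which it is simultaneously a well-defined positive metric, and then deduce $0$\ndash hyperbolicity, total boundedness, measurability, and the $\R$\ndash tree structure of the completion from properties of the finite approximations $T_m$ together with the tail estimates of Lemma~\ref{lemma:mg-bounds}.

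First I fix a pair $u,v\in V_*$ with $u\ne v$, choose $n_0$ minimal with $u,v\in VG_{n_0}$, and for every $m\ge n_0$ decompose the path $u T_m v$ along the finitely many $n_0$\ndash parts $\psi_{w_1}(G_{m-n_0}),\dotsc,\psi_{w_L}(G_{m-n_0})$ it crosses. Inside each such part the path is a self-avoiding walk between two corners $a,b\in\psi_{w_j}(VG_0)$ in the restricted spanning forest $\pi_{w_j}(T_m)\in\mathcal Q_{m-n_0}$. Conditional on its type at level $n_0$, this restricted forest is an independent uniform spanning forest of the corresponding class, so the analogue of Corollary~\ref{corollary:length} gives almost sure convergence of $\bar\alpha^{-(m-n_0)}\,d_{\pi_{w_j}(T_m)}(a,b)$ to $\tfrac16(\sqrt{205}-7)\bar\theta^{(j)}$ for an independent copy $\bar\theta^{(j)}$ of some $\bar\theta_i$. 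Summing the $L$ contributions shows that $\bar\alpha^{-m} d_{T_m(\omega)}(u,v)$ converges almost surely to a finite limit, which is positive almost surely because each $\bar\theta_i>0$ almost surely by Lemma~\ref{lemma:mg-bounds}.

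Taking the countable intersection over pairs $u,v\in V_*$ of these full-measure events gives a set $\Omega'''$ of probability one on which $d_{*,\omega}$ is defined for all pairs and is strictly positive off the diagonal. Symmetry and the triangle inequality are inherited from the metrics $d_{T_m}$, and since each $T_m$ is a graph tree its metric satisfies the four point condition; as the four point condition is closed under pointwise convergence, $(V_*,d_{*,\omega})$ is a $0$\ndash hyperbolic metric space on $\Omega'''$. Measurability of $\omega\mapsto d_{*,\omega}$ with respect to $\SigmaAlgebraMetrics(V_*)$ is immediate, since each coordinate $d_{*,\omega}(u,v)$ is an almost sure limit of measurable functions and the index set $V_*\times V_*$ is countable.

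For total boundedness I would write $V_*=\bigcup_{w\in\Words^n}\psi_w(V_*)$ and bound the diameter of $\psi_w(V_*)$ in $d_{*,\omega}$ by $C\bar\alpha^{-n}\bar\theta^{(w)}$, where the $\bar\theta^{(w)}$ are distributed (conditionally on types at level $n$) as one of the $\bar\theta_i$. The stretched-exponential right tail $\Prob(\bar\theta_i\ge s)\le C_{5,r}\exp(-C_{6,r}\,s^{\bar\gamma_r/(\bar\gamma_r-1)})$ from Lemma~\ref{lemma:mg-bounds} makes $3^n\,\Prob(\bar\theta_i\ge\bar\alpha^n\varepsilon)$ summable in $n$ for every $\varepsilon>0$, so Borel--Cantelli yields $\max_{w\in\Words^n}\operatorname{diam}_{d_{*,\omega}}(\psi_w(V_*))\to 0$ almost surely, and hence total boundedness. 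The Cauchy completion is then complete and totally bounded, so compact, and still $0$\ndash hyperbolic since the four point condition passes to limits. To upgrade it to an $\R$\ndash tree I would invoke Chiswell's criterion \cite[Lemma~2.4.13]{chiswell2001introduction}, which reduces the task to showing connectedness. For this I use the nested self-avoiding paths $u T_m v\subseteq u T_{m+1} v$: each vertex $x$ on $u T_m v$ satisfies $d_{*,\omega}(u,x)+d_{*,\omega}(x,v)=d_{*,\omega}(u,v)$ as a limit of the analogous identity in $d_{T_m}$, and the established uniform diameter control on $n$\ndash parts forces the values $\{d_{*,\omega}(u,x)\}$ to be dense in $[0,d_{*,\omega}(u,v)]$; taking closures inside the completion yields an isometric embedding of $[0,d_{*,\omega}(u,v)]$, i.e.\ a geodesic from $u$ to $v$, and density of $V_*$ extends this to arbitrary pairs. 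I expect the main technical obstacle to be the uniform diameter bound: the tails of Lemma~\ref{lemma:mg-bounds} are strong enough, but one must choose the constants in the estimate for $\operatorname{diam}_{d_{*,\omega}}(\psi_w(V_*))$ uniformly in $w$ and handle the dependence introduced by the Galton--Watson refinement at the various levels.
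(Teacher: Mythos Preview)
Your overall strategy matches the paper's closely: decompose tree distances along $n$\ndash parts, use the Galton--Watson structure for almost sure convergence, inherit $0$\ndash hyperbolicity from the finite tree metrics, invoke Lemma~\ref{lemma:mg-bounds} together with Borel--Cantelli for total boundedness, and conclude via Chiswell's criterion. The connectedness argument you sketch (density of the values $d_{*,\omega}(u,x)$ along the nested paths) is a reasonable variant of the paper's construction, which builds the geodesic to an arbitrary point of the completion out of the limit curves of Section~\ref{section:convergence}.

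There is, however, a genuine gap in your total boundedness step. The asserted bound
\[
\operatorname{diam}_{d_{*,\omega}}\bigl(\psi_w(V_*)\bigr)\le C\,\bar\alpha^{-n}\,\bar\theta^{(w)}
\]
is false in general. If the restricted forest $\pi_w(T_n)$ has more than one component (types $\img1,\img2,\img3,\img4$), then two vertices of $\psi_w(V_*)$ lying in different components are joined in $T_m$ by a path that \emph{leaves} $\psi_w$ through one corner and re-enters through another; the portion outside $\psi_w$ is a path in $T_m$ between two level-$n$ vertices that are adjacent in $G_n$ but not in $T_n$, and its rescaled length is not controlled by any local quantity attached to $w$. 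For a part of type $\img4$ all three corners are pairwise disconnected inside the part, and the $d_{*,\omega}$\ndash distance between two of them can be of the order of the full diameter of $(V_*,d_{*,\omega})$, not $\bar\alpha^{-n}$. Thus $\max_{w\in\Words^n}\operatorname{diam}_{d_{*,\omega}}(\psi_w(V_*))$ does \emph{not} tend to zero, and the covering $V_*=\bigcup_w\psi_w(V_*)$ does not witness total boundedness.

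The paper repairs this by bounding only the ``edge lengths'' $d_{*,\omega}(\psi_w(x),\psi_w(y))$ for corners $x,y\in VG_0$ that \emph{are} connected in $\pi_w(T_n)$; each of these genuinely has the law of $\tfrac16(\sqrt{205}-7)\,\bar\alpha^{-n}$ times some $\bar\theta_i$, and Borel--Cantelli with Lemma~\ref{lemma:mg-bounds} gives that they are all at most $2^{-n}$ once $n\ge N(\omega)$. The covering sets are then not the Euclidean cells $\psi_w(V_*)$ but the tree-geodesic neighbourhoods $C_x$ ($x\in VG_n$) consisting of those $y\in V_*$ whose path $xT_m y$ stays within Euclidean distance $2^{-n}$ of $x$; a telescoping sum through successive levels yields $d_{*,\omega}(x,y)\le 2^{-n}$ for $y\in C_x$, and $V_*=\bigcup_{x\in VG_n}C_x$ gives total boundedness. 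Your density-of-values argument for connectedness also leans on the flawed diameter bound, but it goes through unchanged once you substitute this corrected ``edge-length'' estimate.
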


\begin{proof}
For $x,y\in VG_0$ and $w\in\Words^n$, define $\Omega(w,x,y)$ to be the set of all $\omega\in\Omega$ such that, whenever $x,y$ are connected in the restriction $\pi_w(T_n(\omega))$, the curve $\LI(x \pi_w(T_m(\omega)) y, \bar\alpha^m)$ converges in $(C,d_C)$ as $m\to\infty$, $m\ge n$, and the assertions of Theorems~\ref{theorem:conv}\nobreakdash--\ref{theorem:prop} hold. The usual conditioning argument shows that $\Prob(\Omega(w,x,y))=1$ for all $w\in\Words^*$ and all $x,y\in VG_0$. Thus
\[ \Omega''' = \bigcap_{w\in\Words^*}\bigcap_{x,y\in VG_0} \Omega(w,x,y) \]
has probability $1$. Fix an element $\omega\in\Omega'''$. Then for all $u,v\in V_*$ the limit
\[ d_{*,\omega}(u,v) = \lim_{m\to\infty} \bar\alpha^{-m} d_{T_m(\omega)}(u,v) \]
exists and is an element of $[0,\infty)$. By construction of $\Omega'''$, we have $d_{*,\omega}(u,v) > 0$ for all $u,v\in V_*$, $u\ne v$, which are neighbours in $G_n$ for some $n$. Hence $d_{*,\omega}(u,v) > 0$ for all $u,v\in V_*$, $u\ne v$. Furthermore, as $d_{T_m(\omega)}$ is the graph metric of the tree $T_m(\omega)$, it satisfies the triangle inequality and the four point condition. Thus the limit $d_{*,\omega}$ also satisfies the triangle inequality and the four point condition. Altogether we have proved that $(V_*,d_{*,\omega})$ is a $0$\ndash hyperbolic metric space if $\omega\in\Omega'''$. For $x,y\in VG_0$ and $w\in\Words^n$ define $A(w,x,y)$ to be the set of all $\omega\in\Omega'''$, such that, whenever $x,y$ are connected in the restriction $\pi_w(T_n(\omega))$, then $d_{*,\omega}(\psi_w(x),\psi_w(y))\le2^{-n}$. Using the Borel-Cantelli lemma together with the bounds of Lemma~\ref{lemma:mg-bounds}, we see that
\[ A_n = \bigcap_{w\in\Words^n}\bigcap_{x,y\in VG_0} A(w,x,y) \]
holds eventually with probability $1$. Hence, for $\omega\in\Omega'''$, there is an $N=N(\omega)$ such that $\omega\in A_n$ for all $n\ge N$. Fix some $n\ge N$. For $x\in VG_n$ let $C_x = C_x(\omega)$ be the set of all $y\in VG_m$ ($m\ge n$), such that all vertices $v$ on the path $x T_m(\omega) y$ satisfy $\norm{v-x}_2\le2^{-n}$. If $y\in C_x\cap VG_n$, then $d_{*,\omega}(x,y)\le 2^{-n}$. If $y\in C_x\setminus VG_n$, then we can find $x=x_n,x_{n+1},\dotsc,x_m=y$, such that $x_k\in VG_k$ and $x_{k-1},x_k$ are either identical or neighbours in $T_k(\omega)$. Thus
\[ d_{*,\omega}(x,y) \le \sum_{k=n+1}^m d_{*,\omega}(x_{k-1},x_k) \le \sum_{k=n+1}^m 2^{-k} \le 2^{-n}. \]
Thus, if $B_{*,\omega}(x,2^{-n})$ denotes the ball of radius $2^{-n}$ centered at $x$ with respect to $d_{*,\omega}$, then $C_x\subseteq B_{*,\omega}(x,2^{-n})$. Hence
\[ V_* = \bigcup_{x\in VG_n} C_x = \bigcup_{x\in VG_n} B_{*,\omega}(x,2^{-n}), \]
which means that $(V_*,d_{*,\omega})$ is totally bounded. To check measurability we note that $\omega\mapsto d_{T_m(\omega)}(u,v)$ is measurable for fixed $u,v\in V_*$ (if $m$ is sufficiently large). Thus the limit $\omega\mapsto d_{*,\omega}(u,v)$ is measurable, too. By definition of $\SigmaAlgebraMetrics(V_*)$, this implies measurability of $\omega\mapsto d_{*,\omega}$.

In order to prove that the Cauchy completion $(\check V_{*,\omega}, \check d_{*,\omega})$ of $(V_*, d_{*,\omega})$ for $\omega\in\Omega'''$ is an $\R$\ndash tree, it is sufficient to show that the completion is connected, as $0$\ndash hyperbolicity is preserved by completion, see \cite[Lemma~2.2.11]{chiswell2001introduction}. We show that the completion contains a path from $u_1$ to any $x$. Let $x_1,x_2,\dotsc$ be a Cauchy sequence in $V_*$ with $x_n\to x$. Denote by $\alpha_n\colon[0,\infty]\to K$ the limit curve of $\LI(u_1 T_m(\omega) x_n, \bar\alpha^m)$ as $m\to\infty$, which exists by construction of $\Omega'''$. Then $D_n = \alpha_n^{-1}(V_*)$ is a dense subset of $[0,\infty]$ by Lemma~\ref{lemma:times}. Note that $t=d_{*,\omega}(u_1,\alpha_n(t))$ for all $t\in D_n$ such that $t \le \min\{s \,:\, \alpha_n(s)=x_n\}$. Therefore the restriction $\alpha_n\colon D_n\to V_*$ is continuous with respect to $d_{*,\omega}$ and thus has a continuous extension $\beta_n\colon[0,\infty]\to\check V_{*,\omega}$. Set $s_0=0$ and
\[ s_n = \max\{ t \in [0,\infty] \,:\, \beta_k = \beta_n \text{ on } [0,t] \text{ for all } k\ge n \}. \]
Then we have $s_0\le s_1 \le \dotsb$ and $\beta_n(s_n)\to x$, and
\[ \beta\colon[0,\infty]\to\check V_{*,\omega}, \quad
	\beta(t) = \begin{cases}
				u_1 & \text{if } t=0, \\
				\beta_n(t) & \text{if } s_{n-1}<t\le s_n, \\
				x & \text{otherwise,}
			\end{cases} \]
is a continuous curve connecting $u_1$ and $x$ (whose image is a metric segment). Finally, $(\check V_{*,\omega}, \check d_{*,\omega})$ is compact for $\omega\in\Omega'''$, since it is the completion of the totally bounded metric space $(V_*,d_{*,\omega})$.
\end{proof}

Let $\omega$ be an element of the set $\Omega'''$ defined in the previous proof and let $(\check V_{*,\omega}, \check d_{*,\omega})$ be the Cauchy completion of $(V_*, d_{*,\omega})$. Consider an element $x\in\check V_{*,\omega}$. Suppose that $x_1,x_2,\dotsc$ is a Cauchy sequence in $(V_*,d_{*,\omega})$, such that $x_n\to x$ with respect to $\check d_{*,\omega}$. Then it is easy to see that $x_1,x_2,\dotsc$ is also a Cauchy sequence in $(V_*, \norm{\,\cdot\,}_2)$ and thus has a limit in $(K, \norm{\,\cdot\,}_2)$, which does not depend on the specific Cauchy sequence but only on $x\in \check V_{*,\omega}$. We write $\xi_\omega(x)$ to denote this limit in $(K, \norm{\,\cdot\,}_2)$. Then $\xi_\omega\colon \check V_{*,\omega}\to K$ is a well-defined, continuous map, such that the restriction $\xi_\omega|_{V_*}$ to $V_*$ is the identity.

\begin{lemma}\label{lemma:mult}
Let $\omega$ be in $\Omega'''$. Then $1\le\card{\xi_\omega^{-1}(x)}\le4$ for all $x\in V_*$ and $1\le\card{\xi_\omega^{-1}(x)}\le3$ for all $x\in K\setminus V_*$.
\end{lemma}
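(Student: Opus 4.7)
The lower bound $\card{\xi_\omega^{-1}(x)} \ge 1$ follows from surjectivity of $\xi_\omega$: since $\check V_{*,\omega}$ is compact by Theorem~\ref{theorem:randommetric}, its continuous image $\xi_\omega(\check V_{*,\omega})$ is a closed subset of $K$ that contains $V_*$, and hence equals $K$ by density of $V_*$.

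For the upper bound, fix $x \in K$ and pairwise distinct $y_1, \dotsc, y_k \in \xi_\omega^{-1}(x)$, and represent each $y_i$ by a Cauchy sequence $(a_j^{(i)})_{j \ge 1}$ in $(V_*, d_{*,\omega})$ whose Euclidean limit is $x$. For every $n \ge 0$ let $H_n \subseteq K$ be the union of those $n$\ndash parts of $K$ that contain $x$, and let $\partial H_n$ denote the set of vertices of $H_n$ that also belong to an $n$\ndash part not contained in $H_n$. A direct case analysis on whether $x$ belongs to $K \setminus V_*$, is a corner of $G_0$, or is a junction vertex of $V_*$ shows that for all $n$ sufficiently large,
\[ \card{\partial H_n} \le \begin{cases} 3 & \text{if } x \in K \setminus V_*, \\ 4 & \text{if } x \in V_*, \end{cases} \]
since in the first case $H_n$ is a single $n$\ndash part whose three corners are eventually all junction vertices, whereas in the second case $H_n$ is either a single $n$\ndash part contributing only two junction corners (when $x$ is a corner of $G_0$) or a union of two $n$\ndash parts sharing $x$ and contributing at most four non-$x$ corners shared with the rest of $K$.

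The heart of the proof is the combinatorial bound that $T_\infty \cap H_n$ has at most $\card{\partial H_n}$ connected components. Indeed, any component $C$ disjoint from $\partial H_n$ would consist entirely of vertices that are interior to $H_n$; the neighbours of such vertices in the ambient graph all lie inside $H_n$, so no edge of $T_\infty$ could leave $C$, contradicting connectivity of the spanning tree $T_\infty$. This ``no floating component'' step is the main obstacle.

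To conclude, we invoke the tree-distance estimates established in the proof of Theorem~\ref{theorem:randommetric}, ultimately coming from the Galton-Watson bounds of Section~\ref{section:lerw}: for $\omega \in \Omega'''$ and $n$ sufficiently large, any two vertices lying in a common component of $T_\infty \cap H_n$ have $d_{*,\omega}$-distance bounded by a quantity that tends to $0$ as $n \to \infty$. Therefore if the tails of $(a_j^{(i)})$ and $(a_j^{(i')})$ shared a component of $T_\infty \cap H_n$ for arbitrarily large $n$, we would obtain $d_{*,\omega}(y_i, y_{i'}) = 0$ and hence $y_i = y_{i'}$, a contradiction. Because every component of $T_\infty \cap H_{n+1}$ is contained in a component of $T_\infty \cap H_n$, any pair of sequences distinguished at some level remains distinguished at all larger levels, so we may choose a single level $N$ at which the $k$ tails occupy pairwise distinct components of $T_\infty \cap H_N$. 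Combined with the combinatorial bound, this yields $k \le \card{\partial H_N}$, establishing the desired upper bounds.
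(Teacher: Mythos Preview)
Your proof is correct, and it takes a somewhat different route from the paper's. The paper's argument is terse: it observes that each preimage $y_i$ determines a unique metric segment from $u_1$ to $y_i$ in the $\R$\ndash tree $\check V_{*,\omega}$, and then asserts, ``using the geometry of the Sierpi\'nski gasket'', that there are at most four (respectively three) such segments. In effect the paper counts \emph{approach directions} to $x$ in $K$ via the $\R$\ndash tree structure, leaving the geometric case analysis to the reader. Your proof replaces that appeal to geometry by an explicit combinatorial mechanism: you introduce the neighbourhood $H_n$ of $x$, bound the number of components of the restricted spanning tree by $\card{\partial H_n}$ using connectivity of $T_m$, and then separate distinct preimages into distinct components via the diameter estimates from Theorem~\ref{theorem:randommetric}. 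This makes the origin of the constants $3$ and $4$ transparent and avoids invoking uniqueness of geodesics in the limit space; the price is that you must keep track of what happens across levels $m$ and $n$, which the paper sidesteps by working directly in the completion.

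One phrasing issue worth tightening: when you conclude that ``the $k$ tails occupy pairwise distinct components of $T_\infty \cap H_N$'', you are implicitly using that for each fixed $i$ the tail of $(a_j^{(i)})$ does not straddle several components. You do not actually need this. It is enough to fix a single large index $J$ (chosen after $N$) so that each $a_J^{(i)}$ lies in $H_N$ and so that, for every pair $i\ne i'$, the elements $a_J^{(i)}$ and $a_J^{(i')}$ are already in different components; then apply the pigeonhole bound to the $k$ points $a_J^{(1)},\dotsc,a_J^{(k)}$. Your preceding sentence (``distinguished at some level implies distinguished at all larger levels'') already gives everything needed for this refinement.
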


\begin{proof}
For every point $x\in K$ we can find a sequence in $V_*$ that converges to this point in $(K, \norm{\,\cdot\,}_2)$ and which is Cauchy in $(V_*,d_{*,\omega})$. Thus the map $\xi_\omega$ is surjective, whence $\card{\xi_\omega^{-1}(x)}\ge1$. As in the previous proof every sequence $x_1,x_2,\dotsc\in V_*$ converging to a point in $\xi_\omega^{-1}(x)$ in $(V_*,d_{*,\omega})$ yields a metric segment connecting $u_1$ and that point. Using the geometry of the Sierpi\'nski gasket it is easy to see that there are at most four (respectively three if $x\notin V_*$) distinct metric segments joining $u_1$ and a point in $\xi_\omega^{-1}(x)$. This proves the claim.
\end{proof}

\begin{theorem}\label{theorem:limit}
Let $\omega$ be an element of $\Omega'''$. Then the hitting time $\hit(X(\omega))$ of the limit curve $X(\omega)$ in $u_2$ is equal to the distance $d_{*,\omega}(u_1,u_2)$. Furthermore, if $\gamma_\omega\colon[0,d_{*,\omega}(u_1,u_2)]\to\check V_{*,\omega}$ is the unique isometric embedding with $\gamma_\omega(0)=u_1$ and $\gamma_\omega(d_{*,\omega}(u_1,u_2))=u_2$, then
\[ X(t,\omega) = \xi_\omega(\gamma_\omega(t)) \]
for all $t\in[0,d_{*,\omega}(u_1,u_2)]$.
\end{theorem}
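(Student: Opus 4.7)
The plan is as follows. For the first claim, observe that $\hit(X_m) = \bar\alpha^{-m} d_{T_m}(u_1,u_2)$ by the very definition $X_m = \LI(u_1 T_m u_2, \bar\alpha^m)$, and the right hand side tends to $d_{*,\omega}(u_1,u_2)$ by the definition of $d_{*,\omega}$. It therefore suffices to identify $\hit(X)$ with the limit $\ltime_{1,0} = \lim_m \hit(X_m)$. The inequality $\hit(X) \le \ltime_{1,0}$ is immediate from the uniform convergence $X_m \to X$, since every $X_m$ is constantly $u_2$ on $[\hit(X_m),\infty]$, so that $X(t) = u_2$ for every $t > \ltime_{1,0}$; together with $X(\ltime_{1,0}) = u_2$, which is part of Theorem~\ref{theorem:conv}, this gives $\hit(X) \le \ltime_{1,0}$. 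For the reverse inequality I would invoke the self-avoidance established in Theorem~\ref{theorem:prop}: since $X(\ltime_{1,0}) = u_2$ and $X$ is self-avoiding on $[0,\hit(X)]$, the value $u_2$ is attained only at $\hit(X)$, hence $\ltime_{1,0} \ge \hit(X)$.

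For the second claim, I would first show an isometry property on the dense set
\[ \ltime = \{\ltime_{j,n} \,:\, n \ge 0,\; 0 \le j \le \Delta_n(X_n)\} \subseteq [0,\hit(X)]. \]
Given $s \le t$ in $\ltime$, choose $n$ large enough that $s = \ltime_{i,n}$ and $t = \ltime_{j,n}$ for some $i \le j$ (this is possible because vertex sets and traced paths are nested under refinement). For every $m \ge n$, the unique path from $u_1$ to $u_2$ in the tree $T_m$ passes through $X(s) = X_m(\time_{i,n}(X_m))$ and $X(t) = X_m(\time_{j,n}(X_m))$ in this order, because $T_n$ is the trace of $T_m$. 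Therefore
\[ \bar\alpha^{-m} d_{T_m}(X(s), X(t)) = \time_{j,n}(X_m) - \time_{i,n}(X_m), \]
and letting $m \to \infty$ yields $d_{*,\omega}(X(s), X(t)) = \ltime_{j,n} - \ltime_{i,n} = t - s$.

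Next, since $\ltime$ is dense in $[0,\hit(X)]$ (see the remark following Theorem~\ref{theorem:prop} together with Lemma~\ref{lemma:times}), the isometric map $\ltime \to V_*$, $t \mapsto X(t)$, extends uniquely to an isometric embedding $\beta\colon [0,\hit(X)] \to \check V_{*,\omega}$ with $\beta(0) = u_1$ and $\beta(\hit(X)) = u_2$. Because $\check V_{*,\omega}$ is an $\R$\ndash tree, the isometric embedding of an interval connecting two prescribed endpoints is unique, so $\beta = \gamma_\omega$. Finally, $\xi_\omega \circ \gamma_\omega = X$ on $\ltime$ (since $\gamma_\omega(t) = X(t) \in V_*$ there and $\xi_\omega|_{V_*}$ is the identity), and both sides are continuous maps from $[0,\hit(X)]$ to $K$, so they coincide everywhere.

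The main obstacle is the first step, namely verifying that the tree path in $T_m$ really does pass through the level-$n$ waypoints $X(\ltime_{i,n})$ and $X(\ltime_{j,n})$ in the correct order, and that the length of the corresponding subpath equals the number of edges of $u_1 T_m u_2$ traversed between times $\bar\alpha^m \time_{i,n}(X_m)$ and $\bar\alpha^m \time_{j,n}(X_m)$. This is ultimately a consequence of the trace construction of Section~\ref{section:trees} and the fact that $\Tr^m_n T_m = T_n$, but it needs to be spelled out carefully because the path lengths in $T_m$ are what the metric actually measures. Once this is in place, the remaining arguments are formal: density of $\ltime$, unique extension of isometries to the Cauchy completion, and uniqueness of geodesic embeddings in an $\R$\ndash tree.
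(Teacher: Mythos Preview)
Your proposal is correct and follows essentially the same route that the paper has in mind. The paper's own proof is a single sentence (``a consequence of the definition of the limit curve $X(\omega)$ and the limit metric $d_{*,\omega}$, see Theorem~\ref{theorem:conv} and Theorem~\ref{theorem:randommetric}''), and what you have written is precisely the careful unfolding of that sentence: identifying $\hit(X)$ with $\ltime_{1,0}$ via Theorems~\ref{theorem:conv} and~\ref{theorem:prop}, establishing the isometry $d_{*,\omega}(X(s),X(t))=t-s$ on the dense set $\ltime$ from the trace relation $\Tr^m_n T_m=T_n$, and then invoking uniqueness of geodesics in the $\R$\ndash tree $\check V_{*,\omega}$ from Theorem~\ref{theorem:randommetric}. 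Your remark that the key step is checking that the $T_m$\ndash path really visits the level\ndash$n$ waypoints in order with the correct edge count is exactly right, and your justification via the trace construction is sound; note that the same isometry observation already appears implicitly in the proof of Theorem~\ref{theorem:randommetric} (``$t=d_{*,\omega}(u_1,\alpha_n(t))$ for all $t\in D_n$\dots'').
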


\begin{proof}
The statement is a consequence of the definition of the limit curve $X(\omega)$ and the limit metric $d_{*,\omega}$, see Theorem~\ref{theorem:conv} and Theorem~\ref{theorem:randommetric}.
\end{proof}

For $\omega\in\Omega'''$ define $A(\omega)$ to be the set $\{ x \in K \,:\, \card{\xi_\omega^{-1}(x)} > 1 \}$. These are points that ``can be reached from two (or more) different directions''. To understand how this happens, it is useful to consider spanning forests with two components: given for instance some $f \in \mathcal S_{\infty}^1$, every element $v$ of $V_*$ can be associated uniquely to one of the components: $v \in V(G_n)$ for some $n$, and $v$ either belongs to the same component as $u_1$ in $\Tr_m^{\infty} f$ for all $m \geq n$ or to the same component as $u_2$ and $u_3$, again for all $m \geq n$. There are, however, some points in the completion $K$ that can be reached as limits from both sides; they form the so-called ``interface''. In a spanning tree, there is only one component, but the same phenomenon can occur at higher levels, within certain $n$\ndash parts on which the spanning tree induces a spanning forest with more than one component.

In the following we give a description of $A(\omega)$ in terms of Galton-Watson trees and show that the Hausdorff dimension $\dim_H A(\omega)$ is strictly less than $1$ for almost all $\omega$. For $f\in \mathcal Q_\infty$ and $n\ge0$ let $\check W_n(f)$ be the set of all $w\in\Words^n$, such that $\psi_w(VG_0)$ contains vertices of two distinct components of $\Tr^\infty_n f$. The union
\[ \check W(f) = \bigcup_{n\ge0} \check W_n(f) \]
induces a subtree of $\Words^*$. On a single $n$\ndash part $\psi_w(VG_0)$ with $w\in\check W_n(f)$ we always observe one of the following possibilities:
\begin{itemize}
\item The restriction $\pi_w(\Tr^\infty_n f)$ has two components and
	these two components belong to two distinct components of $\Tr^\infty_n f$.
	In this case we set $\check\type_w(f) = \type_w(f) \in \{\comp1yy-,\comp2yy-,\comp3yy-\}$.
\item The restriction $\pi_w(\Tr^\infty_n f)$ has three components and
	two of them belong to the same component of $\Tr^\infty_n f$.
	In this case we define $\check\type_w(f) \in \{\comp4ynn,\comp4nyn,\comp4nny\}$
	depending on which two of the three components in $\pi_w(\Tr^\infty_n f)$
	belong to the same component of $\Tr^\infty_n f$.
\item The restriction $\pi_w(\Tr^\infty_n f)$ has three components and
	these three components belong to three distinct components of $\Tr^\infty_n f$.
	In this case we set $\check\type_w(f) = \type_w(f) = \comp4yyy$.
\end{itemize}
Let $\mathcal{\check C} = \{\comp1yy-,\comp2yy-,\comp3yy-,\comp4ynn,\comp4nyn,\comp4nny,\comp4yyy\}$ and set
\[ \mathbold{\check\type}(f) = (\check\type_w(f))_{w\in\check W(f)}. \]
As in Section~\ref{subsection:component} it is easy to see that $\mathbold{\check\type}(U_\infty)$ is a labelled multi-type Galton-Watson tree with types in $\mathcal{\check C}$, where $U_\infty$ is one of $S_\infty^1, S_\infty^2, S_\infty^3, R_\infty^{}$. The associated counting process $(\mathbold{\check c^\num}(U_\infty))_{n\ge0}$, which counts type occurrences in one generation up to symmetry, is a multi-type Galton-Watson process with three types, offspring generating function
\begin{align*}
\mathbold{\check g}(\mathbold z) = \Bigl(
		&\tfrac1{10} (4z_1+3z_1^2+3z_2), \\
		&\tfrac1{25} (6z_1+3z_1^2+z_1^3+6z_2+9z_1z_2), \\
		&\tfrac1{25} z_1(3z_1+4z_1^2+9z_2+9z_3)
	\Bigr)
\end{align*}
and mean matrix
\[ \mathbold{\check M} = \frac1{50} \cdot \begin{pmatrix}
			50 & 15 &  0 \\
			48 & 30 &  0 \\
			72 & 18 & 18
		\end{pmatrix}. \]
This mean matrix has the dominating eigenvalue $\check\alpha = \frac35\bar\alpha = \frac45 + \frac1{25}\sqrt{205} \approx 1.372712$. Define
\[ I(f) = \bigcap_{n\ge0} \bigcup_{w\in\smash{\check W_n(f)}} \psi_w(K). \]
Then $I(f)$ is the limit set of the component boundaries and
\[ \dim_H I(U_\infty) \le \frac{\log\check\alpha}{\log2}
	= \frac{\log\bar\alpha}{\log2} - \frac{\log\frac53}{\log2} \approx 0.457029 \]
holds almost surely using \cite[Proposition~3.9]{tsujii1991markov}. It seems that other results on the Hausdorff dimension do not apply to this specific random recursive construction, so that we only obtain an upper bound. Of course, $I(T_\infty)=\emptyset$ and so $\dim_H I(T_\infty)=0$.

\begin{proposition}\label{proposition:interface}
For $\omega\in\Omega'''$ we have
\[ A(\omega) = \bigcup_{w\in\Words^*} \psi_w(I(\pi_w(T_\infty(\omega)))) \]
and thus
\[ \dim_H A(\omega) \le \frac{\log\check\alpha}{\log2}
	= \frac{\log\bar\alpha}{\log2} - \frac{\log\frac53}{\log2} \approx 0.457029 \]
for almost all $\omega$.
\end{proposition}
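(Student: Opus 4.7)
The Hausdorff-dimension bound is a formal consequence of the set equality. For each fixed word $w$, conditional on its type, $\pi_w(T_\infty)$ is distributed as one of $T_\infty,T_\infty^i,S_\infty^i,R_\infty^{}$, so by the random-recursive-construction estimate \cite[Proposition~3.9]{tsujii1991markov} applied to the multi-type Galton-Watson tree $\mathbold{\check\type}(\pi_w(T_\infty))$ we have $\dim_H I(\pi_w(T_\infty))\le\log\check\alpha/\log2$ almost surely. The countable union over $w\in\Words^*$ is harmless, each $\psi_w$ is a similarity, and Hausdorff dimension is stable under countable unions, so the bound transfers to $A(\omega)$.

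Both inclusions of the set equality rest on one geometric observation: whenever $\pi_w(T_\infty)$ is a proper spanning forest, each of its components is attached to the exterior of $\psi_w(K)$ only through those corners in $\psi_w(VG_0)$ that the component contains. The unique path in $T_\infty$ joining two vertices from different components of $\pi_w(T_\infty)$ must therefore leave $\psi_w(K)$ through one corner and re-enter through another; the scaled length of any such corner-to-corner exterior detour is a strictly positive random number depending only on $w$ (by the almost sure positivity of the $\bar\theta_i$ from Proposition~\ref{proposition:tree2}(3)), and independent of where inside $\psi_w(K)$ the endpoints lie.

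For $\supseteq$, given $y\in I(\pi_w(T_\infty))$ and $x=\psi_w(y)$, let $H_1,H_2$ be two distinct components of $\pi_w(T_\infty)$. The defining nested chain $\psi_{v_n}(K)\ni y$ with $v_n\in\check W_n(\pi_w(T_\infty))$ furnishes, at every scale, corner vertices belonging to both $H_1$ and $H_2$; selecting them consistently through a diagonal argument (using that each component $H_i$ is itself a Galton-Watson subtree to which Theorem~\ref{theorem:randommetric} applies) produces Cauchy sequences $a_n=\psi_w(p_n)\in H_1$ and $b_n=\psi_w(q_n)\in H_2$ Euclidean-converging to $x$. The observation forces $d_{*,\omega}(a_n,b_n)\ge c(w,\omega)>0$ uniformly in $n$, so $(a_n)$ and $(b_n)$ represent distinct preimages of $x$ under $\xi_\omega$ and $x\in A(\omega)$. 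For $\subseteq$, let $x\in A(\omega)$ be witnessed by inequivalent Cauchy sequences $(a_n),(b_n)\subseteq V_*$ with $d_{*,\omega}(a_n,b_n)\ge\epsilon$. Since at most three nested chains of $n$\ndash parts contain $x$, after extracting subsequences both sequences eventually lie in a common chain $\psi_{w_n}(K)\ni x$. If $\psi_{w_n}^{-1}(a_n)$ and $\psi_{w_n}^{-1}(b_n)$ were in the same component of $\pi_{w_n}(T_\infty)$, the path joining them would stay inside $\psi_{w_n}(K)$ with scaled length $O(\bar\alpha^{-|w_n|})\to0$, contradicting the lower bound; so they lie in different components for all large $n$. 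Take $w$ to be any prefix from which the separation first becomes visible; then $y=\psi_w^{-1}(x)$ lies in the Euclidean closures of at least two components of $\pi_w(T_\infty)$ at every finer scale, which via the definition of $\check W_n$ translates exactly into $y\in I(\pi_w(T_\infty))$.

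The main obstacle is the forward direction: ensuring that $(a_n)$ and $(b_n)$ can be chosen simultaneously Cauchy inside their respective components while still enjoying a uniform positive lower bound on $d_{*,\omega}(a_n,b_n)$. Both features hinge on the bottleneck structure of the components of $\pi_w(T_\infty)$, which communicate with the outside of $\psi_w(K)$ only through the three fixed corner vertices, so any cross-component tree path is forced to contribute a detour whose scaled length is bounded below by a positive constant independent of where inside $\psi_w(K)$ its endpoints happen to lie.
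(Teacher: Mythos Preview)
Your approach is sound and reaches the same conclusion, but it differs from the paper's. For the inclusion $A(\omega)\subseteq\bigcup_w\psi_w(I(\pi_w(T_\infty)))$, the paper exploits the $\R$\ndash tree structure of $\check V_{*,\omega}$ directly: given distinct $x_1,x_2\in\xi_\omega^{-1}(x)$, the metric segments $\overline{u_1x_1}$ and $\overline{u_1x_2}$ coincide up to a branch point $p$ and then diverge, so one can choose $w$ with $x\in\psi_w(K)$ small enough that $\overline{u_1x_1}\cap\overline{u_1x_2}\cap\xi_\omega^{-1}(\psi_w(K))=\emptyset$; the two segments then enter $\psi_w(K)$ through different corners, forcing $\psi_w^{-1}(x)$ to lie in $\check W_n(\pi_w(T_\infty))$ at every scale. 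Your Cauchy-sequence/detour argument achieves the same end without invoking the $\R$\ndash tree classification, at the cost of more bookkeeping; the paper's route is shorter because it packages the ``different components at every scale'' conclusion into the single geometric fact that geodesics in an $\R$\ndash tree branch exactly once.

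Two imprecisions in your write-up are worth flagging. First, the claimed bound $O(\bar\alpha^{-|w_n|})$ on the $d_{*,\omega}$-distance between two vertices in the same component of an $|w_n|$-part is not available uniformly in $w_n$ (the implicit constant is random and not uniformly bounded over all parts at a given level); what you actually need, and what the total-boundedness estimate in the proof of Theorem~\ref{theorem:randommetric} supplies, is a bound $\le 2\cdot 2^{-|w_n|}$ once $|w_n|\ge N(\omega)$, which suffices. Second, the assertion that both subsequences eventually lie in a \emph{common} chain of parts can fail when $x\in V_*$, since two distinct chains meet at such a point; this is easily repaired by comparing each of $(a_n),(b_n)$ to the constant sequence $x$ instead (at least one of $d_{*,\omega}(a_n,x)$, $d_{*,\omega}(b_n,x)$ is bounded below along a subsequence), after which your argument runs inside a single chain.
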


\begin{proof}
Note that $A(\omega)$ contains $\psi_w(I(\pi_w(T_\infty(\omega))))$ for all $w\in\Words^*$. On the other hand, if $x\in A(\omega)$, then $\xi_\omega^{-1}(x)$ contains at least two distinct points in $\check V_{*,\omega}$, say $x_1$ and $x_2$. Denote by $\overline{u_1x_1}$ (respectively $\overline{u_1x_2}$) the metric segment connecting $u_1$ and $x_1$ (respectively $x_2$). Then there is a word $w\in\Words^*$ such that $x\in\psi_w(K)$ and
\[ \overline{u_1x_1} \cap \overline{u_1x_2} \cap \xi_\omega^{-1}(\psi_w(K)) = \emptyset. \]
This implies that $x\in\psi_w(I(\pi_w(T_\infty(\omega))))$. The usual conditioning argument shows that
\[ \dim_H \psi_w(I(\pi_w(T_\infty(\omega)))) \le \frac{\log\check\alpha}{\log2} \]
for almost all $\omega$. As $\Words^*$ is a countable set and the Hausdorff dimension behaves nicely under countable unions the claim follows.
\end{proof}

\begin{remark}
Note the occurrence of the constant $\frac53$, which is the \emph{resistance scaling factor} of the Sierpi\'nski gasket. It also occurs prominently in the formula for the number of spanning trees (see \cite{teufl2011resistance} for the connection between resistance scaling and the number of spanning trees): if we regard $G_n$ as an electrical network, where each edge represents a unit resistor, then the effective resistance between two of the boundary vertices $u_1,u_2,u_3$ is $\frac23 \cdot (\frac53)^n$. There is a simple heuristic explanation why the identity
\[ \log\check\alpha = \log\bar\alpha - \log\tfrac53 \]
must hold: it is well known (cf.~\cite[p.~44, Theorem~1]{bollobas1998modern}) that the effective resistance between two vertices equals the number of \emph{thickets}, i.e., spanning forests with two components each containing one of the two vertices, divided by the number of spanning trees. For every spanning tree of $G_n$, one can obtain a thicket by removing an edge from the unique path between $u_1$ and $u_2$; conversely, we can turn a thicket into a spanning tree by inserting an edge that connects the two components at the interface. The identity now follows (at least heuristically) from a simple double-counting argument.
\end{remark}

\section{Other self-similar graphs}
\label{sec:other}

The same ideas apply to other self-similar graphs as well: it was shown in \cite{teufl2011number} that the recursions for counting spanning trees and forests in self-similar sequences of graphs have simple explicit solutions as for the Sierpi\'nski graphs if the number of ``boundary'' vertices is two (as for example in the case of the graphs associated with the modified Koch curve, see Figure~\ref{figure:koch}) or three (as for the Sierpi\'nski graphs), provided that the automorphism group acts with either full symmetry or like the alternating group on the set of boundary vertices. For two boundary vertices, this technical condition is always satisfied. The explicit counting formulae guarantee that the projections will still be measure-preserving, and all other arguments can be carried out in the same way as in the previous sections.

\begin{figure}[htb]
\centering
\def\fig#1#2#3#4{%
	\scope[shift={#1}]
		\mytikzkoch{}{#2}{#3}
		\node[above left] at (60:1/2) {#4};
	\endscope}
\begin{tikzpicture}[scale=2.5]
	\fig{(0,0)}{}{\draw (0:0) node[vertex] {} -- (0:1) node[vertex] {};}{$G_0$}
	\fig{(1.2,0)}{x}{\draw (0:0) node[vertex] {} -- (0:1) node[vertex] {};}{$G_1$}
	\fig{(2.4,0)}{xx}{\draw (0:0) node[vertex] {} -- (0:1) node[vertex] {};}{$G_2$}
	\fig{(4,0)}{xxxx}{\draw (0:0)  -- (0:1);}{$K$}
\end{tikzpicture}
\caption{The modified Koch curve.}
\label{figure:koch}
\end{figure}

For two boundary vertices, the rescaling factor is precisely the average length of loop-erased random walk from one boundary vertex to the other in $G_1$ (the initial graph $G_0$ being a single edge), which is always a rational number. For example, for the sequence of graphs in Figure~\ref{figure:koch}, the rescaling constant is $\frac{10}{3}$ (in other words, the length of loop-erased random walk from one boundary vertex of $G_n$ to the other grows like $(\frac{10}{3})^n$). It follows that the Hausdorff dimension of the limit curve is almost surely $\log(\frac{10}{3})/\log3 \approx 1.095903274$ in this example. As a second example, consider the Sierpi\'nski graphs with two subdivisions on each edge in Figure~\ref{figure:sg2}: in this case, we find that the rescaling factor is $\frac1{735}(1431+\sqrt{1669656}\,)$ (it is a priori clear that it has to be algebraic of degree $\leq 2$, being an eigenvalue of a $2 \times 2$\ndash matrix with rational entries), giving us a Hausdorff dimension of $\approx 1.192117286$ for the limit curve of loop-erased random walk.

\begin{figure}[htb]
\centering
\def\fig#1#2#3#4{%
	\scope[shift={#1}]
		\mytikzsgb{}{#2}{#3}
		\node[above left] at (60:1/2) {#4};
	\endscope}
\begin{tikzpicture}[scale=2.5]
	\fig{(0,0)}{}{\draw \mytikztri{--}{node[vertex] {}} -- cycle;}{$G_0$}
	\fig{(1.2,0)}{x}{\draw \mytikztri{--}{node[vertex] {}} -- cycle;}{$G_1$}
	\fig{(2.4,0)}{xx}{\draw \mytikztri{--}{node[vertex] {}} -- cycle;}{$G_2$}
	\fig{(4,0)}{xxxx}{\fill \mytikztri{--}{} -- cycle;}{$K$}
\end{tikzpicture}
\caption{Sierpi\'nski graphs with two subdivisions.}
\label{figure:sg2}
\end{figure}

If the number of boundary vertices is four or more (which happens, for instance, for the higher-dimensional analogues of the Sierpi\'nski graphs), then more different types of spanning forests have to be considered, and there are generally no exact counting formulae. However, asymptotic formulae should hold in such cases, making the projections ``asymptotically measure-preserving'', so that analogous results hold in such cases. The details might be quite intricate though, and new geometric phenomena arise as well: for instance, with four boundary vertices, it becomes possible that a loop-erased random walk on $G_n$ enters and leaves some of the copies of $G_k$ ($k < n$) more than once, which is not possible in the case of Sierpi\'nski graphs that we considered.

\def\doi#1{\href{http://dx.doi.org/#1}{\protect\nolinkurl{doi:#1}}}
\bibliographystyle{amsplainurl}
\bibliography{cfg}

\parindent=0pt


\end{document}